\providecommand{\U}[1]{\protect\rule{.1in}{.1in}}
\newtheorem{theorem}{Theorem}
\theoremstyle{plain}
\newtheorem{conjecture}[theorem]{Conjecture}
\newtheorem{definition}[theorem]{Definition}
\newtheorem{notation}[theorem]{Notation}
\newtheorem{proposition}[theorem]{Proposition}
\newtheorem{remark}[theorem]{Remark}
\numberwithin{equation}{section}
\begin{document}
\title[Kakeya conjectures]{Equivalence of linear and trilinear Kakeya conjectures in three dimensions}
\author{Cristian Rios}
\address{University of Calgary\\
Calgary, Alberta, Canada}
\email{crios@ucalgary.ca}
\author{Eric Sawyer}
\address{McMaster University\\
Hamilton, Ontario, Canada}
\email{sawyer@mcmaster.ca}
\thanks{Eric Sawyer's research supported in part by a grant from the National Sciences
and Engineering Research Council of Canada}
\maketitle

\begin{abstract}
We prove the equivalence of two Kakeya conjectures in three dimensions:

\begin{enumerate}
\item The Kakeya maximal operator conjecture

\item The \emph{disjoint} trilinear dual form of the Kakeya maximal operator conjecture

\end{enumerate}
\end{abstract}
\tableofcontents

\section{Introduction}

The purpose of this paper is to prove the equivalence of the following two
Kakeya conjectures in three dimensions, which are described in detail below:

\begin{enumerate}
\item the dual Kakeya maximal operator conjecture,

\item the disjoint trilinear dual Kakeya maximal operator conjecture.
\end{enumerate}

The bilinear analogue of this equivalence was obtained in Tao, Vargas and Vega
\cite[Theorem 3.1]{TaVaVe} in all dimensions equal or greater than
two\footnote{It is possible that the bilinear proof in \cite{TaVaVe} can be
adapted to the \emph{disjoint} trilinear setting, but the Whitney condition in
three dimensions complicates matters, and we will not pursue this approach
here, opting instead to adapt the easier part of the Bourgain and Guth
argument from \cite[Section 2]{BoGu}.}. Of course the trilinear equivalence
implies the bilinear equivalence by H\"{o}lder's inequality.

We give the precise statement in Theorem \ref{big} below, after recalling and
introducing some standard definitions.

\begin{definition}
For $0<\delta<1$, a $\delta$-tube $T$ is a tube with length $1$ and
cross-sectional diameter $\delta$. The \emph{orientation} of a tube $T$ is the
unit vector $\mathbf{v}\left(  T\right)  $ in the direction of the centerline
of $T$ (up to $\pm1$).
\end{definition}

\begin{definition}
A family $\mathbb{T}$ of $\delta$-tubes is $\delta$-separated if $\left\vert
\mathbf{v}\left(  T\right)  -\mathbf{v}\left(  T^{\prime}\right)  \right\vert
\geq\delta$ for all $T,T^{\prime}\in\mathbb{T}$.
\end{definition}

Here is the main dual form of the Kakeya maximal operator inequality we are
interested in, which we denote by $\mathcal{K}^{\ast}\left(  \otimes
_{1}L^{\infty}\rightarrow L^{\frac{3}{2}};\varepsilon\right)  $.

\begin{definition}
Let $0<\varepsilon<1$. We say the statement $\mathcal{K}^{\ast}\left(
\otimes_{1}L^{\infty}\rightarrow L^{\frac{3}{2}};\varepsilon\right)  $ holds
if there is a positive constant $C_{\varepsilon}$ such that%
\begin{align}
&  \left\Vert \sum_{T\in\mathbb{T}}\mathbf{1}_{T}\right\Vert _{L^{\frac{3}{2}%
}\left(  \mathbb{R}^{3}\right)  }\leq C_{\varepsilon}\delta^{-\varepsilon
},\label{Kak dual}\\
&  \text{for all families }\mathbb{T}\text{ of }\delta\text{-separated }%
\delta\text{-tubes in }\mathbb{R}^{3}\text{ and }0<\delta<1.\nonumber
\end{align}

\end{definition}

Here is the main conjecture regarding the Kakeya inequality $\mathcal{K}%
^{\ast}\left(  \otimes_{1}L^{\infty}\rightarrow L^{\frac{3}{2}};\varepsilon
\right)  $. The Kakeya \emph{set} conjecture in $\mathbb{R}^{3}$ (logically
weaker than the Kakeya maximal operator conjecture) has been recently proved
by Hong Wang and Joshua Zahl \cite{WaZa} after being open for decades, and
represents a significant step forward in the understanding of Kakeya theory.

\begin{conjecture}
[Dual form of the Kakeya maximal operator conjecture in $\mathbb{R}^{3}$%
]\label{linear Kakeya}The statement $\mathcal{K}^{\ast}\left(  \otimes
_{1}L^{\infty}\rightarrow L^{\frac{3}{2}};\varepsilon\right)  $ holds for all
$0<\varepsilon<1$.
\end{conjecture}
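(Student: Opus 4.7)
The final statement is the celebrated Kakeya maximal operator conjecture in $\mathbb{R}^{3}$, which is a long-standing open problem; what follows is the route implicitly advocated by the present paper rather than a self-contained proof.

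My plan is first to invoke Theorem \ref{big} (still to be stated) to reduce $\mathcal{K}^{\ast}\left(\otimes_{1}L^{\infty}\rightarrow L^{\frac{3}{2}};\varepsilon\right)$ to its \emph{disjoint trilinear} dual analogue. The value of this reduction is that one may then split the tube family $\mathbb{T}$ into three pairwise disjoint subfamilies $\mathbb{T}_{1},\mathbb{T}_{2},\mathbb{T}_{3}$ whose orientation sets lie in three well-separated caps of $S^{2}$, and work with the trilinear product $\prod_{j=1}^{3}\sum_{T\in\mathbb{T}_{j}}\mathbf{1}_{T}$ in place of the single sum in \eqref{Kak dual}. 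Disjointness is crucial because several polynomial-method and induction-on-scales techniques exploit it to avoid double-counting.

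I would then carry out a Bourgain--Guth broad/narrow decomposition at each dyadic spatial scale. In the broad case, where the three subfamilies remain pairwise transverse with a fixed angular separation, the Bennett--Carbery--Tao multilinear Kakeya inequality already delivers the sharp $\delta^{-\varepsilon}$ bound. In the narrow case, where the orientations all cluster near a single great circle of $S^{2}$, I would iterate the argument at a finer scale, using Guth-style polynomial partitioning to absorb the combinatorial losses and feed the improvement back into the induction.

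The main obstacle -- and the reason the conjecture remains open -- is precisely the narrow case: when the tubes concentrate near a $2$-plane, the problem essentially reduces to a planar Kakeya problem inside a $\delta$-thickened slab, and the standard induction on scales loses a factor that destroys the endpoint $L^{\frac{3}{2}}$ bound. Overcoming this would require a genuinely new structural dichotomy, perhaps in the spirit of the Katz--Tao ``stickiness/planiness/graininess'' heuristics, not merely a rearrangement of existing ingredients. The contribution of the present paper is not to remove this obstacle, but rather to show that the trilinear route, \emph{with} the disjointness restriction, is a fully equivalent reformulation of the linear conjecture, and hence a legitimate target for any polynomial or incidence-geometric attack.
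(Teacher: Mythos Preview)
You correctly identify that this statement is an open conjecture and that the paper does not prove it; the paper's contribution is precisely Theorem~\ref{big}, the equivalence with the disjoint trilinear version. So there is no ``paper's own proof'' of Conjecture~\ref{linear Kakeya} to compare against, and your final paragraph captures the paper's actual claim accurately.

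That said, the attack route you sketch in the middle paragraphs departs from the paper's machinery. The paper does not run a broad/narrow decomposition or polynomial partitioning directly on the Kakeya side. Instead it passes to the Fourier side via modulated single-scale square-function extension conjectures (Conjectures~\ref{ssFsfec} and~\ref{ssFsftec}), proves the linear/trilinear equivalence \emph{there} using a Bourgain--Guth pigeonholing argument adapted to square functions together with parabolic rescaling in the style of Tao--Vargas--Vega, and then transfers back to Kakeya via rapid-decay estimates (Propositions~\ref{Kak equiv} and~\ref{Kak tri equiv}). The Bennett--Carbery--Tao transverse multilinear inequality is invoked only in the concluding remarks, to prune the class of disjoint triples one needs to test, not as the engine of the equivalence. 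Your remarks about the narrow-case obstruction and the Katz--Tao stickiness/planiness heuristics are reasonable general context for why the conjecture remains open, but they lie outside what this paper establishes or attempts.
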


Now we turn to a \emph{trilinear} variant of the Kakeya conjecture.

\begin{definition}
\label{tube disjoint}Let $0<\delta\leq\nu<1$. Three families $\mathbb{T}_{1}$,
$\mathbb{T}_{2}$ and $\mathbb{T}_{3}$ of $\delta$-tubes are said to be $\nu
$\emph{-disjoint} if the orientations of the $\delta$-tubes $T_{k}%
\in\mathbb{T}_{k}$ are contained in spherical patches $\Omega_{k}%
\subset\mathbb{S}^{2}$, that satisfy%
\begin{equation}
\operatorname*{diam}\left(  \Omega_{k}\right)  \approx\nu\text{ and
}\operatorname*{dist}\left(  \Omega_{j},\Omega_{k}\right)  \geq\nu
,\ \ \ \ \ 1\leq j,k\leq3. \label{nu disjoint}%
\end{equation}
We also say that the triple $\left(  \Omega_{1},\Omega_{2},\Omega_{3}\right)
$ is $\nu$\emph{-disjoint} if (\ref{nu disjoint}) holds.
\end{definition}

\begin{definition}
Let $0<\varepsilon,\nu<1$. We say the statement $\mathcal{K}%
_{\operatorname*{disj}\nu}^{\ast}\left(  \otimes_{3}L^{\infty}\rightarrow
L^{\frac{1}{2}};\varepsilon\right)  $ holds if there is a positive constant
$C_{\varepsilon,\nu}$ such that%
\begin{align}
&  \left\Vert \prod_{k=1}^{3}\left(  \sum_{T_{k}\in\mathbb{T}_{k}}%
\mathbf{1}_{T_{k}}\right)  \right\Vert _{L^{\frac{1}{2}}\left(  \mathbb{R}%
^{3}\right)  }\leq C_{\varepsilon,\nu}\delta^{-\varepsilon}%
,\label{tri Kak dual}\\
&  \text{for all }\nu\text{\emph{-disjoint }families }\mathbb{T}_{k}\text{ of
}\delta\text{-separated }\delta\text{-tubes in }\mathbb{R}^{3}\text{ and
}0<\delta\leq\nu.\nonumber
\end{align}

\end{definition}

\begin{conjecture}
[The disjoint trilinear dual form of the Kakeya maximal operator conjecture in
$\mathbb{R}^{3}$]\label{trilinear Kakeya}The statement $\mathcal{K}%
_{\operatorname*{disj}\nu}^{\ast}\left(  \otimes_{3}L^{\infty}\rightarrow
L^{\frac{1}{2}};\varepsilon\right)  $ holds for all $0<\varepsilon,\nu<1$.
\end{conjecture}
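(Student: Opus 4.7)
The plan is to derive Conjecture \ref{trilinear Kakeya} conditionally on the linear conjecture (Conjecture \ref{linear Kakeya}) by a one-line application of H\"older's inequality; an unconditional proof will not be attempted, because in view of the equivalence theorem (Theorem \ref{big}) it would amount to a resolution of the full Kakeya maximal operator conjecture in $\mathbb{R}^{3}$.

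For the conditional argument, fix $0<\varepsilon,\nu<1$, let $(\mathbb{T}_{1},\mathbb{T}_{2},\mathbb{T}_{3})$ be any $\nu$-disjoint triple of families of $\delta$-separated $\delta$-tubes with $0<\delta\le\nu$, and set $g_{k}:=\sum_{T\in\mathbb{T}_{k}}\mathbf{1}_{T}$. Since $\tfrac{1}{3/2}+\tfrac{1}{3/2}+\tfrac{1}{3/2}=2=\tfrac{1}{1/2}$, the trilinear H\"older inequality (valid for the $L^{1/2}$ quasi-norm on the left, since each exponent on the right exceeds $1/2$) yields
\[
\|g_{1}g_{2}g_{3}\|_{L^{1/2}(\mathbb{R}^{3})}\;\le\;\prod_{k=1}^{3}\|g_{k}\|_{L^{3/2}(\mathbb{R}^{3})}.
\]
Applying Conjecture \ref{linear Kakeya} with parameter $\varepsilon/3$ to each factor on the right gives $\|g_{k}\|_{L^{3/2}}\le C_{\varepsilon/3}\delta^{-\varepsilon/3}$, so the product is at most $C_{\varepsilon/3}^{3}\delta^{-\varepsilon}$. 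Setting $C_{\varepsilon,\nu}:=C_{\varepsilon/3}^{3}$ establishes \eqref{tri Kak dual}, and incidentally shows that in this direction the bound may be taken independent of $\nu$.

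Two remarks are in order. First, this derivation makes no use of the $\nu$-disjointness of the three families, confirming that the disjoint trilinear conjecture is \emph{a priori} weaker than the linear one. The substantive content of the paper is the converse direction, proved in Theorem \ref{big}, which shows that the disjointness hypothesis is strong enough to recover the linear bound from the trilinear one. Second, the main obstacle to making the present proposal unconditional is obviously Conjecture \ref{linear Kakeya} itself. A natural --- and presumably very hard --- alternative route would be to prove the trilinear bound directly, for example by combining the Bennett--Carbery--Tao endpoint multilinear Kakeya inequality (which yields a factor of $(1+\log\delta^{-1})^{C}$ in place of $\delta^{-\varepsilon}$, with a $\nu$-dependent constant) with an induction-on-scales or polynomial-method argument that removes the logarithmic loss; by Theorem \ref{big}, such a proof would simultaneously settle the linear conjecture, and this is precisely why the hard step cannot realistically be avoided.
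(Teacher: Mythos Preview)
Your proposal is correct and matches the paper's own treatment: the paper also derives the trilinear conjecture from the linear one by exactly this H\"older argument with exponents $(3,3,3)$, noting (as you do) that the $\nu$-disjointness hypothesis is not needed in this direction. The substantive work of the paper lies entirely in the converse implication, as you rightly point out.
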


See the references below, and their references as well, for background on
Kakeya conjectures, which were pioneered in large part by work of Tom Wolff
and Jean Bourgain. In particular, we point to the later work on
bilinear\ Kakeya equivalences in\ Tao, Vargas and Vega \cite{TaVaVe}, and the
transverse trilinear theorems proved in Bennett, Carbery and Tao \cite{BeCaTa}.

\begin{theorem}
\label{big}The linear Kakeya Conjecture \ref{linear Kakeya} is equivalent to
the \emph{disjoint} trilinear Kakeya Conjecture \ref{trilinear Kakeya}.
Moreover, these Kakeya conjectures are equivalent to their Fourier
counterparts, Conjectures \ref{ssFsfec} and \ref{ssFsftec} below.
\end{theorem}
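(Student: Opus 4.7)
The forward implication, that Conjecture \ref{linear Kakeya} implies Conjecture \ref{trilinear Kakeya}, is immediate from the trilinear H\"older inequality applied with exponents satisfying $\tfrac{1}{1/2}=\tfrac{1}{3/2}+\tfrac{1}{3/2}+\tfrac{1}{3/2}$: for any triple of families $\mathbb{T}_{k}$, with no $\nu$-disjointness assumption needed,
\[
\Bigl\Vert \prod_{k=1}^{3}\sum_{T_{k}\in\mathbb{T}_{k}}\mathbf{1}_{T_{k}}\Bigr\Vert _{L^{1/2}}\leq\prod_{k=1}^{3}\Bigl\Vert \sum_{T_{k}\in\mathbb{T}_{k}}\mathbf{1}_{T_{k}}\Bigr\Vert _{L^{3/2}}\leq C_{\varepsilon/3}^{3}\,\delta^{-\varepsilon},
\]
after invoking the linear statement at $\varepsilon/3$. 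This direction is independent of $\nu$, reflecting the fact that disjointness is precisely what makes the opposite implication nontrivial.

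The substance of the theorem is the converse, which I would attack via a Bourgain--Guth broad/narrow dichotomy. Fix a family $\mathbb{T}$ of $\delta$-separated $\delta$-tubes and a parameter $\nu\in[\delta,1)$ to be optimized. Cover $\mathbb{S}^{2}$ by caps $\{\Omega_{\alpha}\}$ of diameter comparable to $\nu$ and sort the tubes by orientation into $\mathbb{T}=\bigsqcup_{\alpha}\mathbb{T}_{\alpha}$. For $x\in\mathbb{R}^{3}$, call $x$ \emph{broad} if three caps $\Omega_{\alpha_{1}},\Omega_{\alpha_{2}},\Omega_{\alpha_{3}}$ forming a $\nu$-disjoint triple in the sense of (\ref{nu disjoint}) each contribute a comparable fraction of $\sum_{T\ni x}1$; otherwise call $x$ \emph{narrow}, in which case the orientations of the tubes through $x$ are forced to cluster in the $\nu$-neighborhood of a great circle. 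Summing Conjecture \ref{trilinear Kakeya} over the $\lesssim\nu^{-O(1)}$ admissible triples of caps then controls the broad contribution by $C_{\varepsilon,\nu}\,\delta^{-\varepsilon}$ in $L^{3/2}$.

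On the narrow piece, the tubes contributing at each narrow point have orientations in a $\nu$-neighborhood of some plane, so projecting along that plane's normal reduces the problem, at scale $\delta/\nu$, to a $2$-dimensional Kakeya problem with an overhead of $\nu$-losses from the slicing. The planar case is handled without any $\delta^{-\varepsilon}$ loss by the classical C\'ordoba $L^{2}$ argument, and feeding this back into an induction on scales with $\nu=\delta^{\eta}$ yields $\bigl\Vert\sum_{T}\mathbf{1}_{T}\bigr\Vert_{L^{3/2}}\leq C_{\varepsilon}\,\delta^{-\varepsilon-O(\eta)}$; taking $\eta$ small and renaming $\varepsilon$ gives Conjecture \ref{linear Kakeya}. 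I expect the main obstacle to be the quantitative book-keeping in this step: every loss of a power of $\nu$ from the narrow projection, the planar reduction, or the sum over triples must be absorbable by the induction, so that the net exponent strictly improves upon the trivial $\delta^{-1}$ by any prescribed $\varepsilon$.

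The final assertion, the equivalence with the Fourier counterparts (Conjectures \ref{ssFsfec} and \ref{ssFsftec}), follows from the by-now-standard wave-packet / randomization duality. A wave-packet decomposition of the Fourier extension operator represents $|Ef|^{2}$ essentially as a weighted sum $\sum_{T}|a_{T}|^{2}\mathbf{1}_{T}$, yielding Fourier $\Rightarrow$ Kakeya; conversely, randomization of the wave-packet coefficients (Calder\'on's trick) combined with Khintchine's inequality, as in the arguments of Bourgain and Tao, delivers Kakeya $\Rightarrow$ Fourier. The same dictionary applied triple-by-triple gives the corresponding equivalence for the $\nu$-disjoint trilinear version.
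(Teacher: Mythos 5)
Your forward direction (H\"older with exponents $(3,3,3)$) matches the paper exactly. But for the converse, which is where the theorem lives, you propose a direct broad/narrow argument on the Kakeya side, whereas the paper deliberately routes the proof through the Fourier side (the modulated single-scale square function Conjectures \ref{ssFsfec} and \ref{ssFsftec}) precisely because the narrow/clustered case there can be handled by Tao--Vargas--Vega parabolic rescaling followed by an absorption argument with a \emph{fixed} scale parameter $\lambda$; the paper makes no use of induction on scales. Your plan has a concrete gap at exactly the point you flag as ``book-keeping.'' You propose taking $\nu=\delta^{\eta}$, but the hypothesis (\ref{tri Kak dual}) only provides a constant $C_{\varepsilon,\nu}$ with no controlled growth as $\nu\to 0$. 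Feeding $\nu=\delta^{\eta}$ into the trilinear bound therefore yields $C_{\varepsilon,\delta^{\eta}}\,\delta^{-\varepsilon}$, and without a quantitative bound on how $C_{\varepsilon,\nu}$ blows up, the induction does not close. The paper avoids this entirely: after fixing $q>3$ it fixes $\nu=\nu_q$ once and for all in Case~1, and the narrow (Cases~2, 3) contributions are beaten by choosing $\lambda$ large depending only on $q$, producing the absorption $A_s^{(q)}\lesssim C_\nu 2^{(3+15/q)\lambda}2^{\varepsilon s/3}+\tfrac12 A_s^{(q)}$.

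A second issue is your reduction of the narrow piece to planar Kakeya. If the tube directions at a narrow point lie within $\nu$ of a plane, their projections onto that plane are not $\delta\times 1$ rectangles but have transverse thickness $\max(\delta,\nu)$, and the planar directions are no longer $\delta$-separated after projection; moreover the planar C\'ordoba bound lives at $L^{2}$ while the target is $L^{3/2}$, so even a clean 2D reduction does not splice into the 3D estimate without additional input. The paper sidesteps the dimensional drop altogether: the narrow contributions, after parabolic rescaling, are again 3D Fourier square-function quantities at a coarser scale, controlled by the very constant $A_s^{(q)}$ being estimated. You also collapse what the paper treats as two distinct cases --- clustering near a single cap (their Case~2) versus clustering near a well-separated pair of caps (their Case~3, the ``dipole'' case, where directions lie near a great circle but not near any single cap); the latter is where the $\nu$-disjointness hypothesis does its real work and cannot be folded into a generic planar reduction. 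Finally, your Fourier$\leftrightarrow$Kakeya bridge via wave packets plus Khintchine randomization is a legitimate classical route, but the paper's Conjectures \ref{ssFsfec}--\ref{ssFsftec} are formulated directly as square-function inequalities, so the bridge (Propositions \ref{Kak equiv} and \ref{Kak tri equiv}) needs only the pointwise domination $\mathcal{S}\mathbb{T}\le C2^{2s}\mathcal{S}^{s,\mathbf{u}}_{\operatorname*{Fourier}}f$ in one direction and rapid Fourier decay in the other, with no randomization.
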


To put this theorem into context, we note that if we both \emph{restrict} the
triples $\left(  \mathbb{T}_{1},\mathbb{T}_{2},\mathbb{T}_{3}\right)  $ to be
$\nu$-transverse as in \cite{BeCaTa}, and \emph{permit} parallel and repeated
tubes within the families $\mathbb{T}_{k}$, then it is a \textbf{theorem} of
Bennett, Carbery and Tao \cite[Theorem 1.15]{BeCaTa} that $\left\Vert
\prod_{k=1}^{3}\left(  \sum_{T_{k}\in\mathbb{T}_{k}}\mathbf{1}_{T_{k}}\right)
\right\Vert _{L^{\frac{1}{2}}\left(  \mathbb{R}^{3}\right)  }\leq
C_{\varepsilon,\nu}\delta^{-\varepsilon}$ holds for all $\varepsilon>0$ and
all $\nu$\emph{-transverse }families $\mathbb{T}_{k}$ of $\delta$-tubes in
$\mathbb{R}^{3}$ (not necessarily $\delta$-separated) and $0<\delta\leq\nu$.
Their theorem is proved using a beautiful sliding Gaussian argument, that
depends heavily on the orientations of the tubes being clustered near the
coordinate vectors $\mathbf{e}_{1}$, $\mathbf{e}_{2}$ and $\mathbf{e}_{3}$.
The general transverse case with parallel and repeated tubes is then obtained
by a linear change of variable, but the general \emph{disjoint} case with
separated tubes cannot be obtained in this way\footnote{They then go on to use
a trilinear variant of a bootstrapping argument of Bourgain to prove a
corresponding transverse trilinear Fourier extension theorem \cite[Theorem
1.17]{BeCaTa}, but this transverse result is not yet known to be sufficient
for the Fourier extension conjecture. On the other hand, the logically
stronger \emph{disjoint} trilinear Fourier extension conjecture is shown to be
equivalent to the Fourier extension conjecture in \cite{RiSa}.}.

\subsection{Organization of the proof}

The linear conjecture implies the trilinear conjecture since $\mathcal{K}%
^{\ast}\left(  \otimes_{1}L^{\infty}\rightarrow L^{\frac{3}{2}};\varepsilon
\right)  $ implies $\mathcal{K}_{\operatorname*{disj}\nu}^{\ast}\left(
\otimes_{3}L^{\infty}\rightarrow L^{\frac{1}{2}};\varepsilon\right)  $ for all
$\varepsilon,\nu>0$ by H\"{o}lder's inequality with exponents $\left(
3,3,3\right)  $ (even without the assumption of $\nu$-disjoint in
(\ref{tri Kak dual})). The proof of the converse will proceed by proving the
following chain of equivalences\footnote{strictly speaking we need only prove
the implications in the direction from bottom left to top left as pictured,
but we include the reverse implications since they have informative proofs.},%
\begin{equation}
\fbox{$%
\begin{array}
[c]{ccc}%
\begin{array}
[c]{c}%
\mathbf{Lin\ dual\ Kakeya}\\
\mathbf{Max\ Op\ Conj\ }%
\end{array}
& \Longleftrightarrow &
\begin{array}
[c]{c}%
\mathbf{Lin\ modulated\ single\ scale}\\
\mathbf{Fourier\ Squ\ Ext\ Conj}%
\end{array}
\\
&  & \Updownarrow\\%
\begin{array}
[c]{c}%
\mathbf{Disjoint\ tri\ dual\ Kakeya}\\
\mathbf{Max\ Op\ Conj}%
\end{array}
& \Longleftrightarrow &
\begin{array}
[c]{c}%
\mathbf{Disjoint\ tri\ modulated\ single\ scale}\\
\mathbf{Fourier\ Squ\ Ext\ Conj}%
\end{array}
\end{array}
$}\ . \label{diag}%
\end{equation}

The vertical implication $\Updownarrow$ on the Fourier side of the diagram
(\ref{diag}), comprises the deepest part of the proof, where extension theory
on the paraboloid can be exploited. Indeed, the proof is a modification of the
argument in \cite[proof of Theorem 3]{RiSa}, that followed in turn the
pigeonholing argument of Bourgain and Guth in \cite[Section 2]{BoGu}, and used
the parabolic rescaling in Tao, Vargas and Vega \cite{TaVaVe}. It is this
latter phenomenon that dictates our use of the paraboloid on the Fourier side.
Our variant of this theorem will use the assumption of $\nu$-disjoint patches
as in \cite{RiSa}, called \emph{weak} $\nu$-transverse by Muscalu and Oliveira
in \cite{MuOl}, but will only require the functions $f_{k}$ to have a special
`single scale Kakeya' form $\mathsf{Q}_{s}f\equiv\sum_{I}\left\langle
f,\varphi_{I}\right\rangle \varphi_{I}$, where the functions $\varphi_{I}$ are
$L^{2}$-normalized translations and dilations of a `father' wavelet $\varphi$
to a dyadic square $I$, and where the sum is taken over a tiling by dyadic
squares of side length $2^{-s}$, $s\in\mathbb{N}$. On the other hand, we don't
need to use the induction on scales idea ($K_{1}\ll K$), and the $L^{4}$
estimates going back to Cordoba, that were used in \textbf{Case 3} of the
argument in \cite[Section 2]{BoGu}, since the $\nu$-disjoint assumption we
make in the trilinear inequality includes this difficult case.

While the Kakeya conjectures are formulated at their critical indices, the two
Fourier conjectures require avoiding their critical indices in order to apply
the modification of the Bourgain Guth argument (which requires $q>3$).

The remaining two horizontal equivalences $\Longleftrightarrow$ and
$\Longleftrightarrow$, that each interchange the Kakeya side of the diagram
with the Fourier side, use standard square function and rapid decay arguments.

\subsection{Preliminaries}

The Fourier extension operator $\mathcal{E}$ on the paraboloid in three
dimensions is given by,%
\[
\mathcal{E}f\left(  \xi\right)  \equiv\left[  \Phi_{\ast}\left(  f\left(
x\right)  dx\right)  \right]  ^{\wedge}\left(  \xi\right)  =\int_{U}%
e^{-i\Phi\left(  x\right)  \cdot\xi}f\left(  x\right)  dx,\ \ \ \ \ \text{for
}\xi\in\mathbb{R}^{3},
\]
where $\Phi_{\ast}\left(  f\left(  x\right)  dx\right)  $ denotes the
pushforward of the measure $f\left(  x\right)  dx$ supported in $U\subset
B_{\mathbb{R}^{2}}\left(  0,\frac{1}{2}\right)  $ to the paraboloid
$\mathbb{P}^{2}$ under the usual parameterization $\Phi:U\rightarrow
\mathbb{P}^{2}$ by $\Phi\left(  x\right)  =\left(  x_{1},x_{2},x_{1}^{2}%
+x_{2}^{2}\right)  $ for $x=\left(  x_{1},x_{2}\right)  \in U$, where the
pushforward $\Phi_{\ast}\mu$ of a measure $\mu$ is defined by the identity
$\left\langle g,\Phi_{\ast}\mu\right\rangle =\left\langle g\circ\Phi
,\mu\right\rangle $ for all $g\in C_{c}\left(  \mathbb{R}^{3}\right)  $. We
will often abuse notation and simply write $\Phi_{\ast}f$ where we view $f$ as
the measure $f\left(  x\right)  dx$.

\begin{definition}
Denote by $\operatorname*{Grid}$ the set of all grids $\mathcal{G}$ in
$\mathbb{R}^{2}$, and for $s\in\mathbb{N}$ define%
\[
\mathcal{G}_{s}\left[  U\right]  \equiv\left\{  I\in\mathcal{G}:I\subset
U\text{ and }\ell\left(  I\right)  =2^{-s}\right\}  .
\]

\end{definition}

\begin{notation}
For each $s\in\mathbb{N}$, we fix a $2^{-s}$-separated subset $\mathcal{G}%
_{s}^{\ast}\left[  U\right]  $ of $\mathcal{G}_{s}\left[  U\right]  $. As this
sequence of squares remains fixed throughout the paper - up until the very end
- we will simply write $\mathcal{G}_{s}\left[  U\right]  $, with the
understanding that the squares $I\in\mathcal{G}_{s}\left[  U\right]  $ are
$2^{-s}$-separated. In any event, $\mathcal{G}_{s}\left[  U\right]  $ is a
finite union of collections of the form $\mathcal{G}_{s}^{\ast}\left[
U\right]  $.
\end{notation}

\begin{definition}
\label{def M}Set $\mathcal{V}_{s}\equiv\left\{  \mathbb{R}^{3}\text{-valued
sequences on }\mathcal{G}_{s}\left[  U\right]  \right\}  $. For $s\in
\mathbb{N} $ and $\mathbf{u}=\left\{  u_{I}\right\}  _{I\in\mathcal{G}%
_{s}\left[  U\right]  }\in\mathcal{V}_{s}$, define the modulation
$\mathsf{M}_{\mathbf{u}}^{s}$ on $\Phi\left(  U\right)  $ by,%
\[
\mathsf{M}_{\mathbf{u}}^{s}\left(  z\right)  \equiv\sum_{I\in\mathcal{G}%
_{s}\left[  U\right]  }e^{iu_{I}\cdot z}\mathbf{1}_{\Phi\left(  2I\right)
}\left(  z\right)  ,\ \ \ \ \ \text{\ for }z\in\Phi\left(  U\right)  .
\]

\end{definition}

\begin{definition}
\label{def Q}Let $I_{0}\equiv\left[  0,1\right]  ^{2}$ be the unit square in
the plane. Fix $\varphi\in C_{c}^{\infty}\left(  2I_{0}\right)  $ such that
$\varphi=1$ on $I_{0}$. Then for any square $I$, let $\varphi_{I}$ be the
$L^{2}$ normalized translation and dilation of $\varphi$ that is adapted to
$I$. Given $f\in L^{1}\left(  U\right)  $ and $s\in\mathbb{N}$ define the
pseudoprojections
\[
\bigtriangleup_{I}f\equiv\left\langle f,\varphi_{I}\right\rangle \varphi
_{I}\text{ and }\mathsf{Q}_{s,U}f\equiv\sum_{I\in\mathcal{G}_{s}\left[
U\right]  }\bigtriangleup_{I}f=\sum_{I\in\mathcal{G}_{s}\left[  U\right]
}\left\langle f,\varphi_{I}\right\rangle \varphi_{I}\ .
\]

\end{definition}

Note that the assumption on separation of squares in $\mathcal{G}_{s}\left[
U\right]  =\mathcal{G}_{s}^{\ast}\left[  U\right]  $ implies that $\left\{
\bigtriangleup_{I}f\right\}  _{I\in\mathcal{G}_{s}\left[  U\right]  }$ is a
collection of orthogonal pseudoprojections at level $s$, by which we mean that
for $I,L\in\mathcal{G}_{s}\left[  U\right]  =\mathcal{G}_{s}^{\ast}\left[
U\right]  $,%
\begin{equation}
\bigtriangleup_{L}\bigtriangleup_{I}f=\left\{
\begin{array}
[c]{ccc}%
c_{\flat}\bigtriangleup_{I}f & \text{ if } & L=I\\
0 & \text{ if } & L\not =I
\end{array}
\right.  ,\ \ \ \ \ \text{where }c_{\flat}=\left\langle \varphi_{I}%
,\varphi_{I}\right\rangle =\left\langle \varphi,\varphi\right\rangle \approx1.
\label{pseudo}%
\end{equation}

\section{The Fourier square function}

\begin{definition}
For $s\in\mathbb{N}$ and $\mathbf{u}\in\mathcal{V}_{s}$ define the
\emph{modulated\ single scale }Fourier square function by
\begin{equation}
\mathcal{S}_{\operatorname*{Fourier}}^{s,\mathbf{u}}f\left(  \xi\right)
\equiv\left(  \sum_{I\in\mathcal{G}_{s}\left[  U\right]  }\left\vert \left(
\mathsf{M}_{\mathbf{u}}^{s}\Phi_{\ast}\bigtriangleup_{I}f\right)  ^{\wedge
}\left(  \xi\right)  \right\vert ^{2}\right)  ^{\frac{1}{2}}=\left(
\sum_{I\in\mathcal{G}_{s}\left[  U\right]  }\left\vert \tau_{u_{I}}%
\widehat{\Phi_{\ast}\bigtriangleup_{I}f}\left(  \xi\right)  \right\vert
^{2}\right)  ^{\frac{1}{2}}, \label{def Four square}%
\end{equation}
where $\mathsf{M}_{\mathbf{u}}^{s}$ is the modulation defined in Definition
\ref{def M}, and $\tau_{u_{I}}$ denotes translation by $u_{I}$.
\end{definition}

\begin{definition}
Let $1\leq q\leq\infty$ and $0<\varepsilon<1$. We say the statement
$\mathcal{E}^{\operatorname*{square}}\left(  \otimes_{1}L^{\infty}\rightarrow
L^{q};\varepsilon\right)  $ holds if there is a positive constant
$C_{q,\varepsilon}$ such that%
\begin{equation}
\left\Vert \mathcal{S}_{\operatorname*{Fourier}}^{s,\mathbf{u}}f\right\Vert
_{L^{q}\left(  \mathbb{R}^{3}\right)  }\lesssim C_{q,\varepsilon
}2^{\varepsilon s}\left\Vert f\right\Vert _{L^{\infty}\left(  U\right)
},\ \ \ \ \ \text{for all }s\in\mathbb{N}\text{, }\mathbf{u}=\left\{
u_{I}\right\}  _{I\in\mathcal{G}_{s}\left[  U\right]  }\in\mathcal{V}%
_{s}\text{ and }f\in L^{\infty}\left(  U\right)  . \label{FECUS}%
\end{equation}

\end{definition}

\begin{conjecture}
[modulated single scale Fourier square function extension conjecture]%
\label{ssFsfec}The statement $\mathcal{E}^{\operatorname*{square}}\left(
\otimes_{1}L^{\infty}\rightarrow L^{q};\varepsilon\right)  $ holds for all
$0<\varepsilon<1$ and $q>3$.
\end{conjecture}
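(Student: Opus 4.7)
The plan is to establish Conjecture \ref{ssFsfec} as equivalent to Conjecture \ref{linear Kakeya} via the horizontal implication at the top of diagram (\ref{diag}). The argument is a standard wave-packet / square-function correspondence: each $\widehat{\Phi_{\ast}\bigtriangleup_I f}$ is a wave packet living on a specific tube in $\mathbb{R}^3$, and the family of all such tubes (after rescaling) is a $\delta$-separated family of $\delta$-tubes with $\delta = 2^{-s}$, so $L^{q/3}$ estimates on the square function translate directly into $L^{q/6}$ Kakeya-sum estimates.

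Concretely, writing $\bigtriangleup_I f = a_I \varphi_I$ with $|a_I|\lesssim \|f\|_\infty\cdot 2^{-s}$ and applying stationary phase to $\widehat{\Phi_{\ast}\bigtriangleup_I f}(\xi) = a_I\int \varphi_I(x)\,e^{-i(x\cdot\xi' + |x|^2\xi_3)}\,dx$ identifies this as a wave packet of amplitude $\sim \|f\|_\infty\cdot 2^{-2s}$ concentrated (up to Schwartz tails) on a tube $T_I^{\ast}$ of dimensions $2^s\times 2^s\times 2^{2s}$ whose long axis is the paraboloid normal $(-2x_I, 1)$ above $x_I$; the modulation $\mathsf{M}_{\mathbf{u}}^{s}$ merely translates this tube by $u_I$ without altering its orientation. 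Because $\mathcal{G}_s^{\ast}[U]$ is $2^{-s}$-separated, the resulting orientations are $2^{-s}$-separated on $\mathbb{S}^2$, so the rescaling $\xi\mapsto 2^{-2s}\xi$ sends $\{T_I^{\ast} + u_I\}$ to a $\delta$-separated family $\{\tilde T_I\}$ of standard $\delta$-tubes. Combining the pointwise wave-packet bound with this rescaling yields
\[
\bigl\|\mathcal{S}^{s,\mathbf{u}}_{\operatorname{Fourier}} f\bigr\|_{L^{q/3}(\mathbb{R}^3)}^{2} \;\lesssim\; \|f\|_\infty^{2}\cdot 2^{-4s + 36s/q}\cdot \Bigl\|\textstyle\sum_{I}\mathbf{1}_{\tilde T_I}\Bigr\|_{L^{q/6}(\mathbb{R}^3)}.
\]
At $q = 9$ the exponent $q/6 = 3/2$ coincides with the critical Kakeya exponent, so Conjecture \ref{linear Kakeya} delivers the target $\|\mathcal{S}\|_3 \lesssim 2^{\varepsilon s}\|f\|_\infty$; for $q > 9$ one interpolates the critical Kakeya bound with the trivial $L^\infty$-estimate $\|\sum \mathbf{1}_{\tilde T_I}\|_\infty \lesssim \delta^{-2}$. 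The reverse implication runs in reverse: given any $\delta$-separated family $\mathbb{T}$, match each $T\in\mathbb{T}$ with a square $I_T\in\mathcal{G}_s[U]$ whose associated wave-packet orientation agrees with $T$, pick a translation $u_{I_T}$ that aligns them, and take $f = \mathbf{1}_U$ so that the coefficients $a_{I_T}$ are uniformly comparable; then $\mathcal{S}^{s,\mathbf{u}}_{\operatorname{Fourier}} f$ pointwise dominates a constant multiple of $2^{-2s}\bigl(\sum_T \mathbf{1}_T\bigr)^{1/2}$ on a sufficiently large region, and the Fourier square estimate directly produces the Kakeya bound.

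The main obstacle is the subcritical range $3 < q < 9$, where $q/6$ falls strictly below the critical Kakeya index $3/2$ and critical Kakeya alone cannot yield the required $2^{\varepsilon s}$ loss by simple interpolation; moreover, the stationary-phase tails of each individual wave packet decay only like $|\xi_3|^{-1}$, so $\|\mathcal{S}^{s,\mathbf{u}}_{\operatorname{Fourier}} f\|_{L^{q/3}}$ is barely finite in this range and the tail contributions must be tracked with care. Rather than forcing the horizontal arrow directly in this subcritical window, the paper's strategy, as announced in the introduction, is to route through the trilinear side of (\ref{diag}): invoke the \emph{disjoint} trilinear Kakeya conjecture and then execute the Bourgain--Guth pigeonholing argument on the Fourier side, whose parabolic rescaling and $\nu$-disjointness hypotheses are precisely what allow the full range $q > 3$ to be recovered.
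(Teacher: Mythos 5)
Conjecture \ref{ssFsfec} is not proved in the paper; it is a conjecture, and what the paper proves is its equivalence with Conjecture \ref{linear Kakeya} (Proposition \ref{Kak equiv}) and with Conjecture \ref{ssFsftec} (Theorem \ref{SFA}). Your plan, transferring the Fourier square function estimate to and from the dual Kakeya estimate via the wave-packet correspondence, is exactly the content of Proposition \ref{Kak equiv}, so in outline your approach matches the paper's. However, two of your three concerns are artifacts.

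First, the ``subcritical range $3<q<9$'' comes from taking the $L^{q/3}$ in the conjecture statement at face value. Comparing against the defining display (\ref{FECUS}) (which carries exponent $q$), against the line ``by Conjecture \ref{ssFsfec} for $q=2p>3$'' in the proof of Proposition \ref{Kak Four}, and against the `only if' half of Theorem \ref{SFA}, where the three linear estimates are applied in $L^q$ and then H\"oldered down to $L^{q/3}$, the intended assertion is that (\ref{FECUS}) holds for all $q>3$; the exponent on $\mathcal{S}_{\operatorname*{Fourier}}^{s,\mathbf{u}}f$ is $q$, not $q/3$. (Finiteness also demands this: along the normal cone $\widehat{\Phi_{\ast}\bigtriangleup_{I}f}$ decays like $|\xi_3|^{-1}$, so its $L^p(\mathbb{R}^3)$ norm is infinite for $p\leq3$.) With the exponent corrected, the pointwise wave-packet domination transfers $L^q$ bounds on the square function into $L^{q/2}$ bounds on $\sum_T\mathbf{1}_T$, and for all $q>3$ the exponent $q/2$ lies strictly above the critical Kakeya index $3/2$; one interpolates the $L^{3/2}$ Kakeya estimate with the trivial $L^\infty$ bound, and there is no subcritical window nor any case split at $q=9$.

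Second, your final paragraph misattributes the role of the Bourgain--Guth pigeonholing in diagram (\ref{diag}). That argument carries the \emph{vertical} implication (Theorem \ref{SFA}, proved in detail as Theorem \ref{Loc lin}), passing from the disjoint trilinear Fourier square function conjecture back to the linear one. Each of the two \emph{horizontal} arrows is a direct wave-packet argument, Proposition \ref{Kak equiv} for the linear row and Proposition \ref{Kak tri equiv} for the trilinear row, and neither one routes through the other. The horizontal equivalence you are after does not use, and is not intended to use, the trilinear machinery.

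Your tail observation, on the other hand, is correct and worth flagging. In (\ref{rap}) the paper expands over dyadic dilates $2^m\widehat{I}$ with coefficients $a_m$ described as rapidly decreasing, but the longitudinal tail of $\widehat{\Phi_{\ast}\bigtriangleup_{I}f}$ inside the normal cone decays only like $|\xi_3|^{-1}$, so one actually has $a_m\sim 2^{-m}$, which is exponential but not superpolynomial. After parabolic rescaling the dyadic Kakeya factors grow geometrically in $m$, and the resulting series converges precisely for $q>3$, with a constant blowing up like $1/(q-3)$. The paper's presentation, which passes through a claimed $L^3$ estimate and then interpolates with $L^\infty$, glosses over this (the $L^3$ norm of a single wave packet is already divergent); the fix is to carry out the dyadic tail sum directly in $L^q$ for each fixed $q>3$. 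This is a bookkeeping point, not an obstruction, and it does not force the trilinear detour you propose.
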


Note the use of the sup norm on the right hand side of (\ref{FECUS}), and that
integration is taken over all of $\mathbb{R}^{3}$ on the left hand side, as
opposed to the local inequalities used in \cite{BeCaTa} and \cite{BoGu} that
integrate over balls $B\left(  0,R\right)  $. The role of the radius $R$ is
here played by the scale $s\in\mathbb{N}$, with the connection being that
$\mathbf{1}_{B\left(  0,R\right)  }\leq\varphi_{R}$ for an appropriate bump
function $\varphi$, and $\varphi_{R}\widehat{F}=\widehat{\widehat{\varphi
}_{\frac{1}{R}}\ast F}$ and $\widehat{\varphi}_{\frac{1}{R}}\ast F\sim
\sum_{I\in\mathcal{G}_{s}\left[  U\right]  }\Phi_{\ast}\bigtriangleup_{I}f$
where $F=\sum_{I\in\mathcal{G}_{s}\left[  U\right]  }\bigtriangleup_{I}f$ with
$R=2^{s}$. This will have a consequence for redefining the norm used in
\cite[Section 2.1]{BoGu}. The importance of Conjecture \ref{ssFsfec} lies in
the fact that it implies the dual form of the Kakeya maximal operator
Conjecture (\ref{linear Kakeya}), as we now show.

\begin{definition}
\label{I hat}For $I\in\mathcal{G}_{s}\left[  U\right]  $, denote its center by
$c_{I}$, and denote by $\widehat{I}$ the $\left(  2^{s}\times2^{s}\times
2^{2s}\right)  $-tube in $\mathbb{R}^{3}$ (i.e. having dimensions $2^{s}%
\times2^{s}\times2^{2s}$) centered at the origin with centerline parallel to
the unit upward normal $\mathbf{n}_{\Phi}\left(  \Phi\left(  c_{I}\right)
\right)  $ to the paraboloid at the point $\Phi\left(  c_{I}\right)  $.
\end{definition}

\begin{proposition}
\label{Kak Four}The modulated single scale Fourier square function extension
Conjecture \ref{ssFsfec} implies the dual form of the Kakeya maximal operator
Conjecture \ref{linear Kakeya}.
\end{proposition}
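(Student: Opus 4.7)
The plan is to produce, for any $\delta$-separated family $\mathbb T$ of $\delta$-tubes with $\delta=2^{-s}$, a test function $f \in L^\infty(U)$ and modulation parameters $\mathbf u\in\mathcal V_s$ such that the Fourier square function $\mathcal S^{s,\mathbf u}_{\mathrm{Fourier}}f$ pointwise controls a rescaled Kakeya sum. The key scaling is the dilation $\xi=2^{2s}x$, which maps a $\delta$-tube $T$ of dimensions $\delta\times\delta\times 1$ to $\widetilde T\equiv 2^{2s}T$ of dimensions $2^s\times 2^s\times 2^{2s}$, precisely the shape of $\widehat I$. After rotating so that all orientations $\mathbf v(T)$ lie in the cap of $\mathbb S^2$ covered by $\{\mathbf n_\Phi(\Phi(c_I))\}_{I\in\mathcal G_s[U]}$, I would inject $\mathbb T\hookrightarrow\mathcal G_s[U]$ by $T\mapsto I(T)$ with $\mathbf n_\Phi(\Phi(c_{I(T)}))\approx\mathbf v(T)$ (possible since both sides are $2^{-s}$-separated on $\mathbb S^2$) and set $u_{I(T)}$ equal to the centre of $\widetilde T$, so that $u_{I(T)}+\widehat{I(T)}\approx\widetilde T$; set $u_I=0$ on the complement. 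With $f\equiv\sum_{T\in\mathbb T}\varphi_{I(T)}$, orthogonality (\ref{pseudo}) forces $\bigtriangleup_I f=c_\flat\varphi_I$ for $I$ in the image of $T\mapsto I(T)$ and $\bigtriangleup_I f=0$ otherwise, while bounded overlap of the doubles $2I(T)$ gives $\|f\|_{L^\infty(U)}\lesssim 2^s$.

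The second input is a Knapp-type lower bound $|\widehat{\Phi_*\varphi_I}(\xi)|\gtrsim 2^{-s}$ uniformly for $\xi\in\widehat I$. After the substitution $y=2^s(x-c_I)$ the Fourier transform equals $2^{-s}e^{-i\xi\cdot\Phi(c_I)}$ times the oscillatory integral of $\varphi(y)$ against the phase $2^{-s}[(\xi_1+2c_{I,1}\xi_3)y_1+(\xi_2+2c_{I,2}\xi_3)y_2]+2^{-2s}\xi_3|y|^2$; for $\xi\in\widehat I$ both the tangential and normal constraints force this phase to stay $O(1)$ over $\mathrm{supp}(\varphi)$, so (taking $\varphi\geq 0$ and possibly shrinking $\widehat I$ by a fixed factor) the real part of the integral dominates $\int\varphi\approx 1$. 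Combined with the construction of $f$, this yields
\begin{equation*}
\bigl|\mathcal S^{s,\mathbf u}_{\mathrm{Fourier}}f(\xi)\bigr|^2=\sum_{T\in\mathbb T}\bigl|\tau_{u_{I(T)}}\widehat{\Phi_*\bigtriangleup_{I(T)}f}(\xi)\bigr|^2\gtrsim 2^{-2s}\sum_{T\in\mathbb T}\mathbf 1_{\widetilde T}(\xi).
\end{equation*}

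Taking the $L^{3/2}$ norm of this inequality and applying Conjecture \ref{ssFsfec} at $q=9$ (so $q/3=3>1$ and $q>3$) yields
\begin{equation*}
\Bigl\|\sum_{T\in\mathbb T}\mathbf 1_{\widetilde T}\Bigr\|_{L^{3/2}(\mathbb R^3)}\lesssim 2^{2s}\bigl\|\mathcal S^{s,\mathbf u}_{\mathrm{Fourier}}f\bigr\|_{L^3(\mathbb R^3)}^2\lesssim 2^{2s}\cdot 2^{2\varepsilon s}\|f\|_{L^\infty}^2\lesssim 2^{(4+2\varepsilon)s}.
\end{equation*}
Undoing the dilation $\widetilde T=2^{2s}T$ removes a factor $2^{4s}$ from the $L^{3/2}$ norm, leaving $\|\sum_{T\in\mathbb T}\mathbf 1_T\|_{L^{3/2}(\mathbb R^3)}\lesssim 2^{2\varepsilon s}=\delta^{-2\varepsilon}$, which is Conjecture \ref{linear Kakeya} as $\varepsilon>0$ is arbitrary.

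The main technical hurdle is the uniform Knapp-type lower bound on $|\widehat{\Phi_*\varphi_I}|$ over the full tube $\widehat I$: one has to verify, uniformly in $I\in\mathcal G_s[U]$, that the quadratic term $2^{-2s}\xi_3|y|^2$ stays under control on $\widehat I$, which amounts to calibrating the implicit constants in the definition of the $(2^s\times 2^s\times 2^{2s})$-tube. Everything else reduces to bookkeeping: the geometric bijection between $\delta$-separated tube orientations and squares in $\mathcal G_s[U]$, and tracking the dilation factor through the $L^{3/2}$ norm.
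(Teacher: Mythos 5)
Your construction follows essentially the same route as the paper's proof: build a test function $f$ from the wavelets $\varphi_{I(T)}$, prove a Knapp-type lower bound $|\widehat{\Phi_*\bigtriangleup_{I(T)}f}(\xi)|\gtrsim 2^{-s}$ on (a fixed fraction of) the fat tube $\widehat{I(T)}$, conclude that the modulated Fourier square function pointwise dominates the dual Kakeya square function for the blown-up $(2^s\times 2^s\times 2^{2s})$-tubes, apply Conjecture \ref{ssFsfec}, and undo the parabolic dilation $\xi\mapsto 2^{2s}\xi$. Your choice $f=\sum_T\varphi_{I(T)}$ with $\|f\|_{L^\infty}\approx 2^s$ differs cosmetically from the paper's $f$ (which has $\|f\|_{L^\infty}=1$), but the extra factors of $2^s$ cancel on both sides, so this is fine. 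Your Knapp phase computation is also correct.

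The substantive gap is the exponent at which you invoke the conjecture. You read ``$\mathcal{E}^{\operatorname*{square}}(\otimes_1 L^\infty\to L^{q/3};\varepsilon)$ for $q>3$'' literally, take $q=9$, and use $\|\mathcal{S}^{s,\mathbf{u}}_{\operatorname*{Fourier}}f\|_{L^3}\lesssim 2^{\varepsilon s}\|f\|_{L^\infty}$ at the \emph{critical} exponent $L^3$, which lets you jump straight to the $L^{3/2}$ Kakeya norm. But the hypothesis cannot mean what that literal reading says: taking $q$ close to $3$ would give bounds in $L^r$ for $r$ close to $1$, and a single pushforward already satisfies $\|\widehat{\Phi_*\bigtriangleup_I f}\|_{L^r}\gtrsim 2^{(4/r-2)s}\|f\|_{L^\infty}$, which grows for $r<2$. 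The paper's own proof of this proposition applies the hypothesis as $\|\mathcal{S}^{s,\mathbf{u}}_{\operatorname*{Fourier}}f\|_{L^{2p}}\lesssim 2^{\varepsilon s}\|f\|_{L^\infty}$ only for $2p>3$, i.e., \emph{strictly above} $L^3$, which after rescaling yields $\int_{\mathbb{R}^3}\bigl(\sum_T\mathbf{1}_T\bigr)^p\lesssim 2^{(4p-6)s}$ for $p>\tfrac32$. It then reaches the critical $L^{3/2}$ estimate by a splitting step: decompose $\int\bigl(\sum_T\mathbf{1}_T\bigr)^{3/2}$ over the regions $\{\sum_T\mathbf{1}_T\leq 1\}$ and $\{\sum_T\mathbf{1}_T>1\}$, bound the first by the trivial $L^1$ estimate $\int\sum_T\mathbf{1}_T\lesssim 1$, and bound the second by the $L^{3/2+\varepsilon}$ estimate just obtained. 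Your proof omits this step, and it is not optional: if only $L^q$ with $q>3$ is available for the square function, your argument produces the Kakeya estimate at $L^p$ for $p>\tfrac32$ with a positive power of $2^s$, but not at the endpoint $p=\tfrac32$. Adding the splitting with the trivial $L^1$ bound closes the gap.

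One smaller point to make explicit: the orientations of a general $\delta$-separated tube family fill an $O(1)$ portion of $\mathbb{S}^2$, while the normals $\mathbf{n}_\Phi(\Phi(c_I))$ for $I\in\mathcal{G}_s[U]$, $U\subset B_{\mathbb{R}^2}(0,\tfrac12)$, span only a fixed spherical cap. Your ``rotate so all orientations lie in the cap'' therefore requires first decomposing $\mathbb{T}$ into $O(1)$ subfamilies whose orientations each fit in such a cap, handling each after rotating, and summing. Routine, but it should be said.
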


Proofs are well known, but we give one here for the sake of future reference.

\begin{definition}
\label{def tube square}Given a family $\mathbb{T}$ of tubes, the associated
\emph{dual Kakeya square function} $\mathcal{S}\mathbb{T}$ is defined by%
\[
\mathcal{S}\mathbb{T}\left(  \xi\right)  \equiv\left(  \sum_{T\in\mathbb{T}%
}\mathbf{1}_{T}\left(  \xi\right)  ^{2}\right)  ^{\frac{1}{2}}=\left(
\sum_{T\in\mathbb{T}}\mathbf{1}_{T}\left(  \xi\right)  \right)  ^{\frac{1}{2}%
},\ \ \ \ \ \xi\in\mathbb{R}^{3}.
\]

\end{definition}

\begin{proof}
Assume the single scale modular square function inequality (\ref{FECUS}),%
\[
\left\Vert \left(  \sum_{I\in\mathcal{G}_{s}\left[  U\right]  }\left\vert
\tau_{u_{I}}\widehat{\Phi_{\ast}\bigtriangleup_{I}f}\right\vert ^{2}\right)
^{\frac{1}{2}}\right\Vert _{L^{q}\left(  \mathbb{R}^{3}\right)  }=\left\Vert
\left(  \sum_{I\in\mathcal{G}_{s}\left[  U\right]  }\left(  \mathcal{S}%
_{\operatorname*{Fourier}}^{s,\mathbf{u}}f\right)  ^{2}\right)  ^{\frac{1}{2}%
}\right\Vert _{L^{q}\left(  \mathbb{R}^{3}\right)  }\lesssim2^{\varepsilon
s}\left\Vert f\right\Vert _{L^{\infty}\left(  U\right)  },
\]
for all $q>3$ and $0<\varepsilon<1$. The Fourier square function
$\mathcal{S}_{\operatorname*{Fourier}}^{s,\mathbf{u}}f$ dominates $2^{-2s}$
times the corresponding dual Kakeya square function
\[
\mathcal{S}\mathbb{T}\left(  2^{s}\times2^{s}\times2^{2s}\right)  \left(
\xi\right)  \equiv\left(  \sum_{T\in\mathbb{T}\left(  2^{s}\times2^{s}%
\times2^{2s}\right)  }\mathbf{1}_{T}\left(  \xi\right)  \right)  ^{\frac{1}%
{2}},
\]
where the tubes $T$ in $\mathbb{T}$ are the translated tubes $\widehat
{I}+u_{I}$ with $\mathbf{u}=\left\{  u_{I}\right\}  _{I\in\mathcal{G}%
_{s}\left[  U\right]  }$. Here we have written $\mathbb{T}=\mathbb{T}\left(
2^{s}\times2^{s}\times2^{2s}\right)  $ to denote a family of $2^{-s}%
$-separated $\left(  2^{s}\times2^{s}\times2^{2s}\right)  $-tubes, where an
$\left(  \alpha\times\beta\times\gamma\right)  $-tube has dimensions
$\alpha\times\beta\times\gamma$ and is oriented in any direction. Indeed, we
have%
\begin{align*}
\left\vert \widehat{\Phi_{\ast}\bigtriangleup_{I}f}\left(  \xi\right)
\right\vert  &  \gtrsim\left\Vert \bigtriangleup_{I}f\right\Vert _{L^{1}%
}\mathbf{1}_{\widehat{I}}\left(  \xi\right)  \approx\left\vert \left\langle
f,\varphi_{I}\right\rangle \right\vert \left\Vert \varphi_{I}\right\Vert
_{L^{1}}\mathbf{1}_{\widehat{I}}\left(  \xi\right) \\
&  \gtrsim\left\Vert f\right\Vert _{L^{\infty}\left(  U\right)  }\left\Vert
\varphi_{I}\right\Vert _{L^{1}}\left\Vert \varphi_{I}\right\Vert _{L^{1}%
}\mathbf{1}_{\widehat{I}}\left(  \xi\right)  \approx2^{-2s}\mathbf{1}%
_{\widehat{I}}\left(  \xi\right)  ,
\end{align*}
for an appropriate choice of $f$ with $\left\Vert f\right\Vert _{L^{\infty
}\left(  U\right)  }=1$. Altogether, there is $0<C<\infty$ such that%
\begin{equation}
\mathcal{S}\mathbb{T}\left(  2^{s}\times2^{s}\times2^{2s}\right)  \left(
\xi\right)  \leq C2^{2s}\mathcal{S}_{\operatorname*{Fourier}}^{s,\mathbf{u}%
}f\left(  \xi\right)  ,\ \ \ \ \ \text{for all }\xi\in\mathbb{R}^{3},
\label{square point}%
\end{equation}
where $f$ is an appropriate function bounded by $1$, and supported on a set of
measure $2^{-2s}\#\mathbb{T}\lesssim1$.

Thus we have%
\begin{align*}
&  \left(  \int_{\mathbb{R}^{3}}\left(  \sum_{T\in\mathbb{T}\left(
2^{s}\times2^{s}\times2^{2s}\right)  }\mathbf{1}_{T}\left(  \xi\right)
\right)  ^{p}d\xi\right)  ^{\frac{1}{p}}=\left(  \int_{\mathbb{R}^{3}%
}\mathcal{S}\mathbb{T}\left(  2^{s}\times2^{s}\times2^{2s}\right)  \left(
\xi\right)  ^{2p}d\xi\right)  ^{\frac{1}{p}}\\
&  \lesssim\left(  \int_{\mathbb{R}^{3}}\left[  2^{2s}\mathcal{S}%
_{\operatorname*{Fourier}}^{s,\mathbf{u}}f\left(  \xi\right)  \right]
^{2p}d\xi\right)  ^{\frac{1}{p}}=2^{4s}\left(  \int_{\mathbb{R}^{3}}\left[
\mathcal{S}_{\operatorname*{Fourier}}^{s,\mathbf{u}}f\left(  \xi\right)
\right]  ^{2p}d\xi\right)  ^{\frac{2}{2p}}\\
&  =\left(  2^{2s}\left\Vert \mathcal{S}_{\operatorname*{Fourier}%
}^{s,\mathbf{u}}f\right\Vert _{L^{2p}}\right)  ^{2}\lesssim\left(
2^{2s}\left\Vert f\right\Vert _{L^{\infty}\left(  U\right)  }\right)  ^{2},
\end{align*}
by Conjecture \ref{ssFsfec}\ for $q=2p>3$, i.e.%
\[
\left(  \int_{\mathbb{R}^{3}}\left(  \sum_{T\in\mathbb{T}\left(  2^{s}%
\times2^{s}\times2^{2s}\right)  }\mathbf{1}_{T}\left(  \xi\right)  \right)
^{p}d\xi\right)  ^{\frac{1}{2p}}\lesssim2^{2s}\left\Vert f\right\Vert
_{L^{\infty}\left(  U\right)  }\leq2^{2s}.
\]
Scaling this down by $2^{-2s}$ to $\left(  2^{-s}\times2^{-s}\times1\right)
$-tubes gives%
\begin{align*}
\left(  \int_{\mathbb{R}^{3}}\left(  \sum_{T\in\mathbb{T}\left(  2^{-s}%
\times2^{-s}\times1\right)  }\mathbf{1}_{T}\left(  \xi\right)  \right)
^{p}\left(  2^{2s}\right)  ^{3}d\xi\right)  ^{\frac{1}{2p}}  &  \lesssim
2^{2s},\\
\text{i.e. }\left(  \int_{\mathbb{R}^{3}}\left(  \sum_{T\in\mathbb{T}\left(
2^{-s}\times2^{-s}\times1\right)  }\mathbf{1}_{T}\left(  \xi\right)  \right)
^{p}d\xi\right)  ^{\frac{1}{p}}  &  \lesssim2^{\left(  2-\frac{3}{p}\right)
s},\ \ \ \ \ \text{for }p>\frac{3}{2}.
\end{align*}

Now `interpolate' with respect to the trivial $L^{1}$ inequality $\left\Vert
\sum_{T\in\mathbb{T}\left(  2^{-s}\times2^{-s}\times1\right)  }\mathbf{1}%
_{T}\right\Vert _{L^{1}}\leq1$, to obtain (\ref{Kak dual}) with $\delta
=2^{-s}$. Indeed, given $\varepsilon>0$, let $p_{\varepsilon}=\frac{3}%
{2}+\varepsilon>\frac{3}{2}$, so that,%
\begin{align}
&  \int_{\mathbb{R}^{3}}\left(  \sum_{T\in\mathbb{T}\left(  2^{-s}\times
2^{-s}\times1\right)  }\mathbf{1}_{T}\left(  \xi\right)  \right)  ^{\frac
{3}{2}}d\xi=\left\{  \int_{\left\{  \sum_{T\in\mathbb{T}}\mathbf{1}_{T}%
\leq1\right\}  }+\int_{\left\{  \sum_{T\in\mathbb{T}}\mathbf{1}_{T}>1\right\}
}\right\}  \left(  \sum_{T\in\mathbb{T}\left(  2^{-s}\times2^{-s}%
\times1\right)  }\mathbf{1}_{T}\left(  \xi\right)  \right)  ^{\frac{3}{2}}%
d\xi\label{interp}\\
&  \leq\int_{\left\{  \sum_{T\in\mathbb{T}}\mathbf{1}_{T}\leq1\right\}
}\left(  \sum_{T\in\mathbb{T}\left(  2^{-s}\times2^{-s}\times1\right)
}\mathbf{1}_{T}\left(  \xi\right)  \right)  d\xi+\int_{\left\{  \sum
_{T\in\mathbb{T}}\mathbf{1}_{T}>1\right\}  }\left(  \sum_{T\in\mathbb{T}%
\left(  2^{-s}\times2^{-s}\times1\right)  }\mathbf{1}_{T}\left(  \xi\right)
\right)  ^{p_{\varepsilon}}d\xi\nonumber\\
&  \lesssim1+C_{p_{\varepsilon}}2^{\left(  4p_{\varepsilon}-6\right)
s}=1+C_{p_{\varepsilon}}2^{4\varepsilon s}.\nonumber
\end{align}
This completes the proof that Conjecture \ref{ssFsfec} implies Conjecture
\ref{linear Kakeya}.
\end{proof}

Here is the definition of a $\nu$-disjoint triple of squares that is analogous
to that of families of tubes given in Definition \ref{tube disjoint}.

\begin{definition}
A triple $\left(  U_{1},U_{2},U_{3}\right)  $ of squares $U_{k}\subset U$ is
$\nu$\emph{-disjoint} if%
\begin{equation}
\ell\left(  U_{k}\right)  \approx\nu,\text{ and }\operatorname*{dist}\left(
U_{j},U_{k}\right)  \geq\nu,\ \ \ \ \ 1\leq j,k\leq3. \label{nu disjoint'}%
\end{equation}

\end{definition}

\begin{definition}
Let $1<q<\infty$ and $0<\varepsilon,\nu<1$. We say the statement
$\mathcal{A}_{\operatorname*{disj}\nu}^{\operatorname*{square}}\left(
\otimes_{3}L^{\infty}\rightarrow L^{\frac{q}{3}};\varepsilon\right)  $ holds
if there is a positive constant $C_{q,\varepsilon,\nu}$ depending only on $q$,
$\varepsilon$ and $\nu$, such that,%
\begin{align}
&  \left\Vert \mathcal{S}_{\operatorname*{Fourier}}^{s,\mathbf{u}_{1}}%
f_{1}\ \mathcal{S}_{\operatorname*{Fourier}}^{s,\mathbf{u}_{2}}f_{2}%
\ \mathcal{S}_{\operatorname*{Fourier}}^{s,\mathbf{u}_{3}}f_{3}\right\Vert
_{L^{\frac{q}{3}}\left(  \mathbb{R}^{3}\right)  }\leq C_{q,\varepsilon,\nu
}2^{\varepsilon s}\left\Vert f_{1}\right\Vert _{L^{\infty}\left(  U\right)
}\left\Vert f_{2}\right\Vert _{L^{\infty}\left(  U\right)  }\left\Vert
f_{3}\right\Vert _{L^{\infty}\left(  U\right)  }\ ,\label{single tri Four}\\
\text{for all }s  &  \in\mathbb{N}\text{ with }2^{-s}\leq\nu\text{, all }%
f_{k}\in L^{\infty}\left(  U_{k}\right)  \text{, all sequences }\mathbf{u}%
_{k}\in\mathcal{V}\text{, and all }\nu\text{-disjoint triples }\left(
U_{1},U_{2},U_{3}\right)  \subset U^{3}.\nonumber
\end{align}

\end{definition}

\begin{conjecture}
[modulated single scale Fourier square function disjoint trilinear extension
conjecture]\label{ssFsftec}For every $q>3$ there is $0<\nu<1$ such that the
statement $\mathcal{A}_{\operatorname*{disj}\nu}^{\operatorname*{square}%
}\left(  \otimes_{3}L^{\infty}\rightarrow L^{\frac{q}{3}};\varepsilon\right)
$ holds for all $0<\varepsilon<1$.
\end{conjecture}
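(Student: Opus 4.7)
The plan is to establish Conjecture \ref{ssFsftec} by horizontally reducing it to the disjoint trilinear dual Kakeya Conjecture \ref{trilinear Kakeya}, which is the trilinear converse of Proposition \ref{Kak Four}. The proof proceeds by (i) replacing each modulated Fourier square function with a Kakeya square function for a family of translated dual tubes via a wave-packet estimate, (ii) verifying that the $\nu$-disjoint hypothesis on the base squares $(U_{1},U_{2},U_{3})$ transfers to the $\nu$-disjoint hypothesis on tube orientation patches required by Conjecture \ref{trilinear Kakeya}, and (iii) interpolating the resulting super-critical trilinear Kakeya bound to deduce the $L^{q/3}$ estimate for $q>3$.

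For step (i), a direct integration by parts against the wavelet $\varphi_{I}$ and the paraboloid parameterization $\Phi$ gives the wave-packet representation
\[
\left|\tau_{u_{I}}\widehat{\Phi_{\ast}\bigtriangleup_{I}f_{k}}(\xi)\right|\lesssim\|f_{k}\|_{L^{\infty}(U_{k})}\,2^{-2s}\,\mathbf{1}_{\widehat{I}+u_{I}}(\xi)+\text{rapid decay tails},
\]
where $\widehat{I}+u_{I}$ is the translated $(2^{s}\times 2^{s}\times 2^{2s})$-tube centered at $u_{I}$ and oriented along $\mathbf{n}_{\Phi}(\Phi(c_{I}))$ as in Definition \ref{I hat}. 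Squaring, summing over $I\in\mathcal{G}_{s}[U_{k}]$ and taking square roots bounds $\mathcal{S}_{\operatorname*{Fourier}}^{s,\mathbf{u}_{k}}f_{k}$ pointwise by $\|f_{k}\|_{L^{\infty}}\,2^{-2s}\,\mathcal{S}\mathbb{T}_{k}$ (modulo tails), where $\mathbb{T}_{k}\equiv\{\widehat{I}+u_{I}:I\in\mathcal{G}_{s}[U_{k}]\}$ and $\mathcal{S}\mathbb{T}_{k}$ is the dual Kakeya square function of Definition \ref{def tube square}.

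For step (ii), the orientations of tubes in $\mathbb{T}_{k}$ are the upward unit normals $\mathbf{n}_{\Phi}(\Phi(c_{I}))$ for $I\subset U_{k}$, filling a spherical patch $\Omega_{k}\subset\mathbb{S}^{2}$. Since the Gauss map of $\Phi$ is a bi-Lipschitz diffeomorphism on $U$, the $\nu$-disjointness of $(U_{1},U_{2},U_{3})$ transfers -- with constants depending only on $U$ -- to $\nu$-disjointness of $(\Omega_{1},\Omega_{2},\Omega_{3})$ in the sense of Definition \ref{tube disjoint}. After rescaling each $\mathbb{T}_{k}$ by $2^{-2s}$ to produce $\delta$-separated $\delta$-tubes with $\delta=2^{-s}\leq\nu$, Conjecture \ref{trilinear Kakeya} applies and yields $\|\prod_{k=1}^{3}\sum_{T\in\mathbb{T}_{k}}\mathbf{1}_{T}\|_{L^{1/2}(\mathbb{R}^{3})}\lesssim C_{\varepsilon,\nu}\,\delta^{-\varepsilon}$ after undoing the rescaling.

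For step (iii), the product of the three Fourier square functions is dominated pointwise by $\prod_{k}\|f_{k}\|_{L^{\infty}}\cdot(2^{-2s})^{3}\prod_{k}\bigl(\sum_{T\in\mathbb{T}_{k}}\mathbf{1}_{T}\bigr)^{1/2}$, so that $\|\prod_{k}\mathcal{S}_{\operatorname*{Fourier}}^{s,\mathbf{u}_{k}}f_{k}\|_{L^{q/3}}\lesssim\prod_{k}\|f_{k}\|_{L^{\infty}}\cdot 2^{-6s}\,\|\prod_{k}\sum\mathbf{1}_{T}\|_{L^{q/6}(\mathbb{R}^{3})}^{1/2}$. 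Splitting the integration region into $\{\prod_{k}\sum\mathbf{1}_{T}\leq 1\}$ and its complement and applying the super-critical interpolation exactly as in (\ref{interp}), with $p_{\varepsilon}=q/6>1/2$, produces the desired $L^{q/3}$ estimate with loss $2^{\varepsilon s}$ for every $q>3$. The principal obstacle I foresee is the global, rather than local, nature of the estimate: the rapid-decay tails in step (i) must be summed over all $I\in\mathcal{G}_{s}[U_{k}]$ and integrated over all of $\mathbb{R}^{3}$, which demands tracking Schwartz decay of the wave-packet envelopes uniformly in the modulation parameters $u_{I}$ so that tail contributions are absorbed into the main term without introducing a scale-dependent loss, and exploiting the orthogonality (\ref{pseudo}) of the pseudoprojections $\bigtriangleup_{I}$ to suppress cross terms.
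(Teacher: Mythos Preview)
Your argument does not prove Conjecture \ref{ssFsftec}; it reduces it to Conjecture \ref{trilinear Kakeya}, which you invoke in step (ii) as though it were an established theorem. Conjecture \ref{trilinear Kakeya} is itself open --- indeed, the entire point of the paper is that it is \emph{equivalent} to the linear Kakeya maximal operator conjecture (Theorem \ref{big}), and Conjecture \ref{ssFsftec} is one of the four mutually equivalent open statements in diagram (\ref{diag}). The paper never claims to prove any of them outright, so a conditional reduction is not a proof of the conjecture.

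Read instead as a proof of the implication Conjecture \ref{trilinear Kakeya} $\Rightarrow$ Conjecture \ref{ssFsftec}, your argument is essentially the paper's ``only if'' direction of Proposition \ref{Kak tri equiv}: both control the modulated Fourier square function pointwise by a translated-tube square function via the rapid decay of $\widehat{\Phi_{\ast}\bigtriangleup_{I}f}$ away from the dual tube $\widehat{I}+u_{I}$, sum the Schwartz tails geometrically, and then apply the trilinear Kakeya hypothesis after rescaling. The one divergence is in your step (iii): the paper establishes the endpoint $L^{1}$ estimate (i.e.\ $q=3$) directly from Kakeya at its critical index $\tfrac{1}{2}$, and then interpolates with the trivial $L^{\infty}$ bound on each $\mathcal{S}_{\operatorname*{Fourier}}^{s,\mathbf{u}_{k}}f_{k}$ (exactly as in the linear Proposition \ref{Kak equiv}) to reach $L^{q/3}$ for all $q>3$, whereas you attempt to obtain $L^{q/6}$ directly on the tube side by level-set splitting. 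Your reference to (\ref{interp}) is slightly misdirected --- that display runs interpolation in the opposite direction, super-critical down to critical --- but combining the $L^{1/2}$ Kakeya bound with the crude pointwise bound $\prod_{k}\sum_{T\in\mathbb{T}_{k}}\mathbf{1}_{T}\leq 2^{6s}$ does yield the required estimate, so this is a cosmetic rather than substantive difference.
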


\begin{theorem}
[Fourier square function analogue of Theorem \ref{big}]\label{SFA}The
modulated single scale Fourier square function extension Conjecture
\ref{ssFsfec} for the paraboloid $\mathbb{P}^{2}$ in $\mathbb{R}^{3}$ holds
\emph{if and only if} the modulated single scale Fourier square function
disjoint trilinear extension Conjecture \ref{ssFsftec} holds.
\end{theorem}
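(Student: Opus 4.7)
The forward implication from the linear Conjecture \ref{ssFsfec} to the trilinear Conjecture \ref{ssFsftec} is immediate by H\"older's inequality with exponents $(q,q,q)$: for any $\nu$-disjoint triple $(U_{1},U_{2},U_{3})$ and any $f_{k}\in L^{\infty}(U_{k})$,
\[
\left\| \prod_{k=1}^{3} \mathcal{S}_{\operatorname*{Fourier}}^{s,\mathbf{u}_{k}} f_{k} \right\|_{L^{q/3}(\mathbb{R}^{3})} \leq \prod_{k=1}^{3} \left\| \mathcal{S}_{\operatorname*{Fourier}}^{s,\mathbf{u}_{k}} f_{k} \right\|_{L^{q}(\mathbb{R}^{3})} \lesssim 2^{3\varepsilon s} \prod_{k=1}^{3}\|f_{k}\|_{L^{\infty}(U)},
\]
and the $\nu$-disjointness hypothesis plays no role. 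So I focus on the converse, trilinear $\Longrightarrow$ linear.

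My plan for the converse is to adapt the Bourgain--Guth pigeonholing argument of \cite[Section 2]{BoGu}, in the square-function form developed in \cite{RiSa}. Fix $q>3$ and $\varepsilon>0$, and let $\nu=\nu(q)\in(0,1)$ be the disjointness parameter supplied by Conjecture \ref{ssFsftec}. Choose an integer $K$ with $K^{-1}\approx\nu$ and partition $U$ into a family of $K^{-1}$-subsquares $\{Q_{j}\}$. For each $\xi\in\mathbb{R}^{3}$ I will use the following dichotomy on the partial sums $\mathcal{S}_{\operatorname*{Fourier}}^{s,\mathbf{u}}(f\mathbf{1}_{Q_{j}})(\xi)$: either a single square $Q^{*}(\xi)$ carries $\gtrsim K^{-C}$ of the full square function, or else three $\nu$-disjoint subsquares $(Q_{j_{1}},Q_{j_{2}},Q_{j_{3}})$ from $\{Q_{j}\}$ each contribute at least a $K^{-C}$ fraction. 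The crucial point is that, because $\nu$-disjointness is strictly weaker than $\nu$-transversality, the ``clustered near a line'' configuration that produces the difficult Case 2 of \cite[Section 2]{BoGu} is already an admissible $\nu$-disjoint triple, so no induction on an intermediate scale $K_{1}\ll K$ is required here.

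The trilinear branch is dispatched by direct appeal to Conjecture \ref{ssFsftec} applied to $(f\mathbf{1}_{Q_{j_{1}}}, f\mathbf{1}_{Q_{j_{2}}}, f\mathbf{1}_{Q_{j_{3}}})$, contributing a term of order $K^{C}\,2^{\varepsilon s}\|f\|_{L^{\infty}(U)}^{3}$. The single-square branch is handled by parabolic rescaling: after a translate-plus-dilate on $\mathbb{R}^{2}$ and a compensating affine change of variables on $\mathbb{R}^{3}$, the restriction of $\Phi$ to $Q^{*}(\xi)$ becomes $\Phi$ on all of $U$ but evaluated at the finer dyadic scale $s'=s-\log_{2}K$. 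Under this rescaling, the $2^{-s}$-squares $I\subset Q^{*}$ relabel as $2^{-s'}$-squares in $\mathcal{G}_{s'}[U]$, the pseudoprojections $\bigtriangleup_{I}$ transport to the analogous pseudoprojections at scale $s'$, and the modulations $u_{I}$ are transported to new modulations $\tilde u_{I}$ (the quadratic part of $\Phi$ contributes an affine twist, which is harmless since $\mathbf{u}$ in (\ref{FECUS}) is arbitrary). Summing the two branches over $\xi$ and then over $Q^{*}$, and accounting for a $K^{C}$ overlap, yields a recursion of the schematic form
\[
A(s) \leq K^{C}\, A(s-\log_{2}K) + K^{C}\,2^{\varepsilon s},
\]
where $A(s)$ denotes the best constant in (\ref{FECUS}) at scale $s$. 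Iterating $O(s/\log K)$ times and choosing $K$ large in terms of $(q,\varepsilon)$ closes the induction and produces Conjecture \ref{ssFsfec}.

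The main obstacle will be executing the parabolic rescaling at the level of the \emph{square function}: I must verify that the pseudoprojections $\bigtriangleup_{I}$ together with the attached Fourier translations $\tau_{u_{I}}$ transform in a coordinated way under the affine rescaling of $Q^{*}$ to $U$, so that the rescaled object is literally of the form appearing on the left of (\ref{FECUS}) at scale $s'$; in particular, the orthogonality property (\ref{pseudo}) must be preserved by the rescaled wavelet system. A secondary, but still delicate, point is the bootstrap itself, namely verifying that the $K^{C}$ overhead is polynomial (not exponential) in $K$ and can therefore be absorbed into $2^{\varepsilon s}$ for $K=K(q,\varepsilon)$ chosen once and for all. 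Once these two points are in hand, the recursion closes in the familiar Bourgain--Guth / Tao--Vargas--Vega manner and delivers the desired equivalence.
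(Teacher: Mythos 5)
Your broad strategy — Bourgain--Guth pigeonholing adapted to the square function and modulation framework of \cite{RiSa}, with the trilinear estimate fed to the ``spread out'' case and parabolic rescaling feeding the ``localized'' case — is precisely the route the paper takes, and you correctly identify the two technical pressure points (compatibility of $\bigtriangleup_{I}$ and the modulations $u_{I}$ with rescaling, and control of the $K$-dependent overhead). But there are two concrete gaps.

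First, the dichotomy you propose is not exhaustive. Between ``one square dominates'' and ``three $\nu$-disjoint squares each carry a $K^{-C}$ fraction'' there is a third, genuinely different configuration: two well-separated clusters each carrying near-maximal weight, with every square far from \emph{both} clusters negligible. No single square captures the mass, and no $\nu$-disjoint \emph{triple} exists. The paper handles this \emph{dipole interaction} as a separate Case 3, decomposing into the two clusters and applying parabolic rescaling to each; your proposal silently omits this case, and the omission cannot be patched by enlarging $C$ in your first alternative, since the maximal square always carries at least $K^{-2}$ of the total trivially, making that branch vacuous without a companion statement that the contribution far from $Q^{*}$ is small.

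Second, and more seriously, the recursion $A(s)\leq K^{C}A(s-\log_{2}K)+K^{C}2^{\varepsilon s}$ with $K^{C}\geq 1$ in front of the recursive term does not close: iterating $O(s/\log_{2}K)$ times produces a cumulative factor $K^{O(s/\log_{2}K)\cdot C}=2^{O(C)s}$, which is exponential in $s$ with a fixed exponent of order $C$, not absorbable into $2^{\varepsilon s}$ however large $K$ is. You flag this as ``a secondary, but still delicate, point,'' but it is in fact the crux. The resolution in the paper is that the coefficient multiplying the rescaled term is \emph{not} $K^{C}$ but a \emph{negative} power of $K$: the parabolic rescaling of a $K^{-1}$-square gains a factor $\rho^{2-4/q}=K^{-(2-4/q)}$, while summing over the $O(K^{2})$ squares only costs $K^{2/q}$ after taking $L^{q}$ norms, for a net coefficient $\approx K^{6/q-2}$. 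This is strictly less than $1$ precisely when $q>3$, so the paper can choose $\lambda\approx\log_{2}K$ large enough to make the coefficient $<\tfrac12$ and close the estimate by a \emph{single} absorption (equivalently, a one-step induction in which the coefficient is already $<1$), with no geometric series to sum. This is also why the paper needs $q>3$ strictly, a constraint your schematic recursion does not reveal. You should replace the schematic $K^{C}A(s-\log_{2}K)$ by the correct $K^{6/q-2}A_{s-\log_{2}K}^{(q)}$, observe that $6/q-2<0$ for $q>3$, and then the iteration collapses to a direct absorption; at that point your proposal aligns with the paper's proof.
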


The `only if' assertion follows from H\"{o}lder's inequality with exponents
$\left(  3,3,3\right)  $,
\begin{align*}
&  \left\Vert \mathcal{S}_{\operatorname*{Fourier}}^{s,\mathbf{u}_{1}}%
f_{1}\ \mathcal{S}_{\operatorname*{Fourier}}^{s,\mathbf{u}_{2}}f_{2}%
\ \mathcal{S}_{\operatorname*{Fourier}}^{s,\mathbf{u}_{3}}f_{3}\right\Vert
_{L^{\frac{q}{3}}\left(  \mathbb{R}^{3}\right)  }^{\frac{q}{3}}=\int
_{\mathbb{R}^{3}}\left(  \mathcal{S}_{\operatorname*{Fourier}}^{s,\mathbf{u}%
_{1}}f_{1}\right)  ^{\frac{q}{3}}\left(  \xi\right)  \ \left(  \mathcal{S}%
_{\operatorname*{Fourier}}^{s,\mathbf{u}_{2}}f_{2}\right)  ^{\frac{q}{3}%
}\left(  \xi\right)  \ \left(  \mathcal{S}_{\operatorname*{Fourier}%
}^{s,\mathbf{u}_{3}}f_{3}\right)  ^{\frac{q}{3}}\left(  \xi\right)  d\xi\\
&  \ \ \ \ \ \ \ \ \ \ \ \ \ \ \ \leq\left(  \int_{\mathbb{R}^{3}}\left(
\mathcal{S}_{\operatorname*{Fourier}}^{s,\mathbf{u}_{1}}f_{1}\right)
^{q}\left(  \xi\right)  d\xi\right)  ^{\frac{1}{3}}\ \left(  \int
_{\mathbb{R}^{3}}\left(  \mathcal{S}_{\operatorname*{Fourier}}^{s,\mathbf{u}%
_{2}}f_{2}\right)  ^{q}\left(  \xi\right)  d\xi\right)  ^{\frac{1}{3}%
}\ \left(  \int_{\mathbb{R}^{3}}\left(  \mathcal{S}_{\operatorname*{Fourier}%
}^{s,\mathbf{u}_{3}}f_{3}\right)  ^{q}\left(  \xi\right)  d\xi\right)
^{\frac{1}{3}}\\
&  \ \ \ \ \ \ \ \ \ \ \ \ \ \ \ \leq\left(  C_{\varepsilon,q}2^{\varepsilon
s}\left\Vert f_{1}\right\Vert _{L^{\infty}\left(  U_{1}\right)  }\right)
^{\frac{q}{3}}\ \left(  C_{\varepsilon,q}2^{\varepsilon s}\left\Vert
f_{2}\right\Vert _{L^{\infty}\left(  U_{2}\right)  }\right)  ^{\frac{q}{3}%
}\ \left(  C_{\varepsilon,q}2^{\varepsilon s}\left\Vert f_{3}\right\Vert
_{L^{\infty}\left(  U_{3}\right)  }\right)  ^{\frac{q}{3}},
\end{align*}
for any choice of $U_{k}$, and so in particular, for \emph{any} $0<\nu<1$ we
have,%
\[
\left\Vert \mathcal{S}_{\operatorname*{Fourier}}^{s,\mathbf{u}_{1}}%
f_{1}\ \mathcal{S}_{\operatorname*{Fourier}}^{s,\mathbf{u}_{2}}f_{2}%
\ \mathcal{S}_{\operatorname*{Fourier}}^{s,\mathbf{u}_{3}}f_{3}\right\Vert
_{L^{\frac{q}{3}}\left(  \mathbb{R}^{3}\right)  }\leq C_{\varepsilon,q}%
^{3}2^{3\varepsilon s}\left\Vert f_{1}\right\Vert _{L^{\infty}\left(
U_{1}\right)  }\left\Vert f_{2}\right\Vert _{L^{\infty}\left(  U_{1}\right)
}\left\Vert f_{3}\right\Vert _{L^{\infty}\left(  U_{1}\right)  }.
\]
for all $\nu$-disjoint triples $\left(  U_{1},U_{2},U_{3}\right)  \subset
U^{3}$.

The `if' assertion is a modification of the corresponding proof in \cite[proof
of Theorem 3]{RiSa}, which was in turn based on the pigeonholing argument in
\cite[Section 2]{BoGu}, in which bounded functions are here replaced with
single scale wavelet pseudoprojections.

We give a brief overview of the argument here, and present complete details in
the appendix below. More precisely, for $1<q<\infty$ and $s\in\mathbb{N} $, we
replace the local quantity $Q_{R}^{\left(  q\right)  }$ defined in \cite[proof
of Theorem 3]{RiSa}, and originally in \cite[Section 2]{BoGu}, with the single
scale quantity
\[
A_{s}^{\left(  q\right)  }\equiv\sup_{\mathcal{G}\in\operatorname*{Grid}}%
\sup_{\left\Vert f\right\Vert _{L^{\infty}\left(  U\right)  }\leq1}%
\sup_{\mathbf{u}\in\mathcal{V}}\left(  \int_{\mathbb{R}^{3}}\sum
_{I\in\mathcal{G}_{s}\left[  U\right]  }\left\vert \left(  \mathsf{M}%
_{\mathbf{u}}^{s}\Phi_{\ast}\bigtriangleup_{I}f\right)  ^{\wedge}\left(
\xi\right)  \right\vert ^{q}d\xi\right)  ^{\frac{1}{q}},
\]
where we now take a supremum over the modulations, and over all grids
$\mathcal{G}$, since a parabolic rescaling may change both the modulation and
the grid as we discuss below.

First note that $\left\Vert \bigtriangleup_{I}f\right\Vert _{L^{\infty}%
}\lesssim\left\Vert f\right\Vert _{L^{\infty}}\leq1$, so that we can start the
proof as in \cite[proof of Theorem 3]{RiSa}, which followed \cite[Section
2]{BoGu} almost verbatim at this point, but adapted to square functions and
modulations, and then proceed with \textbf{Case 1} in the same way as well.

The key to the success of the rescaling argument of Tao, Vargas and Vega in
\cite{TaVaVe} applied to $A_{s}^{\left(  q\right)  }$, instead of
$Q_{R}^{\left(  q\right)  }$, that is used in the \textbf{Case 2 and 3}
arguments, is that the projection $\bigtriangleup_{I}$ at a given scale $s$,
when rescaled by a dyadic number $\rho=2^{-n}$, is again a pseudoprojection at
scale $s-n$, but in a possibly different grid. Indeed, this is a consequence
of the translation and dilation invariance of the wavelets $\varphi_{I}$.
Writing $f_{\rho}\left(  x\right)  =f\left(  \rho x\right)  $, we have%
\begin{align*}
\left(  \varphi_{I}\right)  _{\rho}\left(  x\right)   &  =\frac{1}{\rho
}\varphi_{\frac{1}{\rho}I}\left(  x\right)  ,\\
\left\langle f,\varphi_{I}\right\rangle  &  =\int f\left(  \rho x\right)
\varphi_{I}\left(  \rho x\right)  d\rho x=\rho^{2}\left\langle f_{\rho
},\left(  \varphi_{I}\right)  _{\rho}\right\rangle ,
\end{align*}
so that%
\begin{align}
\left(  \bigtriangleup_{I}f\right)  _{\rho}\left(  x\right)   &
=\bigtriangleup_{I}f\left(  \rho x\right)  =\left\langle f,\varphi
_{I}\right\rangle \varphi_{I}\left(  \rho x\right)  =\rho^{2}\left\langle
f_{\rho},\left(  \varphi_{I}\right)  _{\rho}\right\rangle \left(  \varphi
_{I}\right)  _{\rho}\left(  x\right) \label{par res}\\
&  =\left\langle f_{\rho},\rho\left(  \varphi_{I}\right)  _{\rho}\right\rangle
\rho\left(  \varphi_{I}\right)  _{\rho}\left(  x\right)  =\left\langle
f_{\rho},\varphi_{\frac{1}{\rho}I}\right\rangle \varphi_{\frac{1}{\rho}%
I}\left(  x\right)  =\left(  \bigtriangleup_{\frac{1}{\rho}I}\left(  f_{\rho
}\right)  \right)  \left(  x\right)  .\nonumber
\end{align}

As mentioned above, the dilates relative to the origin, of the level $s$
squares of a grid $\mathcal{G}$ by the factor $2^{n}$, are the level $s-n$
squares of a possibly different grid $\mathcal{G}^{\prime}$, so that
\[
\operatorname*{dil}_{2^{n}}\mathcal{G}_{s}\left[  U_{1}\right]  \subset
\mathcal{G}_{s-n}^{\prime}\left[  U\right]  ,\ \ \ \ \ \text{if }2^{n}%
U_{1}\subset U.
\]
It follows that the family of singular measures on the paraboloid that are
pushforwards $\Phi_{\ast}\left(  \sum_{I\in\mathcal{G}_{s}\left[
U_{1}\right]  }\varphi_{I}\left(  x\right)  dx\right)  $ of the planar
measures $\sum_{I\in\mathcal{G}_{s}\left[  U_{1}\right]  }\varphi_{I}\left(
x\right)  dx $, are again of this form under parabolic rescalings $\rho
=2^{-n}$, provided that $2^{n}U_{1}\subset U$. The same is true for parabolic
dilations relative to other points on the paraboloid. Finally, the set of
compositions $\mathsf{M}_{\mathbf{u}}^{s}\Phi_{\ast}$ of modulations
$\mathsf{M}_{\mathbf{u}}^{s}$ with the pushforward $\Phi_{\ast}$, are
preserved under dyadic parabolic rescalings, and so altogether, the quantities
$A_{R}^{\left(  q\right)  }$ scale just as in the argument in \cite[Case 2 of
the proof of Theorem 3]{RiSa}, see also \cite[Section 2]{BoGu}, and
\textbf{Case 2} can proceed as in \cite[proof of Theorem 3]{RiSa}.

The argument in \textbf{Case 3} of \cite[proof of Theorem 3]{RiSa} again
relies on parabolic rescaling, and one can easily complete the proof of
Theorem \ref{SFA} with the above modifications in mind. Complete details of
this argument are given in the final section of the paper.

\section{Linear and trilinear equivalence of Kakeya maximal operator
conjectures}

Here we prove the two horizontal equivalences $\Longleftrightarrow$ in diagram
(\ref{diag}), namely the equivalence of the single scale modulated Fourier
square function extension conjecture with the dual form of the Kakeya maximal
operator conjecture, in both the linear setting and the trilinear setting.

Given a family of tubes $\mathbb{T}$, recall the definition of the tube square
function $\mathcal{S}\mathbb{T}$ in Definition \ref{def tube square}, and
given a square $I\in\mathcal{G}_{s}\left[  U\right]  $, recall the definition
of the tube $\widehat{I}$ in Definition \ref{I hat}.

\begin{proposition}
\label{Kak equiv}The dual form of the Kakeya maximal operator Conjecture
\ref{linear Kakeya} holds \emph{if and only if} the single scale modulated
Fourier square function extension Conjecture \ref{ssFsfec} holds.
\end{proposition}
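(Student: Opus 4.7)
The `if' direction is precisely Proposition \ref{Kak Four}, so the task is the converse: assuming Kakeya, deduce the single scale Fourier square function bound. My plan is \emph{dual} to that of Proposition \ref{Kak Four}: rather than lower-bound $|\widehat{\Phi_{\ast}\bigtriangleup_{I}f}|$ by an indicator of the tube $\widehat{I}$, I will upper-bound it by a Schwartz bump adapted to $\widehat{I}$, and then pointwise dominate $\mathcal{S}_{\operatorname*{Fourier}}^{s,\mathbf{u}}f$ by a dual Kakeya square function of the family $\{\widehat{I}+u_{I}\}_{I}$.

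The key pointwise estimate goes as follows. The pseudoprojection $\bigtriangleup_{I}f=\langle f,\varphi_{I}\rangle\varphi_{I}$ is supported in $2I$ with $\|\bigtriangleup_{I}f\|_{L^{\infty}}\lesssim\|f\|_{L^{\infty}}$ and $\|\bigtriangleup_{I}f\|_{L^{1}}\approx 2^{-2s}\|f\|_{L^{\infty}}$. Since the patch $\Phi(2I)$ deviates from its tangent plane at $\Phi(c_{I})$ by at most $O(2^{-2s})$, standard (non)stationary phase analysis of the oscillatory integral defining $\widehat{\Phi_{\ast}\bigtriangleup_{I}f}$ yields
\[
\bigl|\widehat{\Phi_{\ast}\bigtriangleup_{I}f}(\xi)\bigr|\lesssim 2^{-2s}\|f\|_{L^{\infty}}\,\psi_{\widehat{I}}(\xi),
\]
where $\psi_{\widehat{I}}$ is an $L^{\infty}$-normalized Schwartz bump adapted to $\widehat{I}$. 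The modulation $\mathsf{M}_{\mathbf{u}}^{s}$ merely translates the Fourier transform by $u_{I}$, so squaring and summing in $I$ gives
\[
\bigl(\mathcal{S}_{\operatorname*{Fourier}}^{s,\mathbf{u}}f(\xi)\bigr)^{2}\lesssim 2^{-4s}\|f\|_{L^{\infty}}^{2}\sum_{I\in\mathcal{G}_{s}[U]}\psi_{\widehat{I}}(\xi-u_{I})^{2}.
\]

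Taking $L^{\frac{q}{2}}$ norms and decomposing the Schwartz tails into dyadic shells around the tubes $T_{I}:=\widehat{I}+u_{I}$ (at only a constant loss per shell) reduces the estimate to bounding $\|\sum_{I}\mathbf{1}_{T_{I}}\|_{L^{\frac{q}{2}}(\mathbb{R}^{3})}$. The $2^{-s}$-separation of the centers $c_{I}\in\mathcal{G}_{s}^{\ast}[U]$, combined with the bi-Lipschitz Gauss map of $\Phi$, ensures that the orientations of the $T_{I}$ are $2^{-s}$-separated. Rescaling by the factor $2^{-2s}$ turns $\{T_{I}\}$ into a $2^{-s}$-separated family of $2^{-s}$-tubes admissible for Conjecture \ref{linear Kakeya} with $\delta=2^{-s}$. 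Interpolating the critical Kakeya bound in $L^{\frac{3}{2}}$ against the trivial $L^{\infty}$ bound $\sum_{T'}\mathbf{1}_{T'}\leq\#\mathbb{T}\lesssim 2^{2s}$ yields an $L^{\frac{q}{2}}$ bound for every $\frac{q}{2}>\frac{3}{2}$; combining with the prefactor $2^{-4s}$ and the rescaling Jacobian $2^{6s}$ produces an exponent of the form $2^{(-2+6/q)s+O(\varepsilon)s}$, which is $\lesssim 2^{2\varepsilon s}$ precisely when $q>3$. This delivers $\|\mathcal{S}_{\operatorname*{Fourier}}^{s,\mathbf{u}}f\|_{L^{q}}\lesssim 2^{\varepsilon s}\|f\|_{L^{\infty}}$, which is Conjecture \ref{ssFsfec}.

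The main technical nuisance is the dyadic-shell handling of the Schwartz tails outside $T_{I}$; this is routine but must be arranged carefully enough to avoid any loss worse than constant in $s$. It is worth noting that the condition $q>3$ in Conjecture \ref{ssFsfec} is precisely the dual, via $p=\frac{q}{2}$, of the supercritical interpolation exponent $p>\frac{3}{2}$ used above, so the equivalence is tight at the critical index and there is no room to spare.
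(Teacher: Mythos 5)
Your proposal is correct and takes essentially the same route as the paper: both use rapid decay of the Fourier transform to dominate $\mathcal{S}_{\operatorname*{Fourier}}^{s,\mathbf{u}}f$ pointwise by $2^{-2s}$ times the dual Kakeya square function of the translated tubes $\widehat{I}+u_I$ (plus a rapidly decaying tail supported on dilated tubes), then rescale and invoke Conjecture \ref{linear Kakeya}. The only organizational difference is where the interpolation with the trivial endpoint occurs: the paper first proves the $L^{3}$ bound on $\mathcal{S}_{\operatorname*{Fourier}}^{s,\mathbf{u}}f$ (taking $q/2=3/2$ exactly) and then interpolates with the $L^{\infty}$ bound $\|\mathcal{S}_{\operatorname*{Fourier}}^{s,\mathbf{u}}f\|_{L^{\infty}}\lesssim\|f\|_{L^{\infty}}$ to reach $q>3$, whereas you interpolate $L^{3/2}$ with the trivial $L^{\infty}$ cardinality bound $\sum_{T}\mathbf{1}_{T}\lesssim 2^{2s}$ on the Kakeya side and then take $L^{q/2}$ norms directly; these are the same calculation in a different order, and your exponent bookkeeping ($-2+6/q+O(\varepsilon)$ for the squared norm) is correct and reproduces the paper's growth factor $2^{\varepsilon' s}$ precisely for $q>3$.
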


\begin{proof}
The `if' assertion was already proved in Proposition (\ref{Kak dual}) above.
Conversely, given $\varepsilon>0$, we use the rapid decay of the Fourier
transform away from the associated tubes, to obtain that%
\begin{align}
&  \left\Vert \mathcal{S}_{\operatorname*{Fourier}}^{s,\mathbf{u}}f\right\Vert
_{L^{3}\left(  \mathbb{R}^{3}\right)  }=\left\Vert \left(  \sum_{I\in
\mathcal{G}_{s}\left[  U\right]  }\left\vert \tau_{u_{I}}\widehat{\Phi_{\ast
}\bigtriangleup_{I}f}\left(  \xi\right)  \right\vert ^{2}\right)  ^{\frac
{1}{2}}\right\Vert _{L^{3}\left(  \mathbb{R}^{3}\right)  }\label{rap}\\
&  \lesssim2^{-2s}\left(  \int_{\mathbb{R}^{3}}\left(  \sum_{I\in
\mathcal{G}_{s}\left[  U\right]  }\left\vert \mathbf{1}_{\tau_{u_{I}}%
\widehat{I}}\left(  \xi\right)  \right\vert ^{2}\right)  ^{\frac{3}{2}}%
d\xi\right)  ^{\frac{1}{3}}\left\Vert f\right\Vert _{L^{\infty}\left(
U\right)  }\nonumber\\
&  \ \ \ \ \ \ \ \ \ \ \ \ \ \ \ \ \ \ \ \ \ \ \ \ \ \ \ \ \ \ +\sum
_{m=1}^{\infty}a_{m}2^{-2s}\left(  \int_{\mathbb{R}^{3}}\left(  \sum
_{I\in\mathcal{G}_{s}\left[  U\right]  }\left\vert \mathbf{1}_{2^{m}%
\tau_{u_{I}}\widehat{I}}\left(  \xi\right)  \right\vert ^{2}\right)
^{\frac{3}{2}}d\xi\right)  ^{\frac{1}{3}}\left\Vert f\right\Vert _{L^{\infty
}\left(  U\right)  }\nonumber
\end{align}
which using the change of variable $\xi\rightarrow2^{2s}\xi$, is at most%
\begin{align*}
&  2^{-2s}\left(  \int_{\mathbb{R}^{3}}\left(  \sum_{I\in\mathcal{G}%
_{s}\left[  U\right]  }\left\vert \mathbf{1}_{2^{-2s}\tau_{u_{I}}\widehat{I}%
}\left(  \xi\right)  \right\vert ^{2}\right)  ^{\frac{3}{2}}\left(
2^{2s}\right)  ^{3}d\xi\right)  ^{\frac{1}{3}}\left\Vert f\right\Vert
_{L^{\infty}\left(  U\right)  }\\
&  \ \ \ \ \ \ \ \ \ \ \ \ \ \ \ \ \ \ \ \ \ \ \ \ \ \ \ \ \ \ +\sum
_{m=1}^{\infty}a_{m}2^{-2s}\left(  \int_{\mathbb{R}^{3}}\left(  \sum
_{I\in\mathcal{G}_{s}\left[  U\right]  }\left\vert \mathbf{1}_{2^{m}%
2^{-2s}\tau_{u_{I}}\widehat{I}}\left(  \xi\right)  \right\vert ^{2}\right)
^{\frac{3}{2}}\left(  2^{2s}\right)  ^{3}d\xi\right)  ^{\frac{1}{3}}\left\Vert
f\right\Vert _{L^{\infty}\left(  U\right)  }\\
&  =\left(  \int_{\mathbb{R}^{3}}\left(  \sum_{I\in\mathcal{G}_{s}\left[
U\right]  }\mathbf{1}_{T_{I}}\left(  \xi\right)  \right)  ^{\frac{3}{2}}%
d\xi\right)  ^{\frac{1}{3}}\left\Vert f\right\Vert _{L^{\infty}\left(
U\right)  }+\sum_{m=1}^{\infty}a_{m}\left(  \int_{\mathbb{R}^{3}}\left(
\sum_{I\in\mathcal{G}_{s}\left[  U\right]  }\mathbf{1}_{2^{m}T_{I}}\left(
\xi\right)  \right)  ^{\frac{3}{2}}d\xi\right)  ^{\frac{1}{3}}\left\Vert
f\right\Vert _{L^{\infty}\left(  U\right)  }\\
&  \lesssim\left\Vert \mathcal{S}\mathbb{T}\right\Vert _{L^{\frac{3}{2}%
}\left(  \mathbb{R}^{3}\right)  }^{\frac{1}{2}}\left\Vert f\right\Vert
_{L^{\infty}\left(  U\right)  }+\sum_{m=1}^{\infty}a_{m}C_{0}^{m}\left(
\int_{\mathbb{R}^{3}}\mathcal{S}\mathbb{T}\left(  \xi\right)  ^{\frac{3}{2}%
}d\xi\right)  ^{\frac{1}{3}}\left\Vert f\right\Vert _{L^{\infty}\left(
U\right)  }\\
&  \lesssim\left(  \left\Vert \mathcal{S}\mathbb{T}\right\Vert _{L^{\frac
{3}{2}}\left(  \mathbb{R}^{3}\right)  }^{\frac{1}{2}}+\sum_{m=1}^{\infty}%
a_{m}C_{0}^{m}\left\Vert \mathcal{S}\mathbb{T}\right\Vert _{L^{\frac{3}{2}%
}\left(  \mathbb{R}^{3}\right)  }^{\frac{1}{2}}\right)  \left\Vert
f\right\Vert _{L^{\infty}\left(  U\right)  }\lesssim\sqrt{C_{\varepsilon
}\delta^{-\varepsilon}}\left\Vert f\right\Vert _{L^{\infty}\left(  U\right)
}\lesssim2^{\frac{1}{2}\varepsilon s}\left\Vert f\right\Vert _{L^{\infty
}\left(  U\right)  },
\end{align*}
where $\delta=2^{-s}$, since the coefficients $a_{m}$ are rapidly decreasing.
Here $C_{0}$ is a positive constant related to the geometry of expanded tubes.
Now we `interpolate' this estimate
\[
\left\Vert \mathcal{S}_{\operatorname*{Fourier}}^{s,\mathbf{u}}f\right\Vert
_{L^{3}\left(  \mathbb{R}^{3}\right)  }\lesssim2^{\frac{1}{2}\varepsilon
s}\left\Vert f\right\Vert _{L^{\infty}\left(  U\right)  }\ ,
\]
with the trivial $L^{\infty}$ estimate%
\begin{align*}
&  \left\Vert \mathcal{S}_{\operatorname*{Fourier}}^{s,\mathbf{u}}f\right\Vert
_{L^{\infty}\left(  \mathbb{R}^{3}\right)  }\leq\sup_{\xi\in\mathbb{R}^{3}%
}\mathcal{S}_{\operatorname*{Fourier}}^{s,\mathbf{u}}f\left(  \xi\right)
\leq\left(  \sum_{I\in\mathcal{G}_{s}\left[  U\right]  }\left\vert \tau
_{u_{I}}\widehat{\Phi_{\ast}\bigtriangleup_{I}f}\left(  \xi\right)
\right\vert ^{2}\right)  ^{\frac{1}{2}}\\
&  \leq\left(  \sum_{I\in\mathcal{G}_{s}\left[  U\right]  }\left\Vert
\Phi_{\ast}\bigtriangleup_{I}f\right\Vert ^{2}\right)  ^{\frac{1}{2}}%
\lesssim\left(  \sum_{I\in\mathcal{G}_{s}\left[  U\right]  }\left\Vert
\bigtriangleup_{I}f\right\Vert _{L^{1}\left(  \mathbb{R}^{3}\right)  }%
^{2}\right)  ^{\frac{1}{2}}\lesssim\left(  \sum_{I\in\mathcal{G}_{s}\left[
U\right]  }2^{-2s}\left\Vert f\right\Vert _{L^{\infty}\left(  U\right)  }%
^{2}\right)  ^{\frac{1}{2}}\lesssim\left\Vert f\right\Vert _{L^{\infty}\left(
U\right)  }\ ,
\end{align*}
to obtain (\ref{FECUS}) for all $q>3$, and with a growth factor
$2^{\varepsilon^{\prime}s}$ for $0<\varepsilon^{\prime}<1$ arbitrarily small.
\end{proof}

\begin{proposition}
\label{Kak tri equiv}The disjoint trilinear dual form of the Kakeya maximal
operator Conjecture \ref{trilinear Kakeya} holds \emph{if and only if} the
modulated single scale disjoint\emph{\ }trilinear Fourier square function
Conjecture \ref{ssFsftec} holds.
\end{proposition}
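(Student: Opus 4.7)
The plan is to run in parallel the arguments of Propositions \ref{Kak Four} and \ref{Kak equiv}, applied simultaneously to three tube families. The new geometric ingredient is the bi-Lipschitz correspondence $c \mapsto \mathbf{n}_{\Phi}(\Phi(c))$ from parameter squares in $U$ to spherical orientation patches, which identifies (up to absolute constants) a $\nu$-disjoint triple of squares $(U_{1},U_{2},U_{3})$ in the sense of (\ref{nu disjoint'}) with a $\nu$-disjoint triple of orientation patches $(\Omega_{1},\Omega_{2},\Omega_{3})$ in the sense of (\ref{nu disjoint}).

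For the `if' direction (Conjecture \ref{ssFsftec} implies Conjecture \ref{trilinear Kakeya}), I would take $\nu$-disjoint families $\mathbb{T}_{k}$ of $\delta$-separated $\delta$-tubes with $\delta = 2^{-s}$, choose $\nu$-disjoint squares $U_{k} \subset U$ whose image under the normal map covers $\Omega_{k}$, and then construct $f_{k} \in L^{\infty}(U_{k})$ with $\|f_{k}\|_{L^{\infty}(U_{k})} \leq 1$ and modulations $\mathbf{u}_{k} \in \mathcal{V}_{s}$ exactly as in the proof of Proposition \ref{Kak Four}, so as to obtain the pointwise bound
\[
\mathcal{S}\mathbb{T}_{k}(2^{s}\times 2^{s}\times 2^{2s})(\xi) \;\leq\; C\, 2^{2s}\, \mathcal{S}_{\operatorname*{Fourier}}^{s,\mathbf{u}_{k}} f_{k}(\xi) \qquad \text{for each } k = 1,2,3.
\]
Multiplying these three pointwise inequalities, applying Conjecture \ref{ssFsftec} at some $q > 3$, and rescaling $\xi \to 2^{2s}\xi$ to return from $(2^{s}\times 2^{s}\times 2^{2s})$-tubes to the original $\delta$-tubes, I would then obtain an $L^{q/6}$ bound on $\prod_{k}\sum_{T_{k}\in\mathbb{T}_{k}}\mathbf{1}_{T_{k}}$ with loss $2^{O(\varepsilon)s}$. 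An interpolation argument analogous to (\ref{interp}), splitting on $\{\prod_{k}\sum_{T_{k}}\mathbf{1}_{T_{k}} \leq 1\}$ and its complement and using the integer-valuedness of the product together with the trivial support estimate $|\bigcup_{T_{k}\in\mathbb{T}_{k}} T_{k}| \leq |\mathbb{T}_{k}|\delta^{2} \lesssim 1$, brings the exponent down to the desired $L^{1/2}$ bound (\ref{tri Kak dual}).

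For the `only if' direction (Conjecture \ref{trilinear Kakeya} implies Conjecture \ref{ssFsftec}), I would apply the rapid-Fourier-decay factorization of Proposition \ref{Kak equiv} to each of the three factors: for each $k$,
\[
\mathcal{S}_{\operatorname*{Fourier}}^{s,\mathbf{u}_{k}} f_{k}(\xi) \;\lesssim\; 2^{-2s}\|f_{k}\|_{L^{\infty}(U_{k})} \sum_{m\geq 0} a_{m} \Bigl(\sum_{I\in\mathcal{G}_{s}[U_{k}]} \mathbf{1}_{2^{m}\tau_{u_{I}}\widehat{I}}(\xi)\Bigr)^{1/2},
\]
with rapidly decreasing coefficients $a_{m}$. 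Expanding the triple product as a multi-index sum in $(m_{1},m_{2},m_{3})$ and performing the common rescaling $\xi \to 2^{2s}\xi$ reduces matters to bounding the integral of a product of three tube square functions whose tubes are dyadic expansions (by $2^{m_{k}}$) of $\delta$-tubes, with orientations still lying in the original $\nu$-disjoint spherical patches $\Omega_{k}$. A further $2^{-m_{k}}$ rescaling on each factor puts these into the form required by Conjecture \ref{trilinear Kakeya}, whose application yields a bound in which the polynomial powers of $2^{m_{k}}$ are absorbed by the rapid decay of $a_{m_{k}}$. This gives the endpoint bound $\|\prod_{k}\mathcal{S}_{\operatorname*{Fourier}}^{s,\mathbf{u}_{k}} f_{k}\|_{L^{1}} \lesssim 2^{\varepsilon s}$ corresponding to $q = 3$; interpolating with the trivial $L^{\infty}$ bound $\mathcal{S}_{\operatorname*{Fourier}}^{s,\mathbf{u}_{k}} f_{k} \lesssim \|f_{k}\|_{L^{\infty}(U_{k})}$, exactly as at the end of Proposition \ref{Kak equiv}, then produces (\ref{single tri Four}) for every $q > 3$.

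The hard part I anticipate is the bookkeeping in the `only if' direction: carefully tracking the two distinct scale changes (the common $2^{2s}$-rescaling and the per-factor $2^{-m_{k}}$-rescaling), verifying that afterwards the three tube families remain $\delta$-separated with orientation patches still $\nu$-disjoint with parameters uniform in $(m_{1},m_{2},m_{3})$, so that Conjecture \ref{trilinear Kakeya} applies with a single constant $C_{\varepsilon,\nu}$ and the geometric factors from the expansions can be summed against the rapid decay of the $a_{m}$ with uniform control.
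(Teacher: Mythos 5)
Your proposal follows the paper's route in all essential respects, and the `if' direction---the pointwise domination $\mathcal{S}\mathbb{T}_{k}\leq C2^{2s}\mathcal{S}_{\operatorname*{Fourier}}^{s,\mathbf{u}_{k}}f_{k}$ as in (\ref{square point}), multiplying over $k$, parabolic rescaling $\xi\to 2^{2s}\xi$, and then the splitting/interpolation step as in (\ref{interp})---is exactly what the paper does. One small omission: you should acknowledge that Conjecture \ref{ssFsftec} only supplies a $\nu_{q}$ depending on $q$, whereas Conjecture \ref{trilinear Kakeya} is quantified over all $\nu$, so one has to argue (as the paper does by way of the monotonicity $\mathcal{K}_{\operatorname*{disj}\nu'}^{\ast}\Rightarrow\mathcal{K}_{\operatorname*{disj}\nu}^{\ast}$ for $\nu'\leq\nu$ together with Remark \ref{param}) that the $\nu_{q}$ in the Fourier hypothesis can be taken smaller than the given $\nu$.

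The substantive issue is in your `only if' direction. You propose, after the common $\xi\to 2^{2s}\xi$ rescaling, to perform ``a further $2^{-m_{k}}$ rescaling on each factor.'' But the three factors are functions of the \emph{same} variable $\xi$, so no change of variable in the integration can dilate the three tube collections by three different amounts $2^{-m_{1}},2^{-m_{2}},2^{-m_{3}}$ simultaneously. Trying to do so is exactly the difficulty you flag at the end (``verifying that afterwards the three tube families remain $\delta$-separated... uniform in $(m_{1},m_{2},m_{3})$''), and that difficulty is not a bookkeeping nuisance but a sign that the step fails. The paper avoids it entirely: instead of rescaling, it uses a pointwise covering observation, namely that the $2^{m_{k}}$-dilated tube $2^{m_{k}}\tau_{\mathbf{u}_{k}}\widehat{I}$ is covered by $\lesssim C_{0}^{m_{k}}$ translates of the undilated tube $\tau_{\mathbf{u}_{k}}\widehat{I}$, so that $\sum_{I_{k}}\mathbf{1}_{2^{m_{k}}\tau_{\mathbf{u}_{k}}\widehat{I}}\lesssim C_{0}^{m_{k}}\sum_{I_{k}}\mathbf{1}_{\tau_{\mathbf{u}_{k}}\widehat{I}'}$ (after splitting into $C_{0}^{m_{k}}$ $\delta$-separated subfamilies to avoid repeated orientations). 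This leaves all three factors at the \emph{same} scale, and the Kakeya trilinear bound applies once, to the undilated collections, with a total geometric factor $C_{0}^{m_{1}+m_{2}+m_{3}}$ that the triangle inequality and the rapid decay of $a_{m_{1}}a_{m_{2}}a_{m_{3}}$ absorb. Replacing your ``per-factor rescaling'' with this covering/pointwise-domination step makes the argument work; the concluding interpolation with the trivial $L^{\infty}$ bound, as you describe, is then the same as the paper's.
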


\begin{proof}
The `if' assertion is similar to the argument proving (\ref{Kak dual}) in
Proposition \ref{Kak Four}, but with trilinearity in place of linearity.
Indeed, given $0<\varepsilon,\nu<1$, we must show that $\mathcal{K}%
_{\operatorname*{disj}\nu}^{\ast}\left(  \otimes_{3}L^{\infty}\rightarrow
L^{\frac{3}{2}};\varepsilon\right)  $ holds. Since $\mathcal{K}%
_{\operatorname*{disj}\nu^{\prime}}^{\ast}\left(  \otimes_{3}L^{\infty
}\rightarrow L^{\frac{3}{2}};\varepsilon^{\prime}\right)  $ implies
$\mathcal{K}_{\operatorname*{disj}\nu}^{\ast}\left(  \otimes_{3}L^{\infty
}\rightarrow L^{\frac{3}{2}};\varepsilon\right)  $ for $\varepsilon^{\prime
}\leq\varepsilon$ and $\nu^{\prime}\leq\nu$, it is enough to show that there
exists $\nu^{\prime}\in\left(  0,\nu\right)  $ such that $\mathcal{K}%
_{\operatorname*{disj}\nu^{\prime}}^{\ast}\left(  \otimes_{3}L^{\infty
}\rightarrow L^{\frac{3}{2}};\varepsilon^{\prime}\right)  $ holds for all
$0<\varepsilon^{\prime}<1 $. For this we fix $q_{\varepsilon}\in\left(
3,3+\varepsilon\right)  $. Then by Conjecture \ref{ssFsfec} there is
$\nu^{\prime}>0$ depending on $q_{\varepsilon}$ such that
(\ref{single tri Four}) holds for all $0<\varepsilon^{\prime}<1$, and in light
of Remark \ref{param}, we may assume $\nu^{\prime}<\nu$. Thus if $\left(
\mathbb{T}_{1},\mathbb{T}_{2},\mathbb{T}_{3}\right)  $ is a $\nu^{\prime}%
$-disjoint triple of families of $2^{-s}$-separated $2^{-s}$-tubes, and if
$\left(  U_{1},U_{2},U_{3}\right)  $ is a $\nu^{\prime}$-disjoint triple of
squares in $\mathcal{G}\left[  U\right]  $ such that the orientations of the
tubes in $\mathbb{T}_{k}$ are normal to some point on the surface $\Phi\left(
U_{k}\right)  $, we have%
\begin{align*}
&  \left(  \int_{\mathbb{R}^{3}}\left(  \mathcal{S}_{\operatorname*{Fourier}%
}^{s,\mathbf{u}_{1}}f_{1}\left(  \xi\right)  \ \mathcal{S}%
_{\operatorname*{Fourier}}^{s,\mathbf{u}_{2}}f_{2}\left(  \xi\right)
\ \mathcal{S}_{\operatorname*{Fourier}}^{s,\mathbf{u}_{3}}f_{3}\left(
\xi\right)  \right)  ^{\frac{q_{\varepsilon}}{3}}\ d\xi\right)  ^{\frac
{3}{q_{\varepsilon}}}\\
&  \ \ \ \ \ \ \ \ \ \ \ \ \ \ \ \ \ \ \ \ \lesssim2^{\varepsilon^{\prime}%
s}\left\Vert f_{1}\right\Vert _{L^{\infty}\left(  U_{1}\right)  }\left\Vert
f_{2}\right\Vert _{L^{\infty}\left(  U_{2}\right)  }\left\Vert f_{3}%
\right\Vert _{L^{\infty}\left(  U_{3}\right)  }\ ,
\end{align*}
with $f_{k}\equiv\sum_{I\in G_{s}\left[  U_{k}\right]  }\bigtriangleup_{I}1$.
Then using the pointwise square function estimate (\ref{square point}),
\[
\mathcal{S}\mathbb{T}\left(  2^{s}\times2^{s}\times2^{2s}\right)  \left(
\xi\right)  \leq C2^{2s}\mathcal{S}_{\operatorname*{Fourier}}^{s,\mathbf{u}%
}f\left(  \xi\right)  ,
\]
we obtain from this that,%
\begin{align*}
&  2^{-6s}\left\Vert \prod_{k=1}^{3}\mathcal{S}\mathbb{T}_{k}\left(
2^{s}\times2^{s}\times2^{2s}\right)  \right\Vert _{L^{\frac{q_{\varepsilon}%
}{3}}\left(  \mathbb{R}^{3}\right)  }=\left\Vert \prod_{k=1}^{3}%
2^{-2s}\mathcal{S}\mathbb{T}_{k}\left(  2^{s}\times2^{s}\times2^{2s}\right)
\right\Vert _{L^{\frac{q_{\varepsilon}}{3}}\left(  \mathbb{R}^{3}\right)  }\\
&  \lesssim\left\Vert \prod_{k=1}^{3}\mathcal{S}_{\operatorname*{Fourier}%
}^{s,\mathbf{u}_{k}}f_{k}\left(  \xi\right)  \right\Vert _{L^{\frac{q_{3}}{3}%
}\left(  \mathbb{R}^{3}\right)  }\lesssim2^{\varepsilon^{\prime}s}\left\Vert
f_{1}\right\Vert _{L^{\infty}}\left\Vert f_{2}\right\Vert _{L^{\infty}%
}\left\Vert f_{3}\right\Vert _{L^{\infty}}\lesssim2^{\varepsilon^{\prime}s}\ ,
\end{align*}
for all $\nu^{\prime}$-disjoint families of $2^{-s}$-separatede $2^{-s}%
$-tubes, and hence by rescaling that%
\begin{align*}
&  \left\Vert \prod_{k=1}^{3}\mathcal{S}\mathbb{T}_{k}\left(  2^{-s}%
\times2^{-s}\times1\right)  \right\Vert _{L^{\frac{q_{\varepsilon}}{3}}\left(
\mathbb{R}^{3}\right)  }=\left(  \int_{\mathbb{R}^{3}}\left(  \prod_{k=1}%
^{3}\mathcal{S}\mathbb{T}_{k}\left(  2^{-s}\times2^{-s}\times1\right)  \left(
\xi\right)  \right)  ^{\frac{q_{\varepsilon}}{3}}d\xi\right)  ^{\frac
{3}{q_{\varepsilon}}}\\
&  =\left(  \int_{\mathbb{R}^{3}}\left(  \prod_{k=1}^{3}\mathcal{S}%
\mathbb{T}_{k}\left(  2^{-s}\times2^{-s}\times1\right)  \left(  \frac
{\xi^{\prime}}{2^{2s}}\right)  \right)  ^{\frac{q_{\varepsilon}}{3}}d\frac
{\xi^{\prime}}{2^{2s}}\right)  ^{\frac{3}{q_{\varepsilon}}}%
\ \ \ \ \ \text{(with }\xi=\frac{\xi^{\prime}}{2^{2s}}\text{)}\\
&  =\left(  \int_{\mathbb{R}^{3}}\left(  \prod_{k=1}^{3}\mathcal{S}%
\mathbb{T}_{k}\left(  2^{s}\times2^{s}\times2^{2s}\right)  \left(  \xi
^{\prime}\right)  \right)  ^{\frac{q_{\varepsilon}}{3}}2^{-6s}d\xi^{\prime
}\right)  ^{\frac{3}{q_{\varepsilon}}}\\
&  =\left(  2^{-6s}\right)  ^{\frac{3}{q_{\varepsilon}}}\left\Vert \prod
_{k=1}^{3}\mathcal{S}\mathbb{T}_{k}\left(  2^{s}\times2^{s}\times
2^{2s}\right)  \right\Vert _{L^{\frac{q_{\varepsilon}}{3}}\left(
\mathbb{R}^{3}\right)  }\lesssim2^{\left(  6-\frac{18}{q_{\varepsilon}%
}\right)  s}2^{\varepsilon^{\prime}s}<2^{\left(  6-\frac{18}{3+\varepsilon
}+\varepsilon^{\prime}\right)  s}=2^{\left(  \frac{6\varepsilon}%
{3+\varepsilon}+\varepsilon^{\prime}\right)  s}<2^{3\varepsilon s},
\end{align*}
if we take $\varepsilon^{\prime}=\varepsilon$, since $q_{\varepsilon
}<3+\varepsilon$.

Arguing as in (\ref{interp}) we now obtain (\ref{tri Kak dual}) for this
choice of $\nu^{\prime}$. Indeed, with
\[
F\left(  \xi\right)  \equiv\prod_{k=1}^{3}\left(  \sum_{T_{k}\in\mathbb{T}%
_{k}\left(  2^{-s}\times2^{-s}\times1\right)  }\mathbf{1}_{T_{k}}\left(
\xi\right)  \right)  ^{\frac{1}{3}},
\]
we have using $q_{\varepsilon}>3$ that
\begin{align*}
&  \int_{\mathbb{R}^{3}}\prod_{k=1}^{3}\left(  \sum_{T_{k}\in\mathbb{T}%
_{k}\left(  2^{-s}\times2^{-s}\times1\right)  }\mathbf{1}_{T_{k}}\left(
\xi\right)  \right)  ^{\frac{1}{2}}d\xi=\int_{\mathbb{R}^{3}}F\left(
\xi\right)  ^{\frac{3}{2}}d\xi\\
&  =\left\{  \int_{\left\{  F\leq1\right\}  }+\int_{\left\{  F>1\right\}
}\right\}  F\left(  \xi\right)  ^{\frac{3}{2}}d\xi\leq\int_{\left\{
F\leq1\right\}  }F\left(  \xi\right)  d\xi+\int_{\left\{  F>1\right\}
}F\left(  \xi\right)  ^{\frac{q_{\varepsilon}}{2}}d\xi\\
&  \lesssim\int_{\left\{  \sum_{T\in\mathbb{T}}\mathbf{1}_{T}\leq1\right\}
}\prod_{k=1}^{3}\left(  \sum_{T_{k}\in\mathbb{T}_{k}\left(  2^{-s}\times
2^{-s}\times1\right)  }\mathbf{1}_{T_{k}}\left(  \xi\right)  \right)
^{\frac{1}{3}}d\xi+\int_{\left\{  \sum_{T\in\mathbb{T}}\mathbf{1}%
_{T}>1\right\}  }\prod_{k=1}^{3}\left(  \sum_{T_{k}\in\mathbb{T}_{k}\left(
2^{-s}\times2^{-s}\times1\right)  }\mathbf{1}_{T_{k}}\left(  \xi\right)
\right)  ^{\frac{q_{\varepsilon}}{6}}d\xi\\
&  \lesssim1+C_{q_{\varepsilon}}^{\frac{q_{\varepsilon}}{3}}2^{\frac
{q_{\varepsilon}}{3}3\varepsilon s}\lesssim1+2^{4\varepsilon s},
\end{align*}
since first,%
\begin{align*}
&  \int_{\left\{  \sum_{T\in\mathbb{T}}\mathbf{1}_{T}\leq1\right\}  }%
\prod_{k=1}^{3}\left(  \sum_{T_{k}\in\mathbb{T}_{k}\left(  2^{-s}\times
2^{-s}\times1\right)  }\mathbf{1}_{T_{k}}\left(  \xi\right)  \right)
^{\frac{1}{3}}d\xi\\
&  \lesssim\prod_{k=1}^{3}\left(  \int_{\mathbb{R}^{3}}\sum_{T_{k}%
\in\mathbb{T}_{k}\left(  2^{-s}\times2^{-s}\times1\right)  }\mathbf{1}_{T_{k}%
}\left(  \xi\right)  d\xi\right)  ^{\frac{1}{3}}\lesssim\prod_{k=1}^{3}\left(
2^{-2s}\#\mathbb{T}_{k}\left(  2^{-s}\times2^{-s}\times1\right)  \right)
^{\frac{1}{3}}\lesssim1,
\end{align*}
because the tubes in $\mathbb{T}_{k}$ are $2^{-s}$-separated, and second since
for $0<\varepsilon<1$, we have $q_{\varepsilon}<3+\varepsilon\leq4$ and,
\[
\int_{\left\{  \sum_{T\in\mathbb{T}}\mathbf{1}_{T}>1\right\}  }\prod_{k=1}%
^{3}\left(  \sum_{T_{k}\in\mathbb{T}_{k}\left(  2^{-s}\times2^{-s}%
\times1\right)  }\mathbf{1}_{T_{k}}\left(  \xi\right)  \right)  ^{\frac
{q_{\varepsilon}}{6}}d\xi\leq\int_{\mathbb{R}^{3}}\prod_{k=1}^{3}%
\mathcal{S}\mathbb{T}_{k}\left(  \xi\right)  ^{\frac{q_{\varepsilon}}{3}}%
d\xi\lesssim\left(  C_{q_{\varepsilon}}2^{3\varepsilon s}\right)
^{\frac{q_{\varepsilon}}{3}}\lesssim2^{4\varepsilon s}.
\]

Conversely, for the `only if' assertion, we use the rapid decay of the Fourier
transform away from the associated tubes, just as in the proof (\ref{rap}) of
the converse assertion of Proposition \ref{Kak equiv}, to obtain the
inequality,%
\begin{align*}
&  \left\Vert \prod_{k=1}^{3}\mathcal{S}_{\operatorname*{Fourier}%
}^{s,\mathbf{u}_{k}}f\right\Vert _{L^{1}\left(  \mathbb{R}^{3}\right)
}\lesssim\left\Vert \left(  \prod_{k=1}^{3}\sum_{m_{k}=0}^{\infty}a_{m_{k}%
}\sum_{I_{k}\in\mathcal{G}_{s}\left[  U_{k}\right]  }\mathbf{1}_{2^{m_{k}}%
\tau_{\mathbf{u}_{k}}\widehat{I}}\right)  ^{\frac{1}{2}}\right\Vert
_{L^{1}\left(  \mathbb{R}^{3}\right)  }\\
&  \lesssim\left\Vert \left(  \sum_{m_{1}=0}^{\infty}\sum_{m_{2}=0}^{\infty
}\sum_{m_{3}=0}^{\infty}a_{m_{1}}a_{m_{2}}a_{m_{3}}\prod_{k=1}^{3}\sum
_{I_{k}\in\mathcal{G}_{s}\left[  U_{k}\right]  }\mathbf{1}_{2^{m_{k}}%
\tau_{\mathbf{u}_{k}}\widehat{I}}\right)  ^{\frac{1}{2}}\right\Vert
_{L^{1}\left(  \mathbb{R}^{3}\right)  }\\
&  \lesssim\left\Vert \sum_{m_{1}=0}^{\infty}\sum_{m_{2}=0}^{\infty}%
\sum_{m_{3}=0}^{\infty}\sqrt{a_{m_{1}}a_{m_{2}}a_{m_{3}}}\prod_{k=1}%
^{3}\left(  \sum_{I_{k}\in\mathcal{G}_{s}\left[  U_{k}\right]  }%
\mathbf{1}_{2^{m_{k}}\tau_{\mathbf{u}_{k}}\widehat{I}}\right)  ^{\frac{1}{2}%
}\right\Vert _{L^{1}\left(  \mathbb{R}^{3}\right)  }\\
&  \lesssim\sum_{m_{1}=0}^{\infty}\sum_{m_{2}=0}^{\infty}\sum_{m_{3}%
=0}^{\infty}\sqrt{a_{m_{1}}a_{m_{2}}a_{m_{3}}}\left\Vert \prod_{k=1}%
^{3}\left(  \sum_{I_{k}\in\mathcal{G}_{s}\left[  U\right]  }\mathbf{1}%
_{2^{m_{k}}\tau_{\mathbf{u}_{k}}\widehat{I}}\right)  ^{\frac{1}{2}}\right\Vert
_{L^{1}\left(  \mathbb{R}^{3}\right)  }\\
&  \lesssim\sum_{m_{1}=0}^{\infty}\sum_{m_{2}=0}^{\infty}\sum_{m_{3}%
=0}^{\infty}\sqrt{a_{m_{1}}a_{m_{2}}a_{m_{3}}}C_{0}^{m_{1}+m_{2}+m_{3}%
}\left\Vert \prod_{k=1}^{3}\left(  \sum_{I_{k}\in\mathcal{G}_{s}\left[
U\right]  }\mathbf{1}_{\tau_{\mathbf{u}_{k}}\widehat{I}}\right)  ^{\frac{1}%
{2}}\right\Vert _{L^{1}\left(  \mathbb{R}^{3}\right)  }\lesssim2^{\varepsilon
s},
\end{align*}
since the coefficients $a_{m}$ are rapidly decreasing, and where again, the
constant $C_{0}$ is related to the geometry of expanded tubes. This shows that
(\ref{single tri Four}) holds for all $\varepsilon,\nu>0$, and completes the
proof of Proposition \ref{Kak tri equiv}.
\end{proof}

Combining Proposition \ref{Kak equiv}, Theorem \ref{SFA} and Proposition
\ref{Kak tri equiv}, yields Theorem \ref{big}.

\begin{remark}
The dual form of the Kakeya maximal operator conjecture, as well as its
triliner analogue, can be reduced to proving the case where all of the
$\delta$-tubes of length $1$ are contained in the cube $\left[  -2,2\right]
^{3}$. Indeed, tile $\mathbb{R}^{3}$ with translates $Q_{\alpha}\equiv
\alpha+\left[  -2,2\right]  ^{3}$ of the cube $\left[  -2,2\right]  ^{3}$ for
$\alpha\in\left(  2\mathbb{Z}\right)  ^{3}$, and set $\mathbb{T}_{\alpha
}\equiv\left\{  T\in\mathbb{T}:T\subset2Q_{\alpha}\right\}  $. Then every
$\delta$-tube $T$ belongs to at least one, and at most nine, of the
collections $\mathbb{T}_{\alpha}$. Suppose that for some $p>0$, and for every
$\alpha\in\left(  2\mathbb{Z}\right)  ^{3}$, we have the inequality (the case
$\alpha$ is equivalent to the case $0$ by translation invariance),%
\[
\int_{\mathbb{R}^{3}}\left(  \sum_{T\in\mathbb{T}_{\alpha}}\mathbf{1}%
_{T}\left(  \xi\right)  \right)  ^{p}d\xi\lesssim\delta^{-\varepsilon}\left(
\delta^{2}\#\mathbb{T}_{\alpha}\right)  .
\]
Then from the bounded overlap of the cubes $\left\{  2Q_{\alpha}\right\}
_{\alpha\in\left(  2\mathbb{Z}\right)  ^{3}}$ we obtain,
\begin{align*}
&  \int_{\mathbb{R}^{3}}\left(  \sum_{T\in\mathbb{T}}\mathbf{1}_{T}\left(
\xi\right)  \right)  ^{p}d\xi=\int_{\mathbb{R}^{3}}\left(  \sum_{\alpha
\in\left(  2\mathbb{Z}\right)  ^{3}}\sum_{T\in\mathbb{T}_{\alpha}}%
\mathbf{1}_{T}\left(  \xi\right)  \right)  ^{p}d\xi\\
&  \approx\int_{\mathbb{R}^{3}}\sum_{\alpha\in\left(  2\mathbb{Z}\right)
^{3}}\left(  \sum_{T\in\mathbb{T}_{\alpha}}\mathbf{1}_{T}\left(  \xi\right)
\right)  ^{p}d\xi\lesssim\sum_{\alpha\in\left(  2\mathbb{Z}\right)  ^{3}%
}\delta^{-\varepsilon}\left(  \delta^{2}\#\mathbb{T}_{\alpha}\right)
\leq9\delta^{-\varepsilon}\left(  \delta^{2}\#\mathbb{T}\right)
\lesssim\delta^{-\varepsilon}.
\end{align*}
A similar result holds for the trilinear version of the Kakeya maximal
operator conjecture upon using%
\[
\prod_{k=1}^{3}\left(  \sum_{T_{k}\in\mathbb{T}_{k}}\mathbf{1}_{T_{k}}\left(
\xi\right)  \right)  ^{\frac{p}{3}}=\prod_{k=1}^{3}\left(  \sum_{\alpha_{k}%
\in\left(  2\mathbb{Z}\right)  ^{3}}\sum_{T_{k}\in\left(  \mathbb{T}%
_{k}\right)  _{\alpha_{k}}}\mathbf{1}_{T_{k}}\left(  \xi\right)  \right)
^{\frac{p}{3}}\approx\sum_{\alpha\in\left(  2\mathbb{Z}\right)  ^{3}}%
\prod_{k=1}^{3}\left(  \sum_{T_{k}\in\left(  \mathbb{T}_{k}\right)  _{\alpha}%
}\mathbf{1}_{T_{k}}\left(  \xi\right)  \right)  ^{\frac{p}{3}},
\]
which again follows from the bounded overlap of the cubes $\left\{
2Q_{\alpha}\right\}  _{\alpha\in\left(  2\mathbb{Z}\right)  ^{3}}$, since if
$\xi\in2Q_{\alpha}$, then we must have $\left\vert \alpha_{k}-\alpha
\right\vert \leq2$ in order that the product above is nonvanishing.
\end{remark}

\section{A detailed proof of the square function theorem}

Here we write out complete details of the proof of Theorem \ref{SFA}, which
broadly follows the argument in \cite[proof of Theorem 3]{RiSa} - in turn
built on the argument in \cite[Section 2]{BoGu} - but is here adapted to the
use of square functions, modulations and single scale inequalities. As
mentioned earlier, we do not need the induction on scales idea used in the
difficult \textbf{Case 3} of \cite[Section 2]{BoGu}.

Suppose $S$ is a compact smooth hypersurface in $\mathbb{R}^{3}$ that is
contained in the paraboloid $\mathbb{P}^{2}$, and denote surface measure on
$S$ by $\sigma$. Define%
\[
\mathcal{S}_{\operatorname*{Fourier}}^{s,\mathbf{u}}f\left(  \xi\right)
\equiv\left(  \sum_{L\in\mathcal{G}_{s}\left[  U\right]  }\left\vert \left[
\mathsf{M}_{\mathbf{u}}^{s}\Phi_{\ast}\bigtriangleup_{L}f\right]  ^{\wedge
}\left(  \xi\right)  \right\vert ^{2}\right)  ^{\frac{1}{2}}.
\]
The next definition is specialized from \cite{BoGu} to accommodate the
\emph{single scale} inequality for $s\in\mathbb{N}$ that is used here, in
place of the \emph{local} inequalities for balls $B_{R}$ that are used in
\cite{BoGu} and \cite{RiSa}.

\begin{definition}
For $1<q<\infty$ and $s\in\mathbb{N}$ define $A_{s}^{\left(  q\right)  }$ to
be the best constant in the single scale linear Fourier extension inequality,
\[
\left(  \int_{\mathbb{R}^{3}}\left\vert \mathcal{S}_{\operatorname*{Fourier}%
}^{s,\mathbf{u}}f\left(  \xi\right)  \right\vert ^{q}d\xi\right)  ^{\frac
{1}{q}}\leq A_{s}^{\left(  q\right)  }\left\Vert f\right\Vert _{L^{\infty
}\left(  U\right)  }\ ,\ \ \ \ \ \text{for all grids }\mathcal{G}\text{,
}\mathbf{u}\in\mathcal{V}\text{ and }f\in L^{\infty}\left(  U\right)  ,
\]
i.e.
\begin{equation}
A_{s}^{\left(  q\right)  }\equiv\sup_{\mathcal{G}\in\operatorname*{Grid}}%
\sup_{\mathbf{u}\in\mathcal{V}}\sup_{\left\Vert f\right\Vert _{L^{\infty
}\left(  U\right)  }\leq1}\left(  \int_{\mathbb{R}^{3}}\left\vert
\mathcal{S}_{\operatorname*{Fourier}}^{s,\mathbf{u}}f\left(  \xi\right)
\right\vert ^{q}d\xi\right)  ^{\frac{1}{q}}. \label{Q_R}%
\end{equation}

\end{definition}

Note that for each fixed $q>3$ and $s\in\mathbb{N}$, the quantity
$A_{s}^{\left(  q\right)  }$ is finite since $\Phi_{\ast}\bigtriangleup_{L}f$
is a smooth measure on the paraboloid and so $\left[  \mathsf{M}_{\mathbf{u}%
}^{s}\Phi_{\ast}\bigtriangleup_{L}f\right]  ^{\wedge}\left(  \xi\right)
=\left[  \Phi_{\ast}\bigtriangleup_{L}f\right]  ^{\wedge}\left(  \xi
-u_{L}\right)  $ is in $L^{q}\left(  \mathbb{R}^{3}\right)  $ for $q>3$
uniformly in $\mathcal{G}$, $\mathbf{u}$ and $f\in L^{\infty}\left(  U\right)
$.

\begin{theorem}
\label{Loc lin}Let $S$ be as above and suppose that for all $q>3$, there is
$0<\nu<1$ such that $\mathcal{A}_{\operatorname*{disj}\nu}%
^{\operatorname*{square}}\left(  \otimes_{3}L^{\infty}\rightarrow L^{\frac
{q}{3}};\varepsilon\right)  $ holds for all $0<\varepsilon<1$. Then for every
$\varepsilon>0$ and $s\in\mathbb{N}$, we have%
\[
A_{s}^{\left(  q\right)  }\leq C_{q}2^{\frac{\varepsilon}{3}s}.
\]

\end{theorem}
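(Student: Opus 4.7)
\bigskip

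\noindent\textbf{Proof proposal.} The plan is to adapt the Bourgain--Guth pigeonholing/rescaling template (\cite[Section~2]{BoGu}), as modified in \cite[proof of Theorem 3]{RiSa}, by replacing the local ball quantity $Q_R^{(q)}$ with the single scale quantity $A_s^{(q)}$, and by replacing bounded functions with the wavelet pseudoprojections $\bigtriangleup_I f$ of Definition~\ref{def Q}. The goal is to establish a self-improving recursion
\[
A_s^{(q)} \leq C_q K^a 2^{\varepsilon s} + C\, K^{-b}\, A_{s-n}^{(q)},
\]
with $K = 2^n$ and $b > 0$, which, together with the a priori finiteness of $A_s^{(q)}$ noted after (\ref{Q_R}), can be iterated to yield the claimed power growth.

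Fix a large integer $n$ and set $K = 2^{n}$. Partition $U$ into $K^{-1} \times K^{-1}$ subsquares $\{U_\alpha\}$ from some intermediate grid, and decompose accordingly $\bigtriangleup_I f = \sum_\alpha \bigtriangleup_I(f\mathbf{1}_{U_\alpha})$. For each $\xi \in \mathbb{R}^3$ one then pigeonholes on the set of ``significant'' $\alpha$'s for which $\mathcal{S}_{\operatorname*{Fourier}}^{s,\mathbf{u}}(f\mathbf{1}_{U_\alpha})(\xi)$ contributes meaningfully to $\mathcal{S}_{\operatorname*{Fourier}}^{s,\mathbf{u}} f(\xi)$, splitting into three cases: \textbf{Case 1}, where at most $O(1)$ values of $\alpha$ are significant; \textbf{Case 2}, where all significant $\alpha$'s lie in a common $O(K^{-1})$-cap of $U$; and \textbf{Case 3}, where one can extract three significant subsquares $U_{\alpha_1},U_{\alpha_2},U_{\alpha_3}$ forming a $\nu$-disjoint triple with $\nu \asymp K^{-1}$.

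In Case 1, only $O(1)$ terms contribute pointwise and the inequality is trivial after the supremum on $f$ in (\ref{Q_R}). In Case 3, the disjoint trilinear square function hypothesis $\mathcal{A}_{\operatorname*{disj}\nu}^{\operatorname*{square}}(\otimes_3 L^\infty \to L^{q/3};\varepsilon)$ applies directly to the contribution from $(U_{\alpha_1},U_{\alpha_2},U_{\alpha_3})$ (after summing the $q/3$-th powers over the $O(K^{O(1)})$ triples), producing the $C_q K^a 2^{\varepsilon s}$ term. This is precisely where the $\nu$-disjoint hypothesis absorbs the difficult Case 3 of \cite{BoGu}, so no nested induction on a smaller scale $K_1 \ll K$ is needed. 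Case 2 is the core of the argument and uses parabolic rescaling as in \cite{TaVaVe}: centered at a subsquare $U_\alpha$ of side $K^{-1}$, the parabolic dilation $\rho = 2^{-n}$ maps $U_\alpha$ to (a translate of) the full square $U$, and the identity (\ref{par res}) shows that a level-$s$ pseudoprojection $\bigtriangleup_I$ on $U_\alpha$ pulls back to a level-$(s-n)$ pseudoprojection $\bigtriangleup_{\frac{1}{\rho} I}$ on $U$, in a grid $\mathcal{G}'$ that is generally \emph{not} the original grid $\mathcal{G}$. Since both the grid $\mathcal{G}$ and the sequence $\mathbf{u}$ are taken as suprema in (\ref{Q_R}), and since the composition $\mathsf{M}_{\mathbf{u}}^{s} \Phi_*$ is preserved under parabolic rescalings centered on the paraboloid, the Case 2 contribution is controlled by $C K^{-b} A_{s-n}^{(q)}$ with a gain $b > 0$ coming from the Jacobian together with $q > 3$.

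Combining the three cases gives the displayed recursion. Choosing $n$ so that $CK^{-b} < 1$ and iterating the recursion down from scale $s$ to scale $O(1)$ produces $A_s^{(q)} \leq C'_q 2^{(\varepsilon + O(\log K / n))s}$. Since $\varepsilon > 0$ is arbitrary and $n$ can be made large independently of $\varepsilon$, one concludes $A_s^{(q)} \leq C_q 2^{\varepsilon s/3}$. The main obstacle is the bookkeeping in Case 2: one must verify that the pseudoprojection structure of $\bigtriangleup_I f$, together with the modulation $\mathsf{M}_{\mathbf{u}}^{s}$ on the paraboloid, is faithfully transported through the parabolic rescaling into a new pseudoprojection/modulation pair at scale $s-n$; this is exactly what the wavelet identity (\ref{par res}) and the grid-supremum in (\ref{Q_R}) are designed to deliver. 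A secondary point is balancing the pigeonhole losses in Case 3 against the rescaling gain in Case 2, where the exponent condition $q > 3$ enters decisively.
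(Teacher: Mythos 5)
Your overall strategy — pigeonhole on significant caps, invoke the disjoint trilinear hypothesis in the spread-out case, and use parabolic rescaling in the concentrated case — is the right idea and matches the paper's, but your trichotomy of cases is not exhaustive, and this is a genuine gap. The paper's cases (Subsections \ref{Sub Case 1}--\ref{Sub Case 3}) are: (i) separated interaction, where there exist three $\approx 2^{-\lambda}$-squares with near-maximal weight that are pairwise $\gtrsim 2^{-\lambda}$-separated, handled by the disjoint trilinear square function estimate; (ii) clustered interaction, where all near-maximal weight lies near a single square $I_\ast^a$, handled by parabolic rescaling; and (iii) dipole interaction, where near-maximal weight concentrates near exactly \emph{two} $\gtrsim 2^{-\lambda}$-separated squares $I_\ast^a, I_{\ast\ast}^a$, and this is handled by \emph{two} parabolic rescalings, one centered at each cluster. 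The dipole case falls through your three cases: there can be many significant $\alpha$'s (not your Case 1), they are not all in one $O(K^{-1})$-cap (not your Case 2), and from two separated clusters at a single scale you cannot produce a $\nu$-disjoint triple satisfying (\ref{nu disjoint'}), because that condition forces all pairwise distances to be comparable to the square sizes, whereas two clusters give one small gap and two large ones. Moreover, the key pointwise estimate $|\mathcal{S}^s f(\xi)|^q \lesssim 2^{O(\lambda)}(\omega_{I_1}\omega_{I_2}\omega_{I_3})^{q/3}$, which is what lets the trilinear hypothesis control the linear square function, requires all three squares to have near-maximal weight; in the dipole case any third square carries small weight, so this bound fails and the trilinear route is closed. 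You must add a separate dipole case.

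Two secondary points. Your Case 1, with the dismissal "the inequality is trivial after the supremum on $f$," is not justified: even a single significant cap of side $K^{-1}$ needs parabolic rescaling in order to be compared to the full-square quantity $A^{(q)}$, and this rescaling is exactly where the paper earns the crucial decay factor $(2^{-\lambda})^{2 - 4/q}$ (requiring $q>3$) that drives the absorption; a bare supremum over $f$ gives nothing. Also, your closing step asserting $A_s^{(q)} \le C'_q 2^{(\varepsilon + O(\log K / n))s}$ is garbled: with $K = 2^n$ one has $\log K/n = \log 2$, a fixed constant, so the exponent does not shrink as $n \to \infty$. The paper instead fixes $\lambda$ once and for all, large enough that both (\ref{q control}) and (\ref{namely}) hold, and closes by a same-scale absorption: the Case 2 and Case 3 contributions are bounded by $\tfrac{1}{2} A_s^{(q)}$, which is absorbed into the left side to give $A_s^{(q)} \lesssim C_{q,\lambda,\nu}\, 2^{\varepsilon s/3}$ directly. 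An iteration to scale $O(1)$ such as you propose can also be made to work, but the correct bookkeeping gives $A_s^{(q)} \lesssim K^a 2^{\varepsilon s}$ with $K^a$ a constant once $n$ is fixed, after which one relabels $\varepsilon$ — not the exponent you wrote.
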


We first introduce a shorter notation for the singular measures on the
paraboloid of the form
\[
\mathsf{M}_{\mathbf{u}}^{s}\Phi_{\ast}\sum_{L\in\mathcal{G}_{s}\left[
I\right]  }\bigtriangleup_{L}f=\mathsf{M}_{\mathbf{u}}^{s}\Phi_{\ast
}\mathsf{Q}_{I}^{s}f,\ \ \ \ \ \text{where }\mathsf{Q}_{I}^{s}\equiv\sum
_{L\in\mathcal{G}_{s}\left[  I\right]  }\bigtriangleup_{L}f,
\]
that are used repeatedly throughout the proof. For $\lambda\in\mathbb{N}$,
$I\in\mathcal{G}_{\lambda}\left[  U\right]  $ and $s\geq\lambda\,$, we set%
\begin{equation}
f_{I}^{\Phi,s}\equiv\Phi_{\ast}\mathsf{Q}_{I}^{s}f=\Phi_{\ast}\left(
\mathsf{Q}_{I}^{s}f\left(  x\right)  dx\right)  \text{ and }\mathsf{Q}_{I}%
^{s}f\left(  x\right)  \equiv\sum_{L\in\mathcal{G}_{s}\left[  I\right]
}\bigtriangleup_{L}f\left(  x\right)  =\sum_{L\in\mathcal{G}_{s}\left[
I\right]  }\left\langle f,\varphi_{L}\right\rangle \varphi_{L}\left(
x\right)  , \label{def delta}%
\end{equation}
and%
\begin{equation}
f_{I,\mathbf{u}}^{\Phi,s}\equiv\mathsf{M}_{\mathbf{u}}^{s}\Phi_{\ast
}\mathsf{Q}_{I}^{s}f=\mathsf{M}_{\mathbf{u}}^{s}f_{I}^{\Phi}%
,\ \ \ \ \ \text{for }I\in\mathcal{G}_{\lambda}\left[  U\right]  \text{ and
}s\geq\lambda, \label{def f I u}%
\end{equation}
where $\mathsf{M}_{\mathbf{u}}^{s}$ is defined in Definition \ref{def M}. We
also define the collection $\mathcal{F}$ of singular measures on the
paraboloid by,
\[
\mathcal{F}\equiv\left\{  f_{I,\mathbf{u}}^{\Phi,s}:I\in\mathcal{G}_{\lambda
}\left[  U\right]  \text{, }\mathcal{G}\in\operatorname*{Grid}\text{, }%
s\in\mathbb{N}\text{, and }\mathbf{u}\in\mathcal{V}\right\}  ,
\]
which is invariant under parabolic rescaling by a dyadic number, see
(\ref{par res}) below and the subsequent discussion.

\subsection{The pigeonholing argument of Bourgain and Guth\label{Sub pigeon}}

\begin{proof}
[Proof of Theorem \ref{Loc lin}]We begin the argument as in \cite{BoGu}, with
notation as in \cite{RiSa}, but with the added complications of having to deal
with modulations $\mathsf{M}_{\mathbf{u}}^{s}$ and Fourier square functions
$\mathcal{S}_{\operatorname*{Fourier}}^{s,\mathbf{u}}$. For $S$ a compact
smooth surface contained in the paraboloid $\mathbb{P}^{2}$ given by
$z_{3}=\left\vert z^{\prime}\right\vert ^{2}=z_{1}^{2}+z_{2}^{2}$ in
$\mathbb{R}^{3}$, and for $f\in L^{\infty}\left(  S\right)  $ with $\left\Vert
f\right\Vert _{L^{\infty}\left(  S\right)  }=1$, and for $\mathcal{G}%
\in\operatorname*{Grid}$, we initially consider the oscillatory integral,%
\begin{align*}
&  Tf\left(  \xi\right)  =\int_{U}e^{i\phi\left(  \xi,y\right)  }\sum
_{I\in\mathcal{G}_{\lambda}\left[  U\right]  }\mathsf{Q}_{I}^{s}f\left(
y\right)  dy=\sum_{I\in\mathcal{G}_{\lambda}\left[  U\right]  }\int
_{U}e^{i\left\{  \xi_{1}\cdot y_{1}+\xi_{2}\cdot y_{2}+\xi_{3}\left(
y_{1}^{2}+y_{2}^{2}\right)  \right\}  }\mathsf{Q}_{I}^{s}f\left(  y\right)
dy\\
&  =\sum_{I\in\mathcal{G}_{\lambda}\left[  U\right]  }\int_{U}e^{i\xi
\cdot\left(  y,\left\vert y\right\vert ^{2}\right)  }\mathsf{Q}_{I}%
^{s}f\left(  y\right)  dy=\sum_{I\in\mathcal{G}_{\lambda}\left[  U\right]
}\widehat{f_{I}^{\Phi,s}}\left(  \xi\right)  =\sum_{I\in\mathcal{G}_{\lambda
}\left[  U\right]  }\left(  \Phi_{\ast}\left[  \mathsf{Q}_{I}^{s}f\left(
y\right)  dy\right]  \right)  ^{\wedge}\left(  \xi\right)
,\ \ \ \ \ \text{for }\xi\in\mathbb{R}^{3},
\end{align*}
where $\bigtriangleup_{I}f$ is defined in (\ref{def delta}), and
\[
\phi\left(  \xi,y\right)  =\xi\cdot\Phi\left(  y\right)  \text{ and }%
\Phi\left(  y\right)  \equiv\left(  y_{1},y_{2},y_{1}^{2}+y_{2}^{2}\right)  .
\]
We write%
\begin{align*}
Tf\left(  \xi\right)   &  =\sum_{I\in\mathcal{G}_{\lambda}\left[  U\right]
}\int_{U}e^{i\xi\cdot\left(  y,\left\vert y\right\vert ^{2}\right)
}\mathsf{Q}_{I}^{s}f\left(  y\right)  dy\\
&  =\sum_{I\in\mathcal{G}_{\lambda}\left[  U\right]  }e^{i\phi\left(
\xi,c_{I}\right)  }\int e^{i\left\{  \phi\left(  \xi,y\right)  -\phi\left(
\xi,c_{I}\right)  \right\}  }\mathsf{Q}_{I}^{s}f\left(  y\right)
dy=\sum_{I\in\mathcal{G}_{\lambda}\left[  S\right]  }e^{i\phi\left(  \xi
,c_{I}\right)  }T_{I}f\left(  \xi\right)  ,
\end{align*}
where%
\begin{align}
T_{I}f\left(  \xi\right)   &  \equiv\int e^{i\left\{  \phi\left(
\xi,y\right)  -\phi\left(  \xi,c_{I}\right)  \right\}  }\mathsf{Q}_{I}%
^{s}f\left(  y\right)  dy=e^{-i\xi\cdot\Phi\left(  c_{I}\right)  }\int
e^{i\xi\cdot\Phi\left(  y\right)  }\mathsf{Q}_{I}^{s}f\left(  y\right)
dy\label{def T_I}\\
&  =e^{-i\xi\cdot\Phi\left(  c_{I}\right)  }\widehat{f_{I}^{\Phi,s}}\left(
\xi\right)  =\left(  \tau_{-\Phi\left(  c_{I}\right)  }f_{I}^{\Phi,s}\right)
^{\wedge}\left(  \xi\right)  ,\nonumber
\end{align}
and where $\tau_{\Phi\left(  c_{I}\right)  }g\left(  z\right)  \equiv g\left(
z-\Phi\left(  c_{I}\right)  \right)  $ is translation of a function $g$ by the
unit vector $\Phi\left(  c_{I}\right)  $.

The \emph{Fourier square function} at level $s$ is defined by
\begin{align}
\mathcal{S}_{\operatorname*{Fourier}}^{s}f\left(  \xi\right)   &
\equiv\left(  \sum_{L\in\mathcal{G}_{s}\left[  U\right]  }\left\vert
\widehat{f_{L}^{\Phi,s}}\left(  \xi\right)  \right\vert ^{2}\right)
^{\frac{1}{2}}=\left(  \sum_{L\in\mathcal{G}_{s}\left[  U\right]  }\left\vert
\widehat{\Phi_{\ast}\bigtriangleup_{L}f}\left(  \xi\right)  \right\vert
^{2}\right)  ^{\frac{1}{2}}\label{w/o}\\
&  =\left(  \sum_{I\in\mathcal{G}_{\lambda}\left[  U\right]  }\sum
_{L\in\mathcal{G}_{s}\left[  I\right]  }\left\vert \widehat{\Phi_{\ast
}\bigtriangleup_{L}f}\left(  \xi\right)  \right\vert ^{2}\right)  ^{\frac
{1}{2}}=\frac{1}{c_{\flat}}\left(  \sum_{I\in\mathcal{G}_{\lambda}\left[
U\right]  }\mathcal{S}_{\operatorname*{Fourier}}^{s}\mathsf{Q}_{I}^{s}f\left(
\xi\right)  ^{2}\right)  ^{\frac{1}{2}},\nonumber
\end{align}
for $s\geq\lambda$ since
\begin{align*}
&  \mathcal{S}_{\operatorname*{Fourier}}^{s}\mathsf{Q}_{I}^{s}f\left(
\xi\right)  =\left(  \sum_{L\in\mathcal{G}_{s}\left[  U\right]  }\left\vert
\left(  \Phi_{\ast}\bigtriangleup_{L}\mathsf{Q}_{I}^{s}f\right)  ^{\wedge
}\left(  \xi\right)  \right\vert ^{2}\right)  ^{\frac{1}{2}}\\
&  =\left(  \sum_{L\in\mathcal{G}_{s}\left[  U\right]  }\left\vert \sum
_{M\in\mathcal{G}_{s}\left[  I\right]  }\left(  \Phi_{\ast}\bigtriangleup
_{L}\bigtriangleup_{M}f\right)  ^{\wedge}\left(  \xi\right)  \right\vert
^{2}\right)  ^{\frac{1}{2}}=c_{\flat}\left(  \sum_{L\in\mathcal{G}_{s}\left[
I\right]  }\left\vert \widehat{\Phi_{\ast}\bigtriangleup_{L}f}\left(
\xi\right)  \right\vert ^{2}\right)  ^{\frac{1}{2}},
\end{align*}
where $c_{\flat}$ is defined in (\ref{pseudo}).

\medskip

More generally, in our situation where modulations $\mathsf{M}_{\mathbf{u}%
}^{s}$ multiply the pushforwards $\Phi_{\ast}\bigtriangleup_{L}f$, we first
note that the following intertwining formula holds,
\begin{align}
\mathsf{M}_{\mathbf{u}}^{s}\Phi_{\ast}\mathsf{Q}^{s}f\left(  z\right)   &
=\Phi_{\ast}\mathsf{Q}_{\mathbf{u}}^{s}f\left(  z\right)  ,\label{comm}\\
\text{where }\mathsf{Q}_{\mathbf{u}}^{s}f\left(  y\right)   &  \equiv
\sum_{L\in\mathcal{G}_{s}\left[  U\right]  }e^{iu_{L}\cdot\Phi\left(
y\right)  }\bigtriangleup_{L}f\left(  y\right)  .\nonumber
\end{align}
Indeed, for any continuous function $g\in C\left(  \mathbb{R}^{3}\right)  $,
we have by definition that%
\begin{align*}
&  \left\langle \mathsf{M}_{\mathbf{u}}^{s}\Phi_{\ast}\mathsf{Q}%
^{s}f,g\right\rangle =\int_{\mathbb{R}^{3}}g\left(  z\right)  d\left(
\mathsf{M}_{\mathbf{u}}^{s}\Phi_{\ast}\mathsf{Q}^{s}f\right)  \left(
z\right)  =\int_{\mathbb{R}^{3}}g\left(  z\right)  \mathsf{M}_{\mathbf{u}}%
^{s}\left(  z\right)  d\left(  \Phi_{\ast}\mathsf{Q}^{s}f\right)  \left(
z\right) \\
&  =\int_{\mathbb{R}^{3}}g\left(  z\right)  \sum_{L\in\mathcal{G}_{s}\left[
U\right]  }\mathbf{1}_{L_{s}}\left(  \Phi^{-1}z\right)  e^{iu_{L}\cdot
z}d\left(  \Phi_{\ast}\mathsf{Q}^{s}f\right)  \left(  z\right) \\
&  =\int_{U}\left\{  g\left(  \Phi\left(  y\right)  \right)  \sum
_{L\in\mathcal{G}_{s}\left[  U\right]  }\mathbf{1}_{L_{s}}\left(  y\right)
e^{iu_{L}\cdot\Phi\left(  y\right)  }\right\}  \mathsf{Q}^{s}f\left(
y\right)  dy=\int_{U}g\left(  \Phi\left(  y\right)  \right)  \left\{
\sum_{L\in\mathcal{G}_{s}\left[  U\right]  }e^{iu_{L}\cdot\Phi\left(
y\right)  }\bigtriangleup_{L}f\left(  y\right)  \right\}  dy
\end{align*}
where $L_{s}\equiv\left\{  y\in\mathbb{R}^{2}:\operatorname*{dist}\left(
y,L\right)  <2^{-s}\right\}  $, and%
\[
\left\langle \Phi_{\ast}\mathsf{Q}_{\mathbf{u}}^{s}f\left(  z\right)
,g\right\rangle =\int_{\mathbb{R}^{3}}g\left(  z\right)  d\left(  \Phi_{\ast
}\mathsf{Q}_{\mathbf{u}}^{s}f\right)  \left(  z\right)  =\int_{U}g\left(
\Phi\left(  y\right)  \right)  \left\{  \mathsf{Q}_{\mathbf{u}}^{s}f\left(
y\right)  \right\}  dy,
\]
which proves (\ref{comm}).

Now we continue by defining a localization of $\mathsf{Q}_{\mathbf{u}}^{s}$ to
a square $I$,
\[
\mathsf{Q}_{I,\mathbf{u}}^{s}f\left(  y\right)  \equiv\sum_{L\in
\mathcal{G}_{s}\left[  I\right]  }e^{iu_{L}\cdot\Phi\left(  y\right)
}\bigtriangleup_{L}f\left(  y\right)  =\mathsf{Q}_{\mathbf{u}}^{s}%
\mathbf{1}_{I_{s}}f\left(  y\right)  ,\ \ \ \ \ \text{for }I\in\mathcal{G}%
_{\lambda}\left[  U\right]  \text{ and }\lambda\leq s,
\]
where $I_{s}\equiv\left\{  y\in\mathbb{R}^{2}:\operatorname*{dist}\left(
y,I\right)  <2^{-s}\right\}  $, so that $\mathsf{Q}_{\mathbf{u}}^{s}f\left(
y\right)  =\sum_{I\in\mathcal{G}_{\lambda}\left[  U\right]  }\mathsf{Q}%
_{I,\mathbf{u}}^{s}f\left(  y\right)  $. Define for each $\mathbf{u}=\left\{
u_{L}\right\}  _{L\in\mathcal{G}_{s}\left[  U\right]  }\in\mathcal{V}$,%
\begin{align*}
T_{\mathbf{u}}f\left(  \xi\right)   &  =\sum_{I\in\mathcal{G}_{\lambda}\left[
U\right]  }\int_{U}e^{i\xi\cdot\left(  y,\left\vert y\right\vert ^{2}\right)
}\mathsf{Q}_{I,\mathbf{u}}^{s}f\left(  y\right)  dy\\
&  =\sum_{I\in\mathcal{G}_{\lambda}\left[  U\right]  }e^{i\phi\left(
\xi,c_{I}\right)  }\int e^{i\left\{  \phi\left(  \xi,y\right)  -\phi\left(
\xi,c_{I}\right)  \right\}  }\mathsf{Q}_{I,\mathbf{u}}^{s}f\left(  y\right)
dy=\sum_{I\in\mathcal{G}_{\lambda}\left[  S\right]  }e^{i\phi\left(  \xi
,c_{I}\right)  }T_{I,\mathbf{u}}f\left(  \xi\right)  ,
\end{align*}
where%
\begin{align*}
T_{I,\mathbf{u}}f\left(  \xi\right)   &  \equiv\int e^{i\left\{  \phi\left(
\xi,y\right)  -\phi\left(  \xi,c_{I}\right)  \right\}  }\mathsf{Q}%
_{I,\mathbf{u}}^{s}f\left(  y\right)  dy=e^{-i\xi\cdot\Phi\left(
c_{I}\right)  }\int e^{i\xi\cdot\Phi\left(  y\right)  }\mathsf{Q}%
_{I,\mathbf{u}}^{s}f\left(  y\right)  dy\\
&  =e^{-i\xi\cdot\Phi\left(  c_{I}\right)  }\widehat{\Phi_{\ast}%
\mathsf{Q}_{I,\mathbf{u}}^{s}f}\left(  \xi\right)  =e^{-i\xi\cdot\Phi\left(
c_{I}\right)  }\widehat{f_{I,\mathbf{u}}^{\Phi,s}}\left(  \xi\right)
=\widehat{\tau_{-\Phi\left(  c_{I}\right)  }f_{I,\mathbf{u}}^{\Phi,s}}\left(
\xi\right)  ,
\end{align*}
since%
\begin{align*}
\Phi_{\ast}\mathsf{Q}_{I,\mathbf{u}}^{s}f\left(  z\right)   &  =\Phi_{\ast
}\left(  \sum_{L\in\mathcal{G}_{s}\left[  I\right]  }e^{iu_{L}\cdot\Phi\left(
y\right)  }\bigtriangleup_{L}f\left(  y\right)  dy\right)  \left(  z\right)
=\sum_{L\in\mathcal{G}_{s}\left[  I\right]  }e^{iu_{L}\cdot z}\left(
\Phi_{\ast}\bigtriangleup_{L}f\right)  \left(  z\right) \\
&  =\sum_{L\in\mathcal{G}_{s}\left[  I\right]  }\mathsf{M}_{\mathbf{u}}%
^{s}\left(  z\right)  \left(  \Phi_{\ast}\bigtriangleup_{L}f\right)  \left(
z\right)  =f_{I,\mathbf{u}}^{\Phi,s}\left(  z\right)  .
\end{align*}

The corresponding Fourier square function in analogy with (\ref{w/o}) is now,%
\begin{align*}
\mathcal{S}_{\operatorname*{Fourier}}^{s,\mathbf{u}}f\left(  \xi\right)   &
=\left(  \sum_{L\in\mathcal{G}_{s}\left[  U\right]  }\left\vert \widehat
{f_{L,\mathbf{u}}^{\Phi,s}}\left(  \xi\right)  \right\vert ^{2}\right)
^{\frac{1}{2}}\equiv\left(  \sum_{L\in\mathcal{G}_{s}\left[  U\right]
}\left\vert \left(  \Phi_{\ast}\mathsf{M}_{\mathbf{u}}^{s}\bigtriangleup
_{L}f\right)  ^{\wedge}\left(  \xi\right)  \right\vert ^{2}\right)  ^{\frac
{1}{2}}\\
&  =\left(  \sum_{I\in\mathcal{G}_{\lambda}\left[  U\right]  }\sum
_{L\in\mathcal{G}_{s}\left[  I\right]  }\left\vert \left(  \mathsf{M}%
_{\mathbf{u}}^{s}\Phi_{\ast}\bigtriangleup_{L}f\right)  ^{\wedge}\left(
\xi\right)  \right\vert ^{2}\right)  ^{\frac{1}{2}}=\left(  \sum
_{I\in\mathcal{G}_{\lambda}\left[  U\right]  }\mathcal{S}%
_{\operatorname*{Fourier}}^{s}\mathbf{1}_{I_{s}}f\left(  \xi\right)
^{2}\right)  ^{\frac{1}{2}}.
\end{align*}

Note that%
\[
\left\vert \nabla_{\xi}\left\{  \left(  \xi-u_{I}\right)  \cdot\left(
y-c_{I},\left\vert y\right\vert ^{2}-\left\vert c_{I}\right\vert ^{2}\right)
\right\}  \right\vert =\left\vert \left(  y-c_{I},\left\vert y\right\vert
^{2}-\left\vert c_{I}\right\vert ^{2}\right)  \right\vert \lesssim\frac
{1}{2^{\lambda}},\ \ \ \ \ \text{for }y\in I,
\]
implies%
\begin{align}
\nabla_{\xi}T_{I,\mathbf{u}}f\left(  \xi\right)   &  =\nabla_{\xi}\int
e^{i\xi\cdot\left(  y-c_{I},\left\vert y-c_{I}\right\vert ^{2}\right)
}\mathsf{Q}_{I,\mathbf{u}}^{s}f\left(  y\right)  dy=\int\nabla_{\xi}%
e^{i\xi\cdot\left(  y-c_{I},\left\vert y-c_{I}\right\vert ^{2}\right)
}\mathsf{Q}_{I,\mathbf{u}}^{s}f\left(  y\right)  dy\label{pre note that}\\
&  =\int ie^{i\xi\cdot\left(  y-c_{I},\left\vert y-c_{I}\right\vert
^{2}\right)  }\nabla_{\xi}\left\{  \xi\cdot\left(  y-c_{I},\left\vert
y\right\vert ^{2}-\left\vert c_{I}\right\vert ^{2}\right)  \right\}
\mathsf{Q}_{I,\mathbf{u}}^{s}f\left(  y\right)  dy,\nonumber
\end{align}
which implies%
\begin{equation}
\left\vert \nabla_{\xi}T_{I,\mathbf{u}}f\left(  \xi\right)  \right\vert
\leq\int\left\vert \nabla_{\xi}\left\{  \xi\cdot\left(  y-c_{I},\left\vert
y\right\vert ^{2}-\left\vert c_{I}\right\vert ^{2}\right)  \right\}
\right\vert \left\vert \mathsf{Q}_{I,\mathbf{u}}^{s}f\left(  y\right)
\right\vert dy\lesssim\frac{1}{2^{\lambda}}\left\Vert \mathsf{Q}%
_{I,\mathbf{u}}^{s}f\right\Vert _{L^{1}\left(  U\right)  }\lesssim\frac
{1}{2^{3\lambda}}\left\Vert \mathsf{Q}_{I,\mathbf{u}}^{s}f\right\Vert
_{L^{\infty}\left(  U\right)  }, \label{note that}%
\end{equation}
since $\ell\left(  I\right)  =\frac{1}{2^{\lambda}}$. We will use the
estimates (\ref{pre note that}) and (\ref{note that}) in (\ref{exclaim'}) below.

Now let $\rho$ be a smooth rapidly decreasing bump function such that
$\widehat{\rho}\left(  \xi\right)  =1$ for $\left\vert \xi\right\vert \leq1$,
and set%
\[
\rho_{\lambda}\left(  z\right)  \equiv\frac{1}{2^{3\lambda}}\rho\left(
\frac{z}{2^{\lambda}}\right)  ,\ \ \ \ \ \widehat{\rho_{\lambda}}\left(
\xi\right)  =\widehat{\rho}\left(  2^{\lambda}\xi\right)  =1\text{ on
}B\left(  0,2^{-\lambda}\right)  \text{ and }\rho_{\lambda}\left(  z\right)
\approx\frac{1}{2^{3\lambda}}\text{ on }B\left(  0,2^{\lambda}\right)  .
\]
Then from (\ref{def T_I}) we obtain%
\[
T_{I,\mathbf{u}}f\left(  \xi\right)  =T_{I,\mathbf{u}}f\ast\rho_{\lambda
}\left(  \xi\right)  \ ,\ \ \ \ \ \text{for }I\in\mathcal{G}_{\lambda}\left[
S\right]  \text{ and }\xi\in\mathbb{R}^{3},
\]
since $\tau_{-\Phi\left(  c_{I}\right)  }f_{I,\mathbf{u}}^{\Phi}\subset
B\left(  0,2^{-\lambda}\right)  $ and $\widehat{T_{I,\mathbf{u}}f}\left(
z\right)  =\tau_{-\Phi\left(  c_{I}\right)  }f_{I,\mathbf{u}}^{\Phi}\left(
z\right)  $ imply%
\[
\widehat{T_{I,\mathbf{u}}f\ast\rho_{\lambda}}\left(  z\right)  =\widehat
{T_{I,\mathbf{u}}f}\left(  z\right)  \widehat{\rho_{\lambda}}\left(  z\right)
=\tau_{-\Phi\left(  c_{I}\right)  }f_{I,\mathbf{u}}^{\Phi}\left(  z\right)
\widehat{\rho_{\lambda}}\left(  z\right)  =\tau_{-\Phi\left(  c_{I}\right)
}f_{I,\mathbf{u}}^{\Phi}\left(  z\right)  =\widehat{T_{I,\mathbf{u}}f}\left(
z\right)  .
\]

Now fix a point $a\in\Gamma_{\lambda}$, where%
\[
\Gamma_{\lambda}\equiv2^{\lambda}\mathbb{Z}^{3},
\]
and restrict $\xi$ to the ball $B\left(  a,2^{\lambda}\right)  $. Then for
$\xi\in B\left(  a,2^{\lambda}\right)  $ and $I\in\mathcal{G}_{\lambda}\left[
S\right]  $ we have
\begin{align}
&  \left\vert T_{I,\mathbf{u}}f\left(  \xi\right)  \right\vert =\left\vert
T_{I,\mathbf{u}}f\ast\rho_{\lambda}\left(  \xi\right)  \right\vert =\left\vert
\int_{\mathbb{R}^{3}}T_{I,\mathbf{u}}f\left(  z\right)  \rho_{\lambda}\left(
\xi-z\right)  dz\right\vert \label{exclaim}\\
&  \leq\int_{\mathbb{R}^{3}}\left\vert T_{I,\mathbf{u}}f\left(  z\right)
\right\vert \left\vert \rho_{\lambda}\left(  \xi-z\right)  \right\vert
dz\leq\int_{\mathbb{R}^{3}}\left\vert T_{I,\mathbf{u}}f\left(  z\right)
\right\vert \sup_{\omega\in B\left(  a,2^{\lambda}\right)  }\left\vert
\rho_{\lambda}\left(  z-\omega\right)  \right\vert dz=\int_{\mathbb{R}^{3}%
}\left\vert T_{I,\mathbf{u}}f\left(  z\right)  \right\vert \zeta_{\lambda
}\left(  z-a\right)  dz,\nonumber
\end{align}
where $\zeta_{\lambda}\left(  w\right)  \equiv\sup_{\omega-a\in B\left(
0,2^{\lambda}\right)  }\left\vert \rho_{\lambda}\left(  \omega\right)
\right\vert $, since $\rho$ can be chosen radial and,
\begin{align*}
\sup_{\omega\in B\left(  a,2^{\lambda}\right)  }\left\vert \rho_{\lambda
}\left(  \omega-z\right)  \right\vert  &  =\frac{1}{2^{3\lambda}}\sup
_{\omega\in B\left(  a,2^{\lambda}\right)  }\left\vert \rho\left(
\frac{\left(  \omega-a\right)  -\left(  z-a\right)  }{2^{\lambda}}\right)
\right\vert =\frac{1}{2^{3\lambda}}\sup_{\gamma\in B\left(  0,1\right)
}\left\vert \rho\left(  \frac{z-a}{2^{\lambda}}-\gamma\right)  \right\vert
=\zeta_{\lambda}\left(  z-a\right)  ,\\
\text{where }\zeta\left(  w\right)   &  \equiv\sup_{\left\vert w-w^{\prime
}\right\vert \leq1}\left\vert \rho\left(  w^{\prime}\right)  \right\vert .
\end{align*}

Moreover, for $L\in\mathcal{G}_{s}\left[  I\right]  $ and $\xi\in B\left(
a,2^{\lambda}\right)  $, we also have%
\[
\left\vert T_{L,\mathbf{u}}f\left(  \xi\right)  \right\vert \leq
\int_{\mathbb{R}^{3}}\left\vert T_{L,\mathbf{u}}f\left(  z\right)  \right\vert
\zeta_{\lambda}\left(  z-a\right)  dz,
\]
and the corresponding square function estimate%
\begin{equation}
\mathcal{S}_{\operatorname*{Fourier}}^{s}T_{I,\mathbf{u}}f\left(  \xi\right)
=\left(  \sum_{L\in\mathcal{G}_{s}\left[  I\right]  }\left\vert
T_{L,\mathbf{u}}f\left(  \xi\right)  \right\vert ^{2}\right)  ^{\frac{1}{2}%
}\leq\left(  \sum_{L\in\mathcal{G}_{s}\left[  I\right]  }\left(
\int_{\mathbb{R}^{3}}\left\vert T_{L,\mathbf{u}}f\left(  z\right)  \right\vert
\zeta_{\lambda}\left(  z-a\right)  dz\right)  ^{2}\right)  ^{\frac{1}{2}}.
\label{exclaim square}%
\end{equation}

Note that
\begin{equation}
\mathcal{S}_{\operatorname*{Fourier}}^{s}\mathsf{Q}_{I_{0}^{a}}^{s}f\left(
\xi\right)  =c\mathcal{S}_{\operatorname*{Fourier}}^{s}T_{I_{0}^{a}}f\left(
\xi\right)  , \label{Q and T}%
\end{equation}
where $T_{L}f\left(  \xi\right)  =\widehat{\Phi_{\ast}\left[  \bigtriangleup
_{L}f\left(  y\right)  dy\right]  }\left(  \xi\right)  $ by (\ref{pseudo}),
which has already been implicitly used in (\ref{w/o}). We note in passing that%
\begin{align*}
T_{I,\mathbf{u}}f\left(  \xi\right)   &  =e^{-i\xi\cdot\Phi\left(
c_{I}\right)  }\int e^{i\xi\cdot\Phi\left(  y\right)  }\mathsf{Q}%
_{I,\mathbf{u}}^{s}f\left(  y\right)  dy=e^{-i\xi\cdot\Phi\left(
c_{I}\right)  }\int e^{i\xi\cdot\Phi\left(  y\right)  }\sum_{L\in
\mathcal{G}_{s}\left[  I\right]  }\bigtriangleup_{L,\mathbf{u}}f\left(
y\right)  dy\\
&  =\sum_{L\in\mathcal{G}_{s}\left[  I\right]  }e^{-i\xi\cdot\left[
\Phi\left(  c_{I}\right)  -\Phi\left(  c_{L}\right)  \right]  }\left\{  \int
e^{i\xi\cdot\Phi\left(  y\right)  }e^{-i\xi\cdot\Phi\left(  c_{L}\right)
}\bigtriangleup_{L,\mathbf{u}}f\left(  y\right)  dy\right\} \\
&  =\sum_{L\in\mathcal{G}_{s}\left[  I\right]  }e^{-i\xi\cdot\left[
\Phi\left(  c_{I}\right)  -\Phi\left(  c_{L}\right)  \right]  }T_{L,\mathbf{u}%
}f\left(  \xi\right)  .
\end{align*}

For $L\in\mathcal{G}_{s}\left[  U\right]  $, and with this same $\lambda$
fixed for the moment, we define%
\begin{align*}
&  w_{L,\mathbf{u}}^{a}\left(  f\right)  \equiv\int_{\mathbb{R}^{3}}\left\vert
T_{L,\mathbf{u}}f\left(  z\right)  \right\vert \zeta_{\lambda}\left(
z-a\right)  dz=\int_{\mathbb{R}^{3}}\left\vert T_{L,\mathbf{u}}f\left(
z\right)  \right\vert \zeta\left(  \frac{z-a}{2^{\lambda}}\right)  \frac
{dz}{2^{3\lambda}}\\
&  =\int_{\mathbb{R}^{3}}\left\vert \widehat{f_{L,\mathbf{u}}^{\Phi}}\left(
z\right)  \right\vert \zeta\left(  \frac{z-a}{2^{\lambda}}\right)  \frac
{dz}{2^{3\lambda}}\approx\frac{1}{\left\vert B\left(  a,2^{\lambda}\right)
\right\vert }\int_{B\left(  a,2^{\lambda}\right)  }\left\vert \widehat
{f_{L,\mathbf{u}}^{\Phi}}\left(  z\right)  \right\vert \ ,
\end{align*}
and refer to $w_{L,\mathbf{u}}^{a}\left(  f\right)  $ as the `weight' of
$\widehat{f_{L,\mathbf{u}}^{\Phi}}$ relative to the ball $B\left(
a,2^{\lambda}\right)  $, which is comparable to the average of $\left\vert
\widehat{f_{L,\mathbf{u}}^{\Phi}}\left(  z\right)  \right\vert $ over the ball
$B\left(  a,2^{\lambda}\right)  $.

In analogy with this, for $I\in\mathcal{G}_{\lambda}\left[  S\right]  $,
define the right hand side of (\ref{exclaim square}) to be
\begin{align*}
\omega_{I,\mathbf{u}}^{a,s}\left(  f\right)   &  \equiv\left(  \sum
_{L\in\mathcal{G}_{s}\left[  I\right]  }\left(  \int_{\mathbb{R}^{3}%
}\left\vert T_{L,\mathbf{u}}f\left(  z\right)  \right\vert \zeta_{\lambda
}\left(  z-a\right)  dz\right)  ^{2}\right)  ^{\frac{1}{2}}=\left(  \sum
_{L\in\mathcal{G}_{s}\left[  I\right]  }w_{L,\mathbf{u}}^{a}\left(  f\right)
^{2}\right)  ^{\frac{1}{2}}\\
&  \approx\left(  \sum_{L\in\mathcal{G}_{s}\left[  I\right]  }\left(  \frac
{1}{\left\vert B\left(  a,2^{\lambda}\right)  \right\vert }\int_{B\left(
a,2^{\lambda}\right)  }\left\vert \widehat{f_{L,\mathbf{u}}^{\Phi}}\left(
z\right)  \right\vert \right)  ^{2}\right)  ^{\frac{1}{2}}\ ,
\end{align*}
and refer to $\omega_{I,\mathbf{u}}^{a,s}\left(  f\right)  $ as the `square
function weight' of $\widehat{f_{I,\mathbf{u}}^{\Phi}}$ relative to the ball
$B\left(  a,2^{\lambda}\right)  $ at scale $s$, which is the $\ell^{2}$ norm
of the averages $w_{L,\mathbf{u}}^{a}\left(  f\right)  $. Finally, we have
\[
\omega_{I,\mathbf{u}}^{a,s}\left(  f\right)  \lesssim\left\Vert \widehat
{f_{I,\mathbf{u}}^{\Phi}}\right\Vert _{L^{\infty}}\lesssim\left\Vert
Q_{I,\mathbf{u}}^{s}f\right\Vert _{L^{1}}\lesssim\left\vert I\right\vert
\left\Vert f\right\Vert _{L^{\infty}}=2^{-2\lambda}.
\]

Summarizing, we have%
\begin{equation}
\left\vert \mathcal{S}_{\operatorname*{Fourier}}^{s}T_{I,\mathbf{u}}f\left(
\xi\right)  \right\vert \leq\int_{\mathbb{R}^{3}}\left\vert \mathcal{S}%
_{\operatorname*{Fourier}}^{s}T_{I,\mathbf{u}}f\left(  z\right)  \right\vert
\zeta_{\lambda}\left(  z-a\right)  dz=\omega_{I,\mathbf{u}}^{a,s}\left(
f\right)  \ ,\ \ \ \ \ \text{for }\xi\in B\left(  a,2^{\lambda}\right)  .
\label{pre exclaim'}%
\end{equation}
and
\begin{equation}
\int_{\mathbb{R}^{3}}\left\vert \mathcal{S}_{\operatorname*{Fourier}}%
^{s}T_{I,\mathbf{u}}f\left(  z\right)  \right\vert \zeta_{\lambda}\left(
z-\xi\right)  dz\approx\omega_{I,\mathbf{u}}^{a,s}\left(  f\right)  \text{
},\ \ \ \ \ \text{for }\xi\in B\left(  a,2^{\lambda}\right)  ,
\label{exclaim'}%
\end{equation}
since for $\xi\in B\left(  a,2^{\lambda}\right)  $ we have $\zeta_{\lambda
}\left(  \xi-z\right)  \approx\zeta_{\lambda}\left(  a-z\right)
=\zeta_{\lambda}\left(  z-a\right)  $ by (\ref{pre note that}) and
(\ref{note that}).

Now set%
\[
\omega_{\ast,\mathbf{u}}^{a,s}\left(  f\right)  \equiv\max_{I\in
\mathcal{G}_{\lambda}\left[  S\right]  }\omega_{I,\mathbf{u}}^{a,s}\left(
f\right)  =\max_{I\in\mathcal{G}_{\lambda}\left[  S\right]  }\int
_{\mathbb{R}^{3}}\left\vert \mathcal{S}_{\operatorname*{Fourier}}%
^{s}T_{I,\mathbf{u}}f\left(  z\right)  \right\vert \zeta_{\lambda}\left(
z-a\right)  dz,
\]
and fix $I_{\ast}^{a}$ such that
\[
\omega_{I_{\ast}^{a},\mathbf{u}}^{a,s}\left(  f\right)  =\omega_{\ast
,\mathbf{u}}^{a,s}\left(  f\right)  .
\]
We note in passing that $\omega_{\ast,\mathbf{u}}^{a,s}\left(  f\right)
\rightarrow0$ as $\left\vert a\right\vert \rightarrow\infty$ by the
Riemann-Lebesgue Lemma.

For $\lambda>1$ chosen sufficiently large, we will now estimate the
contributions to the norm $\left\Vert \mathcal{S}_{\operatorname*{Fourier}%
}^{s,\mathbf{u}}f\right\Vert _{L^{q}\left(  \mathbb{R}^{3}\right)
}=\left\Vert \mathcal{S}_{\operatorname*{Fourier}}^{s}T_{\mathbf{u}%
}f\right\Vert _{L^{q}\left(  \mathbb{R}^{3}\right)  }$ in three exhaustive
cases in turn. The first case will yield the growth factor $2^{\frac
{\varepsilon}{3}s}$, while the next two cases will be absorbed.
\end{proof}

\begin{notation}
At this point in the proof, in order to reduce clutter of notation, we will
drop dependence of the Fourier square function weights $\omega_{I,\mathbf{u}%
}^{a,s}\left(  f\right)  $, functions $f_{I,\mathbf{u}}^{\Phi}$ and operators
$T_{I,\mathbf{u}}$, on the sequence $\mathbf{u}=\left\{  u_{I}\right\}
_{I\in\mathcal{G}_{\lambda}\left[  S\right]  }$ of translation vectors
$u_{I}\in\mathbb{R}^{3}$. This is reasonable since the vectors $u_{I}$ play
only the role of a parameter, due in large part to the intertwining identity
(\ref{comm}), which allows us to simply replace the pseudoprojections
$\mathsf{Q}_{I}^{s}f$ with the modulated pseudoprojections $\mathsf{Q}%
_{I,\mathbf{u}}^{s}f$ throughout the arguments. In addition, we further drop
the dependence of the weights $\omega_{I}^{a,s}\left(  f\right)  $ on $f$, and
simply write $\omega_{I}^{a,s}$. Of course the vectors $u_{I}$ and the
function $f$ will change under the parabolic rescalings in \textbf{Case}
\textbf{2} and \textbf{Case} \textbf{3} below, but this is harmless in light
of the definition of $A_{s}^{\left(  q\right)  }$, and the parabolic
invariance of the measures $\mathsf{M}_{\mathbf{u}}^{s}\Phi_{\ast}%
\mathsf{Q}_{I}^{s}f=\Phi_{\ast}\mathsf{Q}_{I,\mathbf{u}}^{s}f\in\mathcal{F}$.
The reader should keep this in mind in the following arguments.
\end{notation}

\subsubsection{Case 1: Separated interaction\label{Sub Case 1}}

\begin{proof}
[Proof continued]In \textbf{Case 1} we assume the following property holds for
the point $a$. There exists a triple of squares $I_{0}^{a},J_{0}^{a},K_{0}%
^{a}\in\mathcal{G}_{\lambda}\left[  U\right]  $ depending on $a$ such that%
\begin{align*}
\omega_{I_{0}^{a}}^{a,s},\omega_{J_{0}^{a}}^{a,s},\omega_{K_{0}^{a}}^{a,s}  &
>2^{-2\lambda}\omega_{\ast}^{a,s}\ ,\ \ \ \ \ \text{and }\left\vert
\mathbf{c}_{I_{0}^{a}}-\mathbf{c}_{J_{0}^{a}}\right\vert ,\left\vert
\mathbf{c}_{J_{0}^{a}}-\mathbf{c}_{K_{0}^{a}}\right\vert ,\left\vert
\mathbf{c}_{K_{0}^{a}}-\mathbf{c}_{I_{0}^{a}}\right\vert >2^{10}2^{-\lambda
}\ ,\\
&  \fbox{$%
\begin{array}
[c]{ccccc}%
\mathbf{c}_{I_{0}^{a,s}} &  &  &  & \\
& \cdot &  &  & \\
&  & \mathbf{c}_{K_{0}^{a,s}} & \leftrightarrows & \mathbf{c}_{K_{0}^{a,s}}\\
&  &  & \cdot & \\
&  &  &  & \mathbf{c}_{J_{0}^{a,s}}%
\end{array}
$},
\end{align*}
i.e. there exists a `$2^{10}2^{-\lambda}$-separated' triple $I_{0}^{a,s}%
,J_{0}^{a,s},K_{0}^{a,s}$ of squares of side length $2^{-\lambda} $, such that
each of $I_{0}^{a,s}$, $J_{0}^{a,s}\ $and $K_{0}^{a,s}$ have near maximal
weight. In \textbf{Case 1} we will use the $\nu$-disjoint trilinear square
function estimate $\mathcal{A}_{\operatorname*{disj}\nu}%
^{\operatorname*{square}}\left(  \otimes_{3}L^{\infty}\rightarrow L^{\frac
{q}{3}};\varepsilon\right)  $ with $\nu=2^{10}2^{-\lambda}$. For $\xi\in
B\left(  a,2^{\lambda}\right)  $ we throw away the modulation $e^{-i\Phi
\left(  c_{I}\right)  \cdot\xi}$, and using (\ref{pre exclaim'}), we estimate
that for $\xi\in B\left(  a,2^{\lambda}\right)  $,%

\begin{align}
& \label{T weight}\\
&  \left\vert \mathcal{S}_{\operatorname*{Fourier}}^{s}f\left(  \xi\right)
\right\vert =\left(  \sum_{L\in\mathcal{G}_{s}\left[  S\right]  }\left\vert
T_{L}f\left(  \xi\right)  \right\vert ^{2}\right)  ^{\frac{1}{2}}=\left(
\sum_{I\in\mathcal{G}_{\lambda}\left[  S\right]  }\sum_{L\in\mathcal{G}%
_{s}\left[  I\right]  }\left\vert T_{L}f\left(  \xi\right)  \right\vert
^{2}\right)  ^{\frac{1}{2}}\lesssim\left(  \sum_{I\in\mathcal{G}_{\lambda
}\left[  S\right]  }\left(  \sum_{L\in\mathcal{G}_{s}\left[  I\right]
}\left(  w_{L}^{a}\right)  ^{2}\right)  \right)  ^{\frac{1}{2}}\nonumber\\
&  =\left(  \sum_{I\in\mathcal{G}_{\lambda}\left[  S\right]  }\left(
\omega_{I}^{a,s}\right)  ^{2}\right)  ^{\frac{1}{2}}\leq\left(  \sum
_{I\in\mathcal{G}_{\lambda}\left[  S\right]  }\left(  \omega_{\ast}%
^{a,s}\right)  ^{2}\right)  ^{\frac{1}{2}}<2^{\lambda}\omega_{\ast}%
^{a,s}<2^{3\lambda}\left(  \omega_{I_{0}^{a}}^{a,s}\omega_{J_{0}^{a}}%
^{a,s}\omega_{K_{0}^{a}}^{a,s}\right)  ^{\frac{1}{3}},\nonumber
\end{align}
since the\emph{\ fixed} triple $\left(  I_{0}^{a},J_{0}^{a},K_{0}^{a}\right)
$ satisfies the near maximal weight condition in \textbf{Case 1}:%
\[
\omega_{\ast}^{a,s}<\min\left\{  2^{2\lambda}\omega_{I_{0}^{a}}^{a,s}%
,2^{2\lambda}\omega_{J_{0}^{a}}^{a,s},2^{2\lambda}\omega_{K_{0}^{a}}%
^{a,s}\right\}  \leq2^{2\lambda}\left(  \omega_{I_{0}^{a}}^{a,s}\right)
^{\frac{1}{3}}\left(  \omega_{J_{0}^{a}}^{a,s}\right)  ^{\frac{1}{3}}\left(
\omega_{K_{0}^{a}}^{a,s}\right)  ^{\frac{1}{3}}.
\]

We will now use the shorthand notation $\mathcal{S}_{\operatorname*{Fourier}%
}^{s}=\mathcal{S}^{s}$ in large displays. Thus for $q>3$ and $\xi\in B\left(
a,2^{\lambda}\right)  $, we have from (\ref{T weight}) and (\ref{exclaim'}),
that%
\begin{align}
& \label{S inequ}\\
&  \left\vert \mathcal{S}_{\operatorname*{Fourier}}^{s}f\left(  \xi\right)
\right\vert ^{q}\leq2^{q3\lambda}\left(  \omega_{I_{0}^{a}}^{a,s}\omega
_{J_{0}^{a}}^{a,s}\omega_{K_{0}^{a}}^{a,s}\right)  ^{\frac{q}{3}}\nonumber\\
&  \approx2^{q3\lambda}\left(  \int_{\mathbb{R}^{3}}\left\vert \mathcal{S}%
^{s}T_{I_{0}^{a}}f\left(  z_{1}\right)  \right\vert \zeta_{\lambda}\left(
z_{1}-a\right)  dz_{1}\right)  ^{\frac{q}{3}}\left(  \int_{\mathbb{R}^{3}%
}\left\vert \mathcal{S}^{s}T_{J_{0}^{a}}f\left(  z_{2}\right)  \right\vert
\zeta_{\lambda}\left(  z_{2}-a\right)  dz_{2}\right)  ^{\frac{q}{3}%
}\nonumber\\
&
\ \ \ \ \ \ \ \ \ \ \ \ \ \ \ \ \ \ \ \ \ \ \ \ \ \ \ \ \ \ \ \ \ \ \ \ \ \ \ \ \ \times
\left(  \int_{\mathbb{R}^{3}}\left\vert \mathcal{S}^{s}T_{K_{0}^{a}}f\left(
z_{3}\right)  \right\vert \zeta_{\lambda}\left(  z_{3}-a\right)
dz_{3}\right)  ^{\frac{q}{3}}\nonumber\\
&  \approx2^{q3\lambda}\left(  \int_{\mathbb{R}^{3}}\int_{\mathbb{R}^{3}}%
\int_{\mathbb{R}^{3}}\left\vert \mathcal{S}^{s}T_{I_{0}^{a}}f\left(  \xi
-z_{1}\right)  \mathcal{S}^{s}T_{J_{0}^{a}}f\left(  \xi-z_{2}\right)
\mathcal{S}^{s}T_{K_{0}^{a}}f\left(  \xi-z_{3}\right)  \right\vert \zeta
_{s}\left(  z_{1}\right)  \zeta_{s}\left(  z_{2}\right)  \zeta_{s}\left(
z_{3}\right)  dz_{1}dz_{2}dz_{3}\right)  ^{\frac{q}{3}}\ .\nonumber
\end{align}

Recall that given $q>3$ we assume there is $\nu>0$ such that $\mathcal{A}%
_{\operatorname*{disj}\nu}^{\operatorname*{square}}\left(  \otimes
_{3}L^{\infty}\rightarrow L^{\frac{q}{3}};\varepsilon\right)  $ holds for all
$0<\varepsilon<1$. So we now fix $q>3$, and let $\nu=\nu_{q}$ be the
corresponding disjoint parameter. We will now take $\lambda>1$ sufficiently
large that
\begin{equation}
2^{10}2^{-\lambda}\leq\nu=\nu_{q}, \label{q control}%
\end{equation}
and use the $\nu$-disjoint trilinear square function bound
(\ref{single tri Four}), with
\[
f_{1}=f_{I_{0}^{a}}^{\Phi},\ \ \ f_{2}=f_{J_{0}^{a}}^{\Phi},\ \ \ \ f_{3}%
=f_{K_{0}^{a}}^{\Phi}.
\]
We also recall that
\[
\mathcal{S}_{\operatorname*{Fourier}}^{s}\mathsf{Q}_{I}^{s}f\left(
\xi\right)  \approx\omega_{I}^{a,s}\leq\omega_{\ast}^{a,s}\leq2^{2\lambda
}\omega_{I_{0}^{a}}^{a,s}\approx2^{2\lambda}\mathcal{S}%
_{\operatorname*{Fourier}}^{s}\mathsf{Q}_{I_{0}^{a}}^{s}f\left(  \xi\right)
,\ \ \ \ \ \text{for }\xi\in B\left(  a,2^{\lambda}\right)  ,
\]
and so also%
\[
\mathcal{S}_{\operatorname*{Fourier}}^{s}T_{I}f\left(  \xi\right)
\lesssim2^{2\lambda}\mathcal{S}_{\operatorname*{Fourier}}^{s}T_{I_{0}^{a}%
}f\left(  \xi\right)  ,\ \ \ \ \ \text{for }\xi\in B\left(  a,2^{\lambda
}\right)  .
\]
Thus we obtain from (\ref{single tri Four}) that%
\begin{align*}
&  \int_{\mathbb{R}^{3}}\left\vert \mathcal{S}_{\operatorname*{Fourier}}%
^{s}\mathsf{Q}_{I_{0}^{a}}^{s}f\left(  \xi\right)  \mathcal{S}%
_{\operatorname*{Fourier}}^{s}\mathsf{Q}_{J_{0}^{a}}^{s}f\left(  \xi\right)
\mathcal{S}_{\operatorname*{Fourier}}^{s}\mathsf{Q}_{K_{0}^{a}}^{s}f\left(
\xi\right)  \right\vert ^{\frac{q}{3}}d\xi\\
&  \leq C_{\nu}2^{\varepsilon s\frac{q}{3}}\left\Vert \mathsf{Q}_{I_{0}^{a}%
}^{s}f\right\Vert _{L^{\infty}}\left\Vert \mathsf{Q}_{J_{0}^{a}}%
^{s}f\right\Vert _{L^{\infty}}\left\Vert \mathsf{Q}_{K_{0}^{a}}^{s}%
f\right\Vert _{L^{\infty}}=C_{\nu}2^{\varepsilon s\frac{q}{3}},
\end{align*}
since the triple $I_{0}^{a},J_{0}^{a},K_{0}^{a}$ is $2^{10}2^{-\lambda}%
$-separated, and since $\left\vert \mathsf{Q}_{I}^{s}f\right\vert \leq1$.
Similarly if $\left\vert z_{k}\right\vert \lesssim2^{\lambda}$, we also have%
\begin{equation}
\int_{\mathbb{R}^{3}}\left\vert \mathcal{S}_{\operatorname*{Fourier}}%
^{s}\mathsf{Q}_{I_{0}^{a}}^{s}f\left(  \xi-z_{1}\right)  \mathcal{S}%
_{\operatorname*{Fourier}}^{s}\mathsf{Q}_{J_{0}^{a}}^{s}f\left(  \xi
-z_{2}\right)  \mathcal{S}_{\operatorname*{Fourier}}^{s}\mathsf{Q}_{K_{0}^{a}%
}^{s}f\left(  \xi-z_{3}\right)  \right\vert ^{\frac{q}{3}}d\xi\leq C_{\nu
}2^{\varepsilon s\frac{q}{3}}, \label{with z's}%
\end{equation}
since%
\begin{align*}
\mathcal{S}_{\operatorname*{Fourier}}^{s}\mathsf{Q}_{I_{0}^{a}}^{s}f\left(
\xi-z_{1}\right)   &  \approx\mathcal{S}_{\operatorname*{Fourier}}^{s}%
T_{I_{0}^{a}}f\left(  \xi-z_{1}\right)  =\left(  \sum_{L\in\mathcal{G}%
_{s}\left[  I_{0}^{a}\right]  }\left\vert \widehat{\Phi_{\ast}\bigtriangleup
_{L}f}\left(  \xi-z_{1}\right)  \right\vert ^{2}\right)  ^{\frac{1}{2}}\\
&  =\left(  \sum_{L\in\mathcal{G}_{s}\left[  I_{0}^{a}\right]  }\left\vert
\left[  e^{iz_{1}\cdot z}\left(  \Phi_{\ast}\bigtriangleup_{L}f\right)
\left(  z\right)  \right]  ^{\wedge}\left(  \xi\right)  \right\vert
^{2}\right)  ^{\frac{1}{2}}%
\end{align*}
and the exponential factor $e^{iz_{1}\cdot z}$ can be incorporated into the
modulation $\mathsf{M}_{\mathbf{u}}^{s}$.

Now consider those points $a\in\Gamma_{\lambda}$ for which \textbf{Case 1} is
in effect for the ball $B\left(  a,2^{\lambda}\right)  $ and denote by
$\Gamma_{\lambda}\left(  \text{\textbf{Case 1}}\right)  $ the union of all the
balls $B\left(  a,2^{\lambda}\right)  $ for which $a$ is in \textbf{Case 1}.

Summing over points $a\in\Gamma_{\lambda}$ such that \textbf{Case 1} is in
effect for the ball $B\left(  a,2^{\lambda}\right)  $, and using
(\ref{S inequ}), shows that the contribution $\left\Vert \mathcal{S}%
_{\operatorname*{Fourier}}^{s}f\right\Vert _{L^{q}\left(  \mathbf{1}%
_{\Gamma_{\lambda}\left(  \text{\textbf{Case 1}}\right)  }\mathbb{R}%
^{3}\right)  }$ to the norm $\left\Vert \mathcal{S}_{\operatorname*{Fourier}%
}^{s}f\right\Vert _{L^{q}\left(  \mathbb{R}^{3}\right)  }$ in \textbf{Case 1} satisfies,%

\begin{align*}
&  \ \ \ \ \ \ \ \ \ \ \ \ \ \ \ \left\Vert \mathcal{S}%
_{\operatorname*{Fourier}}^{s}f\right\Vert _{L^{q}\left(  \mathbf{1}%
_{\Gamma_{\lambda}\left(  \text{\textbf{Case 1}}\right)  }\mathbb{R}%
^{3}\right)  }^{q}\lesssim\sum_{a\in\Gamma_{\lambda}\left(  \text{\textbf{Case
1}}\right)  }\int_{B\left(  a,2^{\lambda}\right)  }\left\vert \mathcal{S}%
_{\operatorname*{Fourier}}^{s}f\left(  \xi\right)  \right\vert ^{q}d\xi\\
&  \lesssim\sum_{a\in\Gamma_{\lambda}\left(  \text{\textbf{Case 1}}\right)
}\int_{B\left(  a,2^{\lambda}\right)  }2^{q3\lambda}\int_{\mathbb{R}^{3}}%
\int_{\mathbb{R}^{3}}\int_{\mathbb{R}^{3}}\left\vert \mathcal{S}^{s}%
T_{I_{0}^{a}}f\left(  \xi-z_{1}\right)  \mathcal{S}^{s}T_{J_{0}^{a}}f\left(
\xi-z_{2}\right)  \mathcal{S}^{s}T_{K_{0}^{a}}f\left(  \xi-z_{3}\right)
\right\vert ^{\frac{q}{3}}\\
&  \ \ \ \ \ \ \ \ \ \ \ \ \ \ \ \ \ \ \ \ \times\left(  \zeta_{\lambda
}\left(  z_{1}\right)  \zeta_{\lambda}\left(  z_{2}\right)  \zeta_{\lambda
}\left(  z_{3}\right)  \right)  ^{\frac{q}{3}}dz_{1}dz_{2}dz_{3}d\xi\\
&  =2^{q3\lambda}\int_{\mathbb{R}^{9}}\left\{  \sum_{a\in\Gamma_{\lambda}}%
\int_{B\left(  a,2^{\lambda}\right)  }\left\vert \mathcal{S}^{s}T_{I_{0}^{a}%
}f\left(  \xi-z_{1}\right)  \mathcal{S}^{s}T_{J_{0}^{a}}f\left(  \xi
-z_{2}\right)  \mathcal{S}^{s}T_{K_{0}^{a}}f\left(  \xi-z_{3}\right)
\right\vert ^{\frac{q}{3}}d\xi\right. \\
&  \ \ \ \ \ \ \ \ \ \ \ \ \ \ \ \ \ \ \ \ \left.  \times\left(
\zeta_{\lambda}\left(  z_{1}\right)  \zeta_{\lambda}\left(  z_{2}\right)
\zeta_{\lambda}\left(  z_{3}\right)  \right)  ^{\frac{q}{3}}\right\}
\ dz_{1}dz_{2}dz_{3}\\
&  \leq2^{q3\lambda}\int_{\mathbb{R}^{9}}\left\{  \sum_{a\in\Gamma_{\lambda}%
}\int_{B\left(  a,2^{\lambda}\right)  }\sum_{I,J,K\in\mathcal{G}_{\lambda
}\left[  U\right]  }\left\vert \mathcal{S}^{s}T_{I}f\left(  \xi-z_{1}\right)
\mathcal{S}^{s}T_{J}f\left(  \xi-z_{2}\right)  \mathcal{S}^{s}T_{K}f\left(
\xi-z_{3}\right)  \right\vert ^{\frac{q}{3}}d\xi\right. \\
&  \ \ \ \ \ \ \ \ \ \ \ \ \ \ \ \ \ \ \ \ \left.  \times\left(
\zeta_{\lambda}\left(  z_{1}\right)  \zeta_{\lambda}\left(  z_{2}\right)
\zeta_{\lambda}\left(  z_{3}\right)  \right)  ^{\frac{q}{3}}\right\}
\ dz_{1}dz_{2}dz_{3}%
\end{align*}
which equals%
\begin{align*}
&  2^{q3\lambda}\int_{\mathbb{R}^{9}}\left\{  \sum_{I,J,K\in\mathcal{G}%
_{\lambda}\left[  U\right]  }\int_{\mathbb{R}^{3}}\sum_{a\in\Gamma_{\lambda}%
}\mathbf{1}_{B\left(  a,2^{\lambda}\right)  }\left\vert \mathcal{S}^{s}%
T_{I}f\left(  \xi-z_{1}\right)  \mathcal{S}^{s}T_{J}f\left(  \xi-z_{2}\right)
\mathcal{S}^{s}T_{K}f\left(  \xi-z_{3}\right)  \right\vert ^{\frac{q}{3}}%
d\xi\right. \\
&  \ \ \ \ \ \ \ \ \ \ \ \ \ \ \ \ \ \ \ \ \left.  \times\left(
\zeta_{\lambda}\left(  z_{1}\right)  \zeta_{\lambda}\left(  z_{2}\right)
\zeta_{\lambda}\left(  z_{3}\right)  \right)  ^{\frac{q}{3}}\right\}
\ dz_{1}dz_{2}dz_{3}\\
&  \lesssim2^{q3\lambda}\int_{\mathbb{R}^{9}}\left(  \int_{\mathbb{R}^{3}}%
\sum_{I,J,K\in\mathcal{G}_{\lambda}\left[  U\right]  }\left\vert
\mathcal{S}^{s}T_{I}f\left(  \xi-z_{1}\right)  \mathcal{S}^{s}T_{J}f\left(
\xi-z_{2}\right)  \mathcal{S}^{s}T_{K}f\left(  \xi-z_{3}\right)  \right\vert
^{\frac{q}{3}}d\xi\right) \\
&  \ \ \ \ \ \ \ \ \ \ \ \ \ \ \ \ \ \ \ \ \times\left(  \zeta_{\lambda
}\left(  z_{1}\right)  \zeta_{\lambda}\left(  z_{2}\right)  \zeta_{\lambda
}\left(  z_{3}\right)  \right)  ^{\frac{q}{3}}\ dz_{1}dz_{2}dz_{3}\\
&  \lesssim2^{q3\lambda}C_{\nu}2^{6\lambda}2^{\varepsilon s\frac{q}{3}}%
\int_{\mathbb{R}^{9}}\left(  \zeta_{\lambda}\left(  z_{1}\right)
\zeta_{\lambda}\left(  z_{2}\right)  \zeta_{\lambda}\left(  z_{3}\right)
\right)  ^{\frac{q}{3}}\ dz_{1}dz_{2}dz_{3}\\
&  \approx2^{q3\lambda}C_{\nu}2^{6\lambda}2^{\varepsilon s\frac{q}{3}}\frac
{1}{2^{3q\lambda}}2^{9\lambda}=C_{\nu}2^{15\lambda}2^{\varepsilon s\frac{q}%
{3}},
\end{align*}
since $\zeta_{\lambda}\left(  z\right)  \approx\frac{1}{2^{3\lambda}}$ on
$B\left(  0,2^{\lambda}\right)  $, and where in line 5 we have used
(\ref{with z's}). In other words we have $\left\Vert \mathbf{1}_{\Gamma
_{\lambda}\left(  \text{\textbf{Case 1}}\right)  }\mathcal{S}^{s}Tf\right\Vert
_{L^{q}\left(  \mathbb{R}^{3}\right)  }\lesssim C_{\nu}2^{\left(  3+\frac
{15}{q}\right)  \lambda}2^{\frac{\varepsilon}{3}s}$, where $\mathbf{1}%
_{\Gamma_{\lambda}\left(  \text{\textbf{Case 1}}\right)  }$ is the indicator
function of the union of those balls $B\left(  a,2^{\lambda}\right)  $ for
which \textbf{Case 1} holds.
\end{proof}

\subsubsection{Case 2: Clustered interaction\label{Sub Case 2}}

\begin{proof}
[Proof continued]In \textbf{Case 2} we assume the following property. If
$\left\vert c_{I}-c_{I_{\ast}^{a}}\right\vert >2^{10}2^{-\lambda}$, then
$\omega_{I}^{a,s}\leq2^{-2\lambda}w_{\ast}^{a,s}$. In other words, if $I$ is
sufficiently far from $I_{\ast}^{a,s}$, then $w_{I}^{a,s}$ is much smaller
than $w_{\ast}^{a,s}$, i.e.%
\begin{align*}
\operatorname*{dist}\left(  I,I_{\ast}^{a,s}\right)   &  >2^{10}2^{-\lambda
}\Longrightarrow\int_{\mathbb{R}^{3}}\left\vert \mathcal{S}%
_{\operatorname*{Fourier}}^{s}T_{I}f\left(  z\right)  \right\vert
\zeta_{\lambda}\left(  z-a\right)  dz\leq2^{-2\lambda}\int_{\mathbb{R}^{3}%
}\left\vert \mathcal{S}_{\operatorname*{Fourier}}^{s}T_{I_{\ast}^{a}}f\left(
z\right)  \right\vert \zeta_{\lambda}\left(  z-a\right)  dz,\\
&  \fbox{$%
\begin{array}
[c]{ccccc}%
\mathbf{c}_{I} &  &  &  & \\
& \ast &  &  & \\
&  & \ast &  & \\
&  &  & \ast & \\
&  &  &  & \mathbf{c}_{I_{\ast}^{a,s}}%
\end{array}
$}.
\end{align*}

Here we will use parabolic rescaling as in \cite{TaVaVe}. Recall that
$\phi\left(  \xi,y\right)  =\xi\cdot\Phi\left(  y\right)  $. Let
$\lambda^{\prime}\equiv\frac{1}{2}\lambda-10$, and let $\xi\in B\left(
a,2^{\lambda}\right)  $ for some $a\in\Gamma_{\lambda}$. Using that
$\left\vert c_{I}-c_{I_{\ast}^{a,s}}\right\vert >2^{10}2^{-\lambda}$ implies
$\omega_{I}^{a,s}\leq2^{-2\lambda}\omega_{\ast}^{a,s}$ in this case, we have
with $\left\vert x\right\vert _{\infty}\equiv\max\left\{  \left\vert
x_{1}\right\vert ,\left\vert x_{2}\right\vert \right\}  $ that
\begin{align*}
\left\vert \mathcal{S}_{\operatorname*{Fourier}}^{s}f\left(  \xi\right)
\right\vert  &  =\left(  \sum_{I\in\mathcal{G}_{\lambda}\left[  U\right]
}\sum_{L\in\mathcal{G}_{s}\left[  I\right]  }\left\vert \int_{U}%
e^{i\phi\left(  \xi,y\right)  }\bigtriangleup_{L}f\left(  y\right)
dy\right\vert ^{2}\right)  ^{\frac{1}{2}}=\left(  \sum_{I\in\mathcal{G}%
_{\lambda}\left[  U\right]  }\mathcal{S}_{\operatorname*{Fourier}}%
^{s}\mathsf{Q}_{I}^{s}f\left(  \xi\right)  ^{2}\right)  ^{\frac{1}{2}}\\
&  \leq\left(  \sum_{I\in\mathcal{G}_{\lambda}\left[  U\right]  :\ \left\vert
c_{I}-c_{I_{\ast}^{a}}\right\vert _{\infty}\leq2^{-\lambda^{\prime}}}%
\sum_{L\in\mathcal{G}_{s}\left[  I\right]  }\left\vert \int_{U}e^{i\phi\left(
\xi,y\right)  }\bigtriangleup_{L}f\left(  y\right)  dy\right\vert ^{2}\right)
^{\frac{1}{2}}+\left(  \sum_{I\in\mathcal{G}_{\lambda}\left[  U\right]
:\ \left\vert c_{I}-c_{I_{\ast}^{a}}\right\vert _{\infty}>2^{-\lambda^{\prime
}}}\mathcal{S}_{\operatorname*{Fourier}}^{s}\mathsf{Q}_{I}^{s}f\left(
\xi\right)  ^{2}\right)  ^{\frac{1}{2}}\\
&  \leq\left(  \sum_{L\in\mathcal{G}_{s}\left[  \left\{  x:\left\vert
x-c_{I_{\ast}^{a}}\right\vert _{\infty}\leq2^{-\lambda^{\prime}}\right\}
\right]  }\left\vert \int_{U}e^{i\phi\left(  \xi,y\right)  }\bigtriangleup
_{L}f\left(  y\right)  dy\right\vert ^{2}\right)  ^{\frac{1}{2}}+\left(
\sum_{I\in\mathcal{G}_{\lambda}\left[  U\right]  :\ \left\vert c_{I}%
-c_{I_{\ast}^{a}}\right\vert _{\infty}>2^{-\lambda^{\prime}}}\left(
\omega_{I}^{a.s}\right)  ^{2}\right)  ^{\frac{1}{2}},
\end{align*}
which is at most%
\begin{align*}
&  10\max_{K\in\mathcal{G}_{\lambda^{\prime}}\left[  U\right]  }\left(
\sum_{L\in\mathcal{G}_{s}\left[  K\right]  }\left\vert \int_{U}e^{i\phi\left(
\xi,y\right)  }\bigtriangleup_{L}f\left(  y\right)  dy\right\vert ^{2}\right)
^{\frac{1}{2}}+\left(  \sum_{I\in\mathcal{G}_{\lambda}\left[  U\right]
:\ \left\vert c_{I}-c_{I_{\ast}^{a}}\right\vert _{\infty}>2^{-\lambda^{\prime
}}}\left(  \omega_{I}^{a.s}\right)  ^{2}\right)  ^{\frac{1}{2}}\\
&  \leq10\max_{K\in\mathcal{G}_{\lambda^{\prime}}\left[  U\right]
}\mathcal{S}^{s}T_{K}f\left(  \xi\right)  +\left(  \#\mathcal{G}_{\lambda
}\left[  U\right]  \right)  ^{\frac{1}{2}}2^{-2\lambda}\omega_{\ast}^{a,s}%
\leq10\max_{K\in\mathcal{G}_{\frac{1}{2}\lambda-10}\left[  U\right]
}\mathcal{S}^{s}T_{K}f\left(  \xi\right)  +2^{-\lambda}\omega_{I_{\ast}^{a}%
}^{a,s}\ .
\end{align*}

Now%
\begin{align*}
\omega_{I_{\ast}^{a}}^{a,s}  &  =\int_{\mathbb{R}^{3}}\left\vert
\mathcal{S}_{\operatorname*{Fourier}}^{s}T_{I_{\ast}^{a}}f\left(  \xi\right)
\right\vert \zeta_{\lambda}^{a}\left(  \xi\right)  d\xi\leq\left(
\int_{\mathbb{R}^{3}}\left\vert \mathcal{S}_{\operatorname*{Fourier}}%
^{s}T_{I_{\ast}^{a}}f\left(  \xi\right)  \right\vert ^{q}\zeta_{\lambda}%
^{a}\left(  \xi\right)  d\xi\right)  ^{\frac{1}{q}}\left(  \int_{\mathbb{R}%
^{3}}\zeta_{\lambda}^{a}\left(  \xi\right)  d\xi\right)  ^{\frac{1}{q^{\prime
}}}\\
&  \leq\left(  \int_{\mathbb{R}^{3}}\left\vert \mathcal{S}%
_{\operatorname*{Fourier}}^{s}T_{I_{\ast}^{a}}f\left(  \xi\right)  \right\vert
^{q}\zeta_{\lambda}\left(  \xi-a\right)  d\xi\right)  ^{\frac{1}{q}}=\left(
\int_{\mathbb{R}^{3}}\left\vert \mathcal{S}_{\operatorname*{Fourier}}%
^{s}T_{I_{\ast}^{a}}f\left(  z\right)  \right\vert ^{q}\zeta_{\lambda}\left(
z-a\right)  dz\right)  ^{\frac{1}{q}},
\end{align*}
and so for $\xi\in B\left(  a,2^{\lambda}\right)  $,
\begin{align*}
\left\vert \mathcal{S}_{\operatorname*{Fourier}}^{s}f\left(  \xi\right)
\right\vert ^{q}  &  \leq C\sum_{K\in\mathcal{G}_{\lambda^{\prime}}\left[
U\right]  }\left\vert \mathcal{S}_{\operatorname*{Fourier}}^{s}T_{K}f\left(
\xi\right)  \right\vert ^{q}+C2^{-\lambda q}\int_{\mathbb{R}^{3}}\left\vert
\mathcal{S}_{\operatorname*{Fourier}}^{s}T_{I_{\ast}^{a}}f\left(  z\right)
\right\vert ^{q}\zeta_{\lambda}\left(  z-a\right)  dz\\
&  \leq C\sum_{K\in\mathcal{G}_{\lambda^{\prime}}\left[  U\right]  }\left\vert
\mathcal{S}_{\operatorname*{Fourier}}^{s}T_{K}f\left(  \xi\right)  \right\vert
^{q}+C2^{-\lambda q}\sum_{I\in\mathcal{G}_{\lambda}\left[  U\right]  }%
\int\left\vert \mathcal{S}_{\operatorname*{Fourier}}^{s}T_{I}f\left(
z\right)  \right\vert ^{q}\zeta_{\lambda}\left(  z-a\right)  dz,
\end{align*}
where we have added in all $K\in\mathcal{G}_{\lambda^{\prime}}\left[
U\right]  $ rather than just $K_{\ast}^{a}$, and all $I\in\mathcal{G}%
_{\lambda}\left[  U\right]  $ rather than just $I_{\ast}^{a}$.

Summing over $a\in\Gamma_{\lambda}$, we see that the corresponding
contribution in \textbf{Case 2} is at most%
\begin{align}
& \label{contrib Case 2}\\
\left\Vert \mathbf{1}_{\Gamma_{\lambda}\left(  \text{\textbf{Case 2}}\right)
}\mathcal{S}_{\operatorname*{Fourier}}^{s}f\right\Vert _{L^{q}\left(
\mathbb{R}^{3}\right)  }^{q}  &  \equiv\sum_{a\in\Gamma_{\lambda}}\left\{
C\sum_{K\in\mathcal{G}_{\lambda^{\prime}}\left[  U\right]  }\int_{B\left(
a,2^{s}\right)  }\left\vert \mathcal{S}^{s}T_{K}f\left(  \xi\right)
\right\vert ^{q}d\xi+C2^{-\lambda q}\sum_{I\in\mathcal{G}_{\lambda}\left[
U\right]  }\int_{\mathbb{R}^{3}}\left\vert \mathcal{S}^{s}T_{I}f\left(
z\right)  \right\vert ^{q}\zeta_{s}^{a}\left(  z\right)  dz\right\}
\nonumber\\
&  \lesssim C\sum_{K\in\mathcal{G}_{\lambda^{\prime}}\left[  U\right]  }%
\int_{\mathbb{R}^{3}}\left\vert T_{K}f\left(  \xi\right)  \right\vert ^{q}%
d\xi+C2^{-3\lambda}2^{\lambda q}\sum_{I\in\mathcal{G}_{\lambda}\left[
U\right]  }\int_{\mathbb{R}^{3}}\left\vert \mathcal{S}^{s}T_{I}f\left(
\xi\right)  \right\vert ^{q}d\xi,\nonumber
\end{align}
since $\sum_{a\in\Gamma_{\lambda}}\zeta_{\lambda}\left(  z-a\right)
\lesssim2^{-3\lambda}+\operatorname*{rapid}\operatorname*{decay}$.

At this point we follow \cite{BoGu} in using parabolic rescaling as introduced
in Tao, Vargas and Vega \cite{TaVaVe} on the integral
\[
\operatorname*{Int}_{\rho}\left(  \xi\right)  \equiv\int_{\left\vert
y-\overline{y}\right\vert <\rho}e^{i\phi\left(  \xi,y\right)  }f\left(
y\right)  dy=\int_{\left\vert y-\overline{y}\right\vert <\rho}e^{i\left[
\xi_{1}y_{1}+\xi_{2}y_{2}+\xi_{2}\left(  y_{1}^{2}+y_{2}^{2}\right)  \right]
}f\left(  y\right)  dy,\ \ \ \ \ \text{for }0<\rho<1,
\]
where we are temporarily setting $u_{I}=1$ for convenience, to obtain%
\begin{align*}
&  \left\vert \operatorname*{Int}_{\rho}\left(  \xi\right)  \right\vert
\overset{y=\overline{y}+y^{\prime}}{=}\left\vert \int_{\left\vert y^{\prime
}\right\vert <\rho}e^{i\left[  \xi_{1}\left(  \overline{y}_{1}+y_{1}^{\prime
}\right)  +\xi_{2}\left(  \overline{y}_{2}+y_{2}^{\prime}\right)  +\xi
_{3}\left(  \left(  \overline{y}_{1}+y_{1}^{\prime}\right)  ^{2}+\left(
\overline{y}_{2}+y_{2}^{\prime}\right)  ^{2}\right)  \right]  }f\left(
\overline{y}+y^{\prime}\right)  dy^{\prime}\right\vert \\
&  =\left\vert \int_{\left\vert y^{\prime}\right\vert <\rho}e^{i\left[
\left(  \xi_{1}+2\overline{y_{1}}\xi_{3}\right)  y_{1}^{\prime}+\left(
\xi_{2}+2\overline{y_{2}}\xi_{3}\right)  y_{2}^{\prime}+\xi_{3}\left\vert
y^{\prime}\right\vert ^{2}\right]  }f\left(  \overline{y}+y^{\prime}\right)
dy^{\prime}\right\vert ,
\end{align*}
and conclude that%
\begin{align*}
&  \ \ \ \ \ \ \ \ \ \ \ \ \ \ \ \ \ \ \ \ \ \ \ \ \ \ \ \ \ \ \left\Vert
\operatorname*{Int}_{\rho}\right\Vert _{L^{q}\left(  \mathbb{R}^{3}\right)
}=\left(  \int_{\mathbb{R}^{3}}\left\vert \operatorname*{Int}_{\rho}\left(
\xi\right)  \right\vert ^{q}d\xi\right)  ^{\frac{1}{q}}\\
&  =\left(  \int_{\mathbb{R}^{3}}\left\vert \int_{\left\vert y^{\prime
}\right\vert <\rho}e^{i\left[  \left(  \xi_{1}+2\overline{y_{1}}\xi
_{3}\right)  y_{1}^{\prime}+\left(  \xi_{2}+2\overline{y_{2}}\xi_{3}\right)
y_{2}^{\prime}+\xi_{3}\left\vert y^{\prime}\right\vert ^{2}\right]  }f\left(
\overline{y}+y^{\prime}\right)  dy^{\prime}\right\vert ^{q}d\xi\right)
^{\frac{1}{q}}\\
&  =\left(  \int_{\mathbb{R}^{3}}\left\vert \int_{\left\vert y^{\prime
}\right\vert <\rho}e^{i\left[  \left(  \rho\xi_{1}+2\overline{\frac{y_{1}%
}{\rho}}\rho^{2}\xi_{3}\right)  \frac{y_{1}^{\prime}}{\rho}+\left(  \rho
\xi_{2}+2\overline{\frac{y_{2}}{\rho}}\rho^{2}\xi_{3}\right)  y_{2}^{\prime
}+\rho^{2}\xi_{3}\left\vert \frac{y^{\prime}}{\rho}\right\vert ^{2}\right]
}f\left(  \overline{y}+y^{\prime}\right)  \rho^{2}d\left(  \frac{y^{\prime}%
}{\rho}\right)  \right\vert ^{q}\frac{d\left(  \rho\xi^{\prime}\right)
d\left(  \rho^{2}\xi_{3}\right)  }{\rho^{4}}\right)  ^{\frac{1}{q}}\\
&  =\rho^{2}\rho^{-\frac{4}{q}}\left(  \int_{\mathbb{R}^{3}}\left\vert
\int_{\left\vert y^{\prime}\right\vert <1}e^{i\left[  \left(  \xi
_{1}+2\overline{y_{1}}\xi_{3}\right)  y_{1}^{\prime}+\left(  \xi
_{2}+2\overline{y_{2}}\xi_{3}\right)  y_{2}^{\prime}+\xi_{3}\left\vert
y^{\prime}\right\vert ^{2}\right]  }f\left(  \rho\left(  \overline
{y}+y^{\prime}\right)  \right)  dy^{\prime}\right\vert ^{q}d\xi^{\prime}%
d\xi_{3}\right)  ^{\frac{1}{q}}.
\end{align*}
If $\rho$ is a\ dyadic number $2^{n}$ for $n\in\mathbb{Z}$, and if $f$ is
replaced by a projection $\mathsf{Q}_{s}f=\sum_{I\in\mathcal{G}_{\lambda
}\left[  U\right]  }\left\langle f,\varphi_{I}\right\rangle \varphi_{I}$
above, then we conclude that%
\begin{equation}
\left\Vert \operatorname*{Int}_{\rho}\right\Vert _{L^{q}\left(  \mathbb{R}%
^{3}\right)  }\leq C\rho^{2}\rho^{-\frac{4}{q}}\sup_{\mathcal{G}%
\in\operatorname*{Grid}}\sup_{\lambda\in\mathbb{N}}\sup_{\mathbf{u}%
\in\mathcal{V}}\sup_{\left\Vert f\right\Vert _{L^{\infty}\left(
\sigma\right)  }\leq1}\left(  \int_{\mathbb{R}^{3}}\left\vert \sum
_{I\in\mathcal{G}_{\lambda}\left[  U\right]  }T_{I,\mathbf{u}}f\left(
\xi\right)  \right\vert ^{q}d\xi\right)  ^{\frac{1}{q}}. \label{Int est}%
\end{equation}
One can also apply this same change of variable to a Fourier square function
$\mathcal{S}_{\operatorname*{Fourier}}^{\lambda}f$ with $2^{-\lambda}<\rho$ to
obtain the following Fourier square function analogue of (\ref{Int est}),%
\begin{equation}
\left\Vert \mathcal{S}_{\operatorname*{Fourier}}^{s}f\right\Vert
_{L^{q}\left(  \mathbb{R}^{3}\right)  }\leq C\rho^{2}\rho^{-\frac{4}{q}}%
A_{s}^{\left(  q\right)  }\ , \label{sq anal}%
\end{equation}
where we recall that $A_{s}^{\left(  q\right)  }$ is defined in (\ref{Q_R}),
i.e.%
\[
A_{s}^{\left(  q\right)  }\equiv\sup_{\mathcal{G}\in\operatorname*{Grid}}%
\sup_{\mathbf{u}\in\mathcal{V}}\sup_{\left\Vert f\right\Vert _{L^{\infty
}\left(  U\right)  }\leq1}\left(  \int_{\mathbb{R}^{3}}\left\vert
\mathcal{S}_{\operatorname*{Fourier}}^{s,\mathbf{u}}f\left(  \xi\right)
\right\vert ^{q}d\xi\right)  ^{\frac{1}{q}},
\]
(where we have reinstated the sequence $\mathbf{u}$ in the definition of
$\operatorname*{Int}_{\rho}$, and in the notation for the definition of
$A_{s}^{\left(  q\right)  }$), since%
\begin{align*}
&  \text{the }L^{\infty}\text{ norm of }f\text{ is unchanged by dilation,}\\
&  \text{the paraboloid is invariant under parabolic rescaling,}\\
&  \text{the parabolic dilate }\delta_{\rho}\text{ of the measure }\Phi_{\ast
}\varphi_{I}\text{ is }\Phi_{\ast}\varphi_{\delta_{\rho}I}\text{,}\\
&  \text{and the modulations }\mathsf{M}_{\mathbf{u}}^{s}\text{ are invariant
under (any) linear rescaling.}%
\end{align*}
\newline Note that this requires that the parabolic dilation of size $\frac
{1}{\rho}$ takes the support of $\Phi_{\ast}f$ into $\Phi\left(  U\right)  $.
This can always be arranged even for parabolic dilations relative to other
points on the paraboloid (such as points $\Phi\left(  x\right)  $ with $x$
near $\partial U$), by first applying a parabolic dilation of size $\frac
{1}{2}$ (i.e. $\rho=2$) relative to the \emph{origin} to $\Phi_{\ast}f$ (which
is harmless), so that its support is contained in $\Phi\left(  \frac{1}%
{2}U\right)  $, and then applying a parabolic dilation relative to a point $P$
in the support of $f$, which will keep the support of the dilated measure in
$\Phi\left(  U\right)  $.

Note also that the factor $\rho^{2}$ arises from $\left\vert y^{\prime
}\right\vert <\rho$, and that the factor $\rho^{-\frac{4}{q}}A_{s}^{\left(
q\right)  }$ arises from parabolic rescaling. These features remain in play
for an arbitrary quadratic surface of positive Gaussian curvature, and so we
could have used any of these surfaces in place of the paraboloid
$\mathbb{P}^{2}$ in our arguments here.

Thus using (\ref{sq anal}), first with $\rho=2^{-\lambda^{\prime}}$ and then
with $\rho=2^{-\lambda}$, together with (\ref{contrib Case 2}), we obtain that
the contribution $\left\Vert \mathbf{1}_{\Gamma_{\lambda}\left(
\text{\textbf{Case 2}}\right)  }\mathcal{S}_{\operatorname*{Fourier}}%
^{s}f\right\Vert _{L^{q}\left(  \mathbb{R}^{3}\right)  }^{q}$ to the norm
$\left\Vert \mathcal{S}_{\operatorname*{square}}^{s}f\right\Vert
_{L^{q}\left(  \mathbb{R}^{3}\right)  }$ satisfies:
\begin{align}
&  \left\Vert \mathbf{1}_{\Gamma_{\lambda}\left(  \text{\textbf{Case 2}%
}\right)  }\mathcal{S}_{\operatorname*{Fourier}}^{s}f\right\Vert
_{L^{q}\left(  \mathbb{R}^{3}\right)  }\leq C\left(  \#\mathcal{G}%
_{\lambda^{\prime}}\left[  U\right]  \right)  ^{\frac{1}{q}}\left(
2^{-\lambda^{\prime}}\right)  ^{2}\left(  2^{-\lambda^{\prime}}\right)
^{-\frac{4}{q}}A_{s}^{\left(  q\right)  }\label{case 2 est}\\
&  +C\left(  \#\mathcal{G}_{\lambda}\left[  U\right]  \right)  ^{\frac{1}{q}%
}2^{-\frac{3}{q}\lambda}\left(  2^{-\lambda}\right)  ^{2}\left(  2^{-\lambda
}\right)  ^{-\frac{4}{q}}A_{s}^{\left(  q\right)  }\nonumber\\
&  =C2^{\left(  \frac{3}{q}-1\right)  \lambda}A_{s}^{\left(  q\right)
}+C2^{\left(  \frac{3}{q}-2\right)  \lambda}A_{s}^{\left(  q\right)
}\ ,\nonumber
\end{align}
since $\lambda^{\prime}=\frac{1}{2}\lambda-10$ and%
\[
\left(  \#\mathcal{G}_{\frac{1}{2}\lambda-10}\left[  U\right]  \right)
^{\frac{1}{q}}\left(  2^{10-\frac{1}{2}\lambda}\right)  ^{2}\left(
2^{-\frac{1}{2}\lambda}\right)  ^{-\frac{4}{q}}=2^{\frac{1}{q}\lambda
}2^{-\lambda}2^{\frac{2}{q}\lambda}=2^{\left(  \frac{3}{q}-1\right)  \lambda
},
\]
and%
\begin{align*}
&  2^{-\frac{3}{q}\lambda}\left(  \#\mathcal{G}_{\lambda}\left[  U\right]
\right)  ^{\frac{1}{q}}\left(  2^{-\lambda}\right)  ^{2}\left(  2^{-\lambda
}\right)  ^{-\frac{4}{q}}A_{s}^{\left(  q\right)  }\\
&  =2^{-\frac{3}{q}\lambda}2^{\frac{2}{q}\lambda}2^{-2\lambda}2^{\frac{4}%
{q}\lambda}A_{s}^{\left(  q\right)  }=2^{\left(  \frac{3}{q}-2\right)
\lambda}A_{s}^{\left(  q\right)  }\ .
\end{align*}

\end{proof}

\subsubsection{Case 3: Dipole interaction\label{Sub Case 3}}

\begin{proof}
[Proof continued]In \textbf{Case 3} we assume the negation of both
\textbf{Case 1} and \textbf{Case 2}. The failure of clustered interaction in
\textbf{Case 2} implies that there exists $I_{\ast\ast}^{a,s}$ with
$\omega_{I_{\ast\ast}^{a,s}}>2^{-2\lambda}w_{\ast}^{a,s}$ and $\left\vert
c_{I_{\ast\ast}^{a,s}}-c_{I_{\ast}^{a,s}}\right\vert >2^{10-\lambda}$, i.e.%
\begin{align*}
\int_{\mathbb{R}^{3}}\left\vert \mathcal{S}^{s}T_{I_{\ast\ast}^{a,s}}f\left(
z\right)  \right\vert \zeta_{\lambda}\left(  z-a\right)  dz  &  =\omega
_{I_{\ast\ast}^{a,s}}^{a,s}>2^{-2\lambda}\omega_{\ast}^{a,s}=2^{-2\lambda}%
\int_{\mathbb{R}^{3}}\left\vert \mathcal{S}^{s}T_{I_{\ast}^{a}}f\left(
z\right)  \right\vert \zeta_{\lambda}\left(  z-a\right)  dz\ ,\\
&  \operatorname*{dist}\left(  I_{\ast\ast}^{a,s},I_{\ast}^{a,s}\right)
>2^{10-\lambda}\ ,
\end{align*}
The simultaneous failure of separated interaction in \textbf{Case 1} further
implies that%
\begin{align}
\omega_{I}^{a,s}  &  \leq2^{-2\lambda}\omega_{\ast}^{a,s}\text{ if
}\operatorname*{dist}\left(  c_{I},I_{\ast}^{a,s}\cup I_{\ast\ast}%
^{a,s}\right)  >2^{10}2^{-\lambda},\label{simul}\\
&  \fbox{$%
\begin{array}
[c]{cccccccc}
&  &  &  &  &  &  & \mathbf{c}_{I}\\
& \mathbf{c}_{I_{\ast\ast}^{a,s}} &  &  &  &  &  & \\
&  & \nwarrow &  &  &  &  & \\
&  &  & 2^{10-\lambda} &  &  &  & \\
&  &  &  & \searrow &  &  & \\
&  &  &  &  & \mathbf{c}_{I_{\ast}^{a,s}} &  & \\
&  &  &  &  &  &  &
\end{array}
$}\nonumber
\end{align}

In this case we will again use parabolic rescaling since the squares $I$ with
near maximal weight, i.e. $2^{-2\lambda}\omega_{\ast}^{a,s}$, are clustered
within distance $2^{10-\lambda}$ of the squares $I_{\ast}^{a,s}$ and
$I_{\ast\ast}^{a,s}$. Indeed, arguing as in (\ref{contrib Case 2}) and
(\ref{case 2 est}) above, we then have%
\begin{align*}
&  \left\Vert \mathbf{1}_{\Gamma_{\lambda}\left(  \text{\textbf{Case 3}%
}\right)  }\mathcal{S}_{\operatorname*{Fourier}}^{s}f\right\Vert
_{L^{q}\left(  \mathbb{R}^{3}\right)  }\lesssim\left\Vert \sum
_{I:\ \operatorname*{dist}\left(  c_{I},I_{\ast}^{a,s}\cup I_{\ast\ast}%
^{a,s}\right)  >2^{10}2^{-\lambda}}\mathcal{S}_{\operatorname*{Fourier}}%
^{s}T_{I}f\right\Vert _{L^{q}\left(  \mathbb{R}^{3}\right)  }\\
&  \lesssim\left\Vert \sum_{I:\ \operatorname*{dist}\left(  c_{I},I_{\ast
}^{a,s}\right)  \leq2^{10}2^{-\lambda}}\mathcal{S}_{\operatorname*{Fourier}%
}^{s}f_{I}\right\Vert _{L^{q}\left(  \mathbb{R}^{3}\right)  }+\left\Vert
\sum_{I:\ \operatorname*{dist}\left(  c_{I},I_{\ast\ast}^{a,s}\right)
\leq2^{10}2^{-\lambda}}\mathcal{S}_{\operatorname*{Fourier}}^{s}%
T_{I}f\right\Vert _{L^{q}\left(  \mathbb{R}^{3}\right)  }\\
&  \lesssim\left(  2^{-\lambda}\right)  ^{2-\frac{4}{q}}A_{s}^{\left(
q\right)  }\ .
\end{align*}
since the first integral is dominated by%
\[
\#\mathcal{G}_{\lambda}\left[  U\right]  2^{-2\lambda}\omega_{\ast}%
^{a,s}\lesssim\omega_{I_{\ast}^{a}}^{a,s}=\left\Vert \mathcal{S}%
_{\operatorname*{Fourier}}^{s}T_{I_{\ast}^{a}}f\right\Vert _{L^{q}\left(
\mathbb{R}^{3}\right)  }\leq\left(  2^{-\lambda}\right)  ^{2-\frac{4}{q}}%
A_{s}^{\left(  q\right)  },
\]
and the next two integrals are each dominated by%
\[
C\left(  2^{10}2^{-\lambda}\right)  ^{2-\frac{4}{q}}A_{s}^{\left(  q\right)
}.
\]

\end{proof}

\subsection{Completing the proof of the theorem}

\begin{proof}
[Proof continued]So far we have shown that
\begin{align*}
\left\Vert \mathcal{S}_{\operatorname*{Fourier}}^{s}f\right\Vert
_{L^{q}\left(  \mathbb{R}^{3}\right)  }  &  \leq\left\Vert \mathbf{1}%
_{\Gamma_{\lambda}\left(  \text{\textbf{Case 1}}\right)  }\mathcal{S}%
_{\operatorname*{Fourier}}^{s}f\right\Vert _{L^{q}\left(  \mathbb{R}%
^{3}\right)  }+\left\Vert \mathbf{1}_{\Gamma_{\lambda}\left(
\text{\textbf{Case 2}}\right)  }\mathcal{S}_{\operatorname*{Fourier}}%
^{s}f\right\Vert _{L^{q}\left(  \mathbb{R}^{3}\right)  }+\left\Vert
\mathbf{1}_{\Gamma_{\lambda}\left(  \text{\textbf{Case 3}}\right)
}\mathcal{S}_{\operatorname*{Fourier}}^{s}f\right\Vert _{L^{q}\left(
\mathbb{R}^{3}\right)  }\\
&  \lesssim C_{\nu}2^{\left(  3+\frac{15}{q}\right)  \lambda}2^{s\frac
{\varepsilon}{3}}+C2^{\left(  \frac{3}{q}-1\right)  \lambda}A_{s}^{\left(
q\right)  }+C2^{\left(  \frac{3}{q}-2\right)  \lambda}A_{s}^{\left(  q\right)
}+\left(  2^{-\lambda}\right)  ^{2-\frac{4}{q}}A_{s}^{\left(  q\right)  }\\
&  \lesssim C_{\nu}2^{\left(  3+\frac{15}{q}\right)  \lambda}2^{s\frac
{\varepsilon}{3}}+\left[  2^{-\frac{1}{q}\left(  q-3\right)  \lambda
}+2^{-\frac{2}{q}\left(  q-\frac{3}{2}\right)  \lambda}+2^{-\left(  2-\frac
{4}{q}\right)  \lambda}\right]  A_{s}^{\left(  q\right)  }\lesssim2^{\left(
3+\frac{15}{q}\right)  \lambda}2^{s\frac{\varepsilon}{3}}+\frac{1}{2}%
A_{s}^{\left(  q\right)  }\ ,
\end{align*}
if $3<q<4$, and $\lambda$ is sufficiently large, namely $2^{10}2^{-\lambda
}\leq\nu_{q}$ from (\ref{q control}), and%
\begin{equation}
2^{-\left(  q-3\right)  \frac{1}{q}\lambda}+2^{-\left(  q-\frac{3}{2}\right)
\frac{2}{q}\lambda}+2^{-\left(  2-\frac{4}{q}\right)  \lambda}<\frac{1}{2}.
\label{namely}%
\end{equation}
Thus provided (\ref{q control}) and (\ref{namely}) both hold, we obtain%
\[
A_{s}^{\left(  q\right)  }\approx\sup_{s}\sup_{\left\Vert f\right\Vert
_{L^{\infty}}\leq1}\left\Vert \mathcal{S}_{\operatorname*{Fourier}}%
^{s}f\right\Vert _{L^{q}\left(  \mathbb{R}^{3}\right)  }\lesssim2^{\left(
3+\frac{15}{q}\right)  \lambda}2^{s\frac{\varepsilon}{3}}+\frac{1}{2}%
A_{s}^{\left(  q\right)  },
\]
and absorption now yields the inequality,%
\[
A_{s}^{\left(  q\right)  }\lesssim C_{\nu}2^{\left(  3+\frac{15}{q}\right)
\lambda}2^{s\frac{\varepsilon}{3}},\ \ \ \ \ \text{for all }s\in
\mathbb{N}\text{ with }2^{-s}\leq\nu_{q},
\]
This completes the proof of Theorem \ref{Loc lin} with $C_{q}\approx C_{\nu
}2^{\left(  3+\frac{15}{q}\right)  \lambda}$ and $\lambda$ chosen so that both
(\ref{q control}) and (\ref{namely}) hold.
\end{proof}

\begin{remark}
\label{param}The inequality (\ref{namely}) holds if
\[
\left(  1-\frac{3}{q}\right)  \lambda>3\text{ and }\left(  1-\frac{2}%
{q}\right)  \lambda>1\text{, in particular if }\lambda>\frac{3q}{q-3}.
\]
Thus altogether we need to take%
\[
\lambda>\max\left\{  \frac{3q}{q-3},\log_{2}\frac{2^{10}}{\nu_{q}}\right\}  .
\]

\end{remark}

\section{Concluding remarks}

In Theorem \ref{big}, we need not test the trilinear Kakeya inequality
(\ref{tri Kak dual}) over \emph{all} triples $\left(  \mathbb{T}%
_{1},\mathbb{T}_{2},\mathbb{T}_{3}\right)  $ satisfying the $\nu$-disjoint
condition (\ref{nu disjoint}), but only over the corresponding triples of
squares $\left(  U_{1},U_{2},U_{3}\right)  $ satisfying the $\nu$-disjoint
condition (\ref{nu disjoint'}), in which $\ell\left(  U_{1}\right)  \approx1$
and $U_{1}$, $U_{2}$ and $U_{3}$ are roughly lined up in a line, at least in
the case when $U_{1}$ is centered at the origin. Otherwise, the triples
$\left(  U_{1},U_{2},U_{3}\right)  $ in question are such that their sets of
unit normals to $\Phi\left(  U_{1}\right)  ,\Phi\left(  U_{2}\right)
,\Phi\left(  U_{3}\right)  $ are $\nu$-disjoint on $\mathbb{S}^{2}$. Here the
squares $U_{k}$ correspond to the families $\mathbb{T}_{k}$ in the sense that
the orientations of the tubes in $\mathbb{T}_{k}$ lie in the spherical patches
$\Phi\left(  U_{k}\right)  $.

Indeed, suppose for convenience that $U_{1}$ is centered at the origin. Then
given any large positive constant $\Lambda$, it is a simple exercise to first
reduce matters to triples $\left(  U_{1},U_{2},U_{3}\right)  $ of squares
inside $U$ satisfying%
\begin{align}
\ell\left(  U_{2}\right)   &  =\ell\left(  U_{3}\right)  \leq\ell\left(
U_{1}\right)  ,\label{nu disjoint''}\\
\frac{\Lambda}{2}  &  \leq\frac{\operatorname*{dist}\left(  U_{1}%
,U_{2}\right)  }{\ell\left(  U_{1}\right)  },\frac{\operatorname*{dist}\left(
U_{1},U_{3}\right)  }{\ell\left(  U_{1}\right)  },\frac{\operatorname*{dist}%
\left(  U_{2},U_{3}\right)  }{\ell\left(  U_{2}\right)  }\leq2\Lambda
,\nonumber\\
\nu &  \leq\operatorname*{dist}\left(  U_{2},U_{3}\right)  .\nonumber
\end{align}

Then using the parabolic rescaling of Tao, Vargas and Vega \cite{TaVaVe} as in
(\ref{Int est}) above, we may restrict the triples $\left(  U_{1},U_{2}%
,U_{3}\right)  $ in (\ref{nu disjoint''}) to those also satisfying%
\[
\frac{1}{32}\leq\ell\left(  U_{1}\right)  \leq\frac{1}{8},
\]
since rescaling $U_{1}$ to up a larger set only increases the left hand side
of (\ref{single tri Four}). As was done earlier, a rescaling down by a factor
of $\frac{1}{10}$ at the origin, repositions the triple so that a rescaling up
by a factor approximately $\frac{1}{\ell\left(  U_{1}\right)  }$ from the
center of $U_{1}$, keeps the triple within $U$.

Using the transverse trilinear theorem of Bennett, Carbery and Tao
\cite[Theorem 1.15]{BeCaTa}, and continuing to suppose that $U_{1}$ is
centered at the origin, we may further restrict the triples $\left(
U_{1},U_{2},U_{3}\right)  $ in (\ref{nu disjoint''}) to those satisfying%
\[
U_{1}\cap2\mathbb{K}\left(  U_{2},U_{3}\right)  \mathbb{\neq\emptyset},
\]
where $\mathbb{K}\left(  U_{2},U_{3}\right)  $ is the thinnest cone centered
at $\frac{c_{U_{2}}+c_{U_{3}}}{2}$ that contains both $U_{2}$ and $U_{3}$.
This is because any $\nu$\emph{-disjoint} triple $\left(  U_{1},U_{2}%
,U_{3}\right)  $, in which $2\mathbb{K}$ is disjoint from $U_{1}$, is $\nu
$\emph{-transverse} in the sense of \cite{BeCaTa}, and so their inequality applies.

The resulting $\nu$-disjoint configuration of squares over which we must test,
with $\Lambda\approx3$, resembles,%

\[
\left\{
\begin{array}
[c]{cccccccc}%
\fbox{$%
\begin{array}
[c]{cc}
& \\
&
\end{array}
$}U_{1} &  &  &  &  &  &  & \\
&  &  &  &  &  &  & \\
&  &  &  &  &  &  & \\
&  &  &  &  &  &  & \\
&  &  &  &  &  &  & \\
&  &  &  &  & \fbox{}U_{2} &  & \\
&  &  &  &  &  & \cdot & \\
&  &  &  &  &  &  & \fbox{}U_{3}%
\end{array}
\right\}  ,
\]
where the dot $\cdot$ represents the point $\frac{c_{U_{2}}+c_{U_{3}}}{2}$
midway between the centers of $U_{2}$ and $U_{3}$, and $U_{1}$ is centered at
the origin.

Note that we need \emph{not} test over configurations of the form,%
\[
\left\{
\begin{array}
[c]{cccccccc}%
\fbox{$%
\begin{array}
[c]{cc}
& \\
&
\end{array}
$}U_{1} &  &  &  &  &  &  & \\
&  &  &  &  &  &  & \\
&  &  &  &  &  &  & \\
&  &  &  &  &  &  & \\
&  &  &  &  &  &  & \\
&  &  &  &  &  &  & \fbox{}U_{2}\\
&  &  &  &  &  & \cdot & \\
&  &  &  &  & \fbox{}U_{3} &  &
\end{array}
\right\}  ,
\]
which are $\nu$-transverse, since any triangle with one vertex in each square
has area bounded below by $c\nu$.

\end{document}